\numberwithin{equation}{section}
\DeclareMathOperator{\tr}{Tr}
\DeclareMathOperator{\ts}{tr}
\DeclareMathOperator{\id}{id}
\DeclareMathOperator{\nb}{Nb}
\let\limsup\relax
\DeclareMathOperator*{\limsup}{limsup}
\let\liminf\relax
\DeclareMathOperator*{\liminf}{liminf}
\newcommand{\norm}[1]{\left\Vert #1\right\Vert}
\newtheorem{theorem}{Theorem} [section]
\newtheorem{prop}[theorem]{Proposition} 
\newtheorem{defi}[theorem]{Definition} 
\newtheorem{lemma}[theorem]{Lemma} 
\newtheorem{rem}[theorem]{Remark} 
\newtheorem{cor}[theorem]{Corollary} 
\newtheorem{assum}[theorem]{Assumption}
\renewcommand\P{\mathbb{P}}
\newcommand\E{\mathbb{E}}
\newcommand\N{\mathbb{N}}
\newcommand\1{\mathbf{1}}
\newcommand\C{\mathbb{C}}
\newcommand\CC{\mathcal{C}}
\newcommand\M{\mathbb{M}}
\newcommand\R{\mathbb{R}}
\newcommand\A{\mathcal{A}}
\newcommand\D{\mathcal{D}}
\newcommand\MM{\mathcal{M}}
\renewcommand\i{\mathbf{i}}
\newcommand{\deq}{\mathrel{\mathop:}=}
\begin{document}

 \begin{minipage}{0.85\textwidth}
 	\vspace{2.5cm}
 \end{minipage}
 \begin{center}
 	\large\bf The free energy of matrix models
 	
 \end{center}

 \renewcommand{\thefootnote}{\fnsymbol{footnote}}	
 \vspace{0.8cm}
 
 \begin{center}
 	\begin{minipage}{1.5\textwidth}
 		
 		\begin{minipage}{0.3\textwidth}
 			\begin{center}
 				F\'elix Parraud\\
 				\footnotesize 
 				{KTH Royal Institute of Technology,}\\
 				{Stockholm, Sweden.} \\
 				{\it parraud@kth.se}
 			\end{center}
 		\end{minipage}
 		\begin{minipage}{0.3\textwidth}
 			\begin{center}
 				Kevin Schnelli\\
 				\footnotesize 
 				{KTH Royal Institute of Technology,}\\
  				{Stockholm, Sweden.} \\
 				{\it schnelli@kth.se}
 			\end{center}
 		\end{minipage}
 	\end{minipage}
 \end{center}
 
 \bigskip
\bigskip


%
%
%
%

E-mail of the corresponding author: felix.parraud@gmail.com

The authors have no relevant financial or non-financial interests to disclose.

MSC 2020 : 46L54, 60B20, 05C30.

\begin{abstract}

	In this paper we study multi-matrix models whose potentials are perturbations of the quadratic potential associated with independent GUE random matrices. More precisely, we compute the free energy and the expectation of the trace of polynomials evaluated in those matrices. We prove an asymptotic expansion in the inverse of the matrix dimension to any order. Out of this result we deduce new formulas for map enumerations and the microstates free entropy. Our approach is based on the interpolation method between random matrices and free operators developed in \cite{un,trois}. \\
	
	\noindent \emph{Keywords}: Random Matrices, Free probability, Map enumeration, Free entropy
	
\end{abstract}

\section{Introduction}

The interest in multi-matrix models has a long history, starting with a result of Harer and Zagier \cite{harerzag} where they used the large dimension expansion of the moments of Gaussian random matrices to compute the Euler characteristic of the moduli space of curves. A good introduction to this topic is given in the survey \cite{zvonski} by  Zvonkin. In physics,  the seminal works of t'Hooft \cite{T_ouf} and Br\'ezin, Parisi, Itzykson and Zuber \cite{parisi} related matrix models with the enumeration of maps of any genus, hence providing a purely analytical tool to solve these hard combinatorial problems. The idea is that one can view the free energy of matrix models of dimension $N$ as a formal power series in $N^{-1}$ whose coefficients are generating functions of maps on a surface of a given genus.

During the last two decades, the study of matrix models has been quite active, in \cite{guionnet-segala} and \cite{gui-seg-2} Guionnet and Maurel-Segala studied the first and then the second order of the asymptotic before giving a full expansion in \cite{segala}. More recently in \cite{conftr} they also studied the case of matrix models whose law is far from the quadratic potential. Besides, the unitary equivalents of these matrix models also have a long history starting with the Harish-Chandra-Itzykson-Zuber model, \cite{masd,masd2,masd3,masd4}, which has since then been extended to more general potentials, \cite{segalaU1,novakguio,figalli2,nouv}.

The multi-matrix models were originally introduced as a means to study matrix integrals, i.e. integrals of the following form,
$$ I_N(V) = \int \exp\left(-N\tr_N\left(V(X_1^N,\dots,X_d^N)\right)\right)\ dX_1^N\dots dX^N_d,$$
where the integral is with respect to the Lebesgue measure on the space of Hermitian matrices of size $N$. Those integrals are known to be difficult to estimate and even more so to compute. Indeed the large number of parameters combined with the non-linearity of the exponential makes it impossible to get explicit formulas and one has to find alternative strategies in order to compute approximations of this integral. In order to tackle this problem we focus on studying the case where $V$ is a self-adjoint perturbation of the quadratic potential. Then the strategy consists in introducing the matrix model associated to this potential, the law of this random matrix ensemble is defined similarly to the one of the Gaussian Unitary Ensemble (GUE) but where we replaced the quadratic potential by a more general non-commutative polynomial. More precisely, we will study perturbations of the quadratic potential, that is random matrix ensembles whose law have the following form,
\begin{equation}
	\label{cmksmcdscs}
	d\mu_{\lambda V}^N(X^N) = \frac{1}{Z_{\lambda V}^N} e^{-N\tr_N\left(\lambda V(X^N)+\frac{1}{2}\sum_{i=1}^d(X_i^N)^2\right)} dX_1^N\cdots dX_d^N
\end{equation}
where $\lambda$ is small. Thus one expects the behavior of those to be close to the one of the unperturbed quadratic potential, i.e. the case of a $d$-tuple of independent GUE random matrices. Indeed, we show that one can find operators $\nabla_V$ and $L$ such that for any polynomial $P$,
$$ \mu_{\lambda V}^N\left[ \frac{1}{N}\tr_N\left( \left(\id + \lambda \nabla_V - \frac{L}{N^2} \right)(P)(X^N) \right) \right] = \tau(P(x)), $$
where $x$ is a $d$-tuple of free semicircular variables. Thus heuristically one has that
$$-\frac{1}{N^2}\frac{d}{d\lambda}\log I_N\left(\lambda V+\frac{1}{2}\sum_{i=1}^dX_i^2\right) = \tau\left( \left(\id + \lambda \nabla_V - \frac{L}{N^2} \right)^{-1}(V)(x) \right).$$
In practice, it is unclear whether the operator above is ever invertible when $\lambda\neq 0$. However we show that the formula above still holds for sufficiently small $\lambda\geq 0$ when you replace the inverse of the operator by a Taylor expansion with respect to $N^{-2}$. Thus under some technical assumptions on the potential $V$, one has the following theorem.

\begin{theorem}
	
	\label{maintherorem}
	
	Let the following objects be given,
	\begin{itemize}
			\item $P,V\in\C\langle X_1,\dots,X_d\rangle$ such that $V$ satisfies Assumption \ref{kdmcslcs0} below,
		\item $X^N$ a family of $d$ i.i.d. GUE matrices.
	\end{itemize}
	Then there exists a constant $c_V>0$ depending only on the potential $V$ such that for $\lambda\in [0,c_V]$, for any $k\in\N$, 
	\begin{equation}
		\label{kdjncoskd}
		\frac{\E\left[\frac{1}{N}\tr_N\left(P(X^N)\right) e^{-\lambda N\tr_N\left(V(X^N)\right)}\right]}{\E\left[ e^{-\lambda N\tr_N\left(V(X^N)\right)}\right]} = \sum_{0\leq n \leq k } \frac{\alpha_n^V(\lambda,P)}{N^{2n}} + \mathcal{O}\left(N^{-2(k+1)}\right).
	\end{equation}
	In particular, we have for the free energy that
	\begin{equation}
		\label{kdjncoskd2}
		\frac{1}{N^2}\log\left(\E\left[ e^{-\lambda N\tr_N\left(V(X^N)\right)}\right]\right) = - \sum_{0\leq n \leq k } \frac{1}{N^{2n}}\int_0^{\lambda} \alpha_n^V(\nu,V)\ d\nu + \mathcal{O}\left(N^{-2(k+1)}\right).
	\end{equation}
	The coefficients $\alpha_n^V$ are obtained as follows. With the notations of Theorem \ref{mainlemma}, we have the operators
	$$
	\begin{array}{ccccc}
		\nabla_V & : & \oplus_{H} \A_{d}^H & \to & \oplus_{H} \A_{d}^{\{H,G\}} \\
		& & \oplus_H P_H & \mapsto & \oplus_H \nabla_V^{H,T_H}\left(P_H\right) \\
	\end{array}
	\quad\quad\quad
	\begin{array}{ccccc}
		L & : & \oplus_{H} \A_{d}^H & \to & \oplus_{H} \A_{d}^{\{H,F\}} \\
		& & \oplus_H P_H & \mapsto & \oplus_H L^{H,T_H}\left(P_H\right) \\
	\end{array}.
	$$
	Then
	\begin{align}
		\label{smcskmdcskmcslk}
		&\alpha_n^V(\lambda,P) = \sum_{k_0,\dots,k_n\geq 0} (-\lambda)^{k_0+\dots+k_n} \int\limits_{A_{k_0,\dots,k_n}} \tau\left(\nabla_V^{k_n}\circ L \circ \nabla_V^{k_{n-1}} \dots\circ L\circ \nabla_V^{k_0}(P)\left(x^{T_{2n+k_{n}+\dots+k_0}}\right)\right) \\
		&\quad\quad\quad\quad\quad\quad\quad\quad\quad\quad\quad\quad\quad\quad\quad\quad\quad\quad\quad\quad\quad\quad\quad\quad\quad\quad\quad\quad\quad\quad\quad\quad dt_1\cdots dt_{2n+k_0+\dots+k_n}, \nonumber
	\end{align}
	with the following integration domain:
	\begin{align}
		\label{cmskcokmcvps}
		&A_{k_0,\dots,k_n} = \Bigg\{ t_1,\dots,t_{2n+k_0+\dots+k_n}\in\R_+ \Bigg|\ 
		\forall i\in E_n, t_i\leq t_{i-1}, \\
		&\quad\quad\quad\quad\quad\quad\quad\quad\quad\quad\quad\quad\quad \forall i,j\in [1,2n+k_0+\dots+k_n]\setminus E_n,\text{ if } i< j,\text{ then }  t_i\leq t_j, \Bigg\}, \nonumber
	\end{align}
	where $E_n = \cup_{r=0}^{n-1}\{k_0+\dots+k_r+2r+2\}$.
\end{theorem}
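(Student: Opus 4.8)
The strategy is to reduce everything to an exact finite-$N$ identity — the \emph{master equation} of Theorem~\ref{mainlemma} — and then to extract \eqref{kdjncoskd} from it by a controlled iteration, after which \eqref{kdjncoskd2} follows by integration in $\lambda$. Throughout write $\phi(P)\deq\mu_{\lambda V}^N\!\left[\tfrac1N\tr_N(P(X^N))\right]$, so that the left-hand side of \eqref{kdjncoskd} is exactly $\phi(P)$.

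First I would derive the master equation $\phi\big((\id+\lambda\nabla_V-N^{-2}L)(P)\big)=\tau(P(x))$ by the interpolation method of \cite{un,trois}. One interpolates between the GUE family $X^N$ and a free semicircular family $x$; differentiating the interpolating functional and integrating by parts on both sides — Gaussian (Stein) integration by parts on the matrix side, the free Schwinger--Dyson relation on the free side — produces two discrepancies. The perturbation $\lambda V$ in the density \eqref{cmksmcdscs} contributes a term involving the cyclic derivative of $V$, packaged into the operator $\nabla_V$; and the matrix-side integration by parts produces a term in which $\partial_iP$ is paired against $\tfrac1N\tr_N\otimes\tfrac1N\tr_N$, whose difference with $\tau\otimes\tau$ is $\mathcal O(N^{-2})$ and is packaged into the operator $L$. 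Each application of $\nabla_V$ or $L$ introduces a new "time" parameter together with an integration over a simplex of such times; this is the reason the operators are defined on the graded spaces $\oplus_H\A_d^H$ as in Theorem~\ref{mainlemma}.

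Next I would iterate this identity. Since $\nabla_V$ and $L$ are bounded on $\oplus_H\A_d^H$ with norms controlled by a constant depending only on $V$, there is $c_V>0$ such that for $\lambda\in[0,c_V]$ the series $(\id+\lambda\nabla_V)^{-1}=\sum_{k\ge0}(-\lambda)^k\nabla_V^k$ converges, and formally $(\id+\lambda\nabla_V-N^{-2}L)^{-1}=\sum_{n\ge0}N^{-2n}\big((\id+\lambda\nabla_V)^{-1}L\big)^n(\id+\lambda\nabla_V)^{-1}$. Since it is not clear that $\id+\lambda\nabla_V-N^{-2}L$ is genuinely invertible, rather than inverting it I would substitute the master equation back into itself a fixed number of times, at each step re-expressing $\phi(\nabla_V(\,\cdot\,))$ and $\phi(L(\,\cdot\,))$. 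Grouping the resulting words in $\{\nabla_V,L\}$ by their number $n$ of factors $L$ and by the sizes $k_0,\dots,k_n$ of the maximal blocks of consecutive $\nabla_V$'s delimited by the $L$'s gives
\[\phi(P)=\sum_{n=0}^{k}\frac{1}{N^{2n}}\sum_{k_0,\dots,k_n\ge0}(-\lambda)^{k_0+\dots+k_n}\,\tau\big(\nabla_V^{k_n}\circ L\circ\cdots\circ L\circ\nabla_V^{k_0}(P)(x)\big)+\mathcal R_{k,N},\]
where the remainder $\mathcal R_{k,N}$ gathers the words still carrying a factor $\phi(\,\cdot\,)$: these contain either at least $k+1$ factors $L$, hence a prefactor $N^{-2(k+1)}$, or at least $M$ factors $\nabla_V$ for $M$ large, hence a prefactor $\lambda^M$ that is summable for $\lambda\le c_V$. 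Using $|\phi(Q)|\le\E_{\mu_{\lambda V}^N}[\|Q(X^N)\|]$ together with the uniform-in-$N$ bound on $\|X^N\|$ under $\mu_{\lambda V}^N$ valid for $\lambda\le c_V$ (the perturbed model still having essentially bounded support), one checks $\mathcal R_{k,N}=\mathcal O(N^{-2(k+1)})$, which gives \eqref{kdjncoskd} with $\alpha_n^V(\lambda,P)$ equal to the inner double sum. Finally, unwinding $\nabla_V=\oplus_H\nabla_V^{H,T_H}$ and $L=\oplus_HL^{H,T_H}$ as integrals over simplices of times, the composition above becomes an iterated integral whose domain, once one records which inequalities are reversed at the positions of the $L$'s — the set $E_n$ — is exactly $A_{k_0,\dots,k_n}$, so that $\alpha_n^V$ takes the form \eqref{smcskmdcskmcslk}.

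For \eqref{kdjncoskd2} I would differentiate $\tfrac1{N^2}\log\E\!\left[e^{-\lambda N\tr_N(V(X^N))}\right]$ in $\lambda$, obtaining $-\phi(V)$; applying \eqref{kdjncoskd} with $P=V$, integrating over $\nu\in[0,\lambda]$ (the error being uniform in $\nu\in[0,\lambda]$, hence integrable), and using that the expectation equals $1$ at $\lambda=0$ yields \eqref{kdjncoskd2}. I expect the main obstacle to be the interplay of the two expansions: the $N^{-2}$ expansion is only asymptotic, and since $\nabla_V$ and $L$ may raise the degree of the polynomial the iteration of the master equation does not terminate, so controlling the truncation remainder requires simultaneously the radius bound $\lambda\le c_V$ (taming the proliferation of $\nabla_V$) and the uniform-in-$N$ a priori control of $\mu_{\lambda V}^N[\tfrac1N\tr_N(Q)]$ for $Q$ of growing degree. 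Building the spaces $\A_d^H$ so that $\nabla_V$ and $L$ are genuinely bounded, and obtaining the master equation with an error that is exactly reabsorbed at each step rather than merely $o(1)$, is the technical core and is where the machinery of \cite{un,trois} is needed.
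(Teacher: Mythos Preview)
Your overall strategy matches the paper's: derive the master equation (Theorem~\ref{mainlemma}) by interpolation and Gaussian/free integration by parts, iterate it, group the resulting words by the number of $L$'s and the sizes of the $\nabla_V$-blocks, and obtain \eqref{kdjncoskd2} by differentiating in $\lambda$ and integrating back. The structure of the remainder and of $\alpha_n^V$ that you describe is correct.

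The gap is precisely at the step you flag as ``the technical core'', and the resolution is not the one you propose. You assert that $\nabla_V$ and $L$ are bounded on $\oplus_H\A_d^H$ so that $\sum_{k\ge 0}(-\lambda)^k\nabla_V^k$ converges; but $\nabla_V$ raises the degree by $\deg V-2$ and multiplies the number of monomials, so it is not bounded in any norm for which evaluation at operators of norm $\ge 1$ is continuous, and the paper does \emph{not} fix this by building special spaces. Instead it tracks explicitly how $\nabla_V^k$ affects degree, number of terms and coefficients (the computations around \eqref{sdckmsldc}), after which everything reduces to the integral estimate
\[
I_k=\int_{t_k\ge\cdots\ge t_1\ge 0}\ \sum_{(n_1,\dots,n_k)\in E_k} e^{-\sum_i(t_i-t_{n_i-1})}\,dt_1\cdots dt_k\ \le\ C^k
\]
(Lemma~\ref{sdlmcldsfmvs}), proved via the combinatorial reduction of Lemma~\ref{odvfkmssm} and the auxiliary one-matrix-model argument of Lemma~\ref{lkmdsocms}. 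The paper calls this ``one of the most difficult parts of the proof'', and your sketch does not mention it; without $I_k\le C^k$, neither the convergence of $\alpha_n^V(\lambda,P)$ as a $\lambda$-series nor the $\mathcal O(N^{-2(k+1)})$ remainder bound goes through. A secondary issue: you invoke a uniform-in-$N$ bound on $\|X^N\|$ under $\mu_{\lambda V}^N$, but Assumption~\ref{kdmcslcs0} only gives trace-moment bounds up to order $u_N\gg\log N$; the paper accordingly truncates each $\nabla_V$-block at an $N$-dependent level $K=\beta u_N$ so that all degrees stay below $u_N$, and then controls separately the tail $\sum_{k_0+\dots+k_l>K}$ of the full series for $\alpha_l^V$.
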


Unlike some past works on multi-matrix models \cite{guionnet-segala,gui-seg-2,segala}, the conditions required for our result (see Assumption \ref{kdmcslcs0}) are weaker and easier to verify than convexity of the potential $V$; in particular our results apply to the trapping potentials of \cite{conftr}. Similarly, if one adds a cut-off, i.e. we bound the norm of our random matrices by a constant $K$, then the free energy of this model has the same expansion as in the previous theorem but without the need for Assumption \ref{kdmcslcs0}.

\begin{theorem}
	
	\label{maintherorem2}
	
	Let the following objects be given,
	\begin{itemize}
		\item $P,V\in\C\langle X_1,\dots,X_d\rangle$ such that $X\in\M_N(\C)^d\mapsto\tr_N(V(X))$ is real-valued for any $N$,
		\item $X^N$ a family of $d$ i.i.d. GUE matrices.
	\end{itemize}
	Then, with $\alpha_n^V$ defined as Theorem \ref{maintherorem}, for $K$ sufficiently large, there exists a constant $c_{V,K}>0$ depending only on the potential $V$ and the cut-off $K$, such that for $\lambda\in [0,c_{V,K}]$, for any $k\in\N$, 
	\begin{equation}
		\label{kdjncoskd4}
		\frac{\E\left[\frac{1}{N}\tr_N\left(P(X^N)\right) e^{-\lambda N\tr_N\left(V(X^N)\right)}  \1_{\forall i, ||X_i^N||\leq K}\right]}{\E\left[ e^{-\lambda N\tr_N\left(V(X^N)\right)}  \1_{\forall i, ||X_i^N||\leq K}\right]} = \sum_{0\leq n \leq k } \frac{\alpha_n^V(\lambda,P)}{N^{2n}} + \mathcal{O}\left(N^{-2(k+1)}\right).
	\end{equation}
	In particular, we have for the free energy that
	\begin{equation}
		\label{kdjncoskd3}
		\frac{1}{N^2}\log\left(\E\left[ e^{-\lambda N\tr_N\left(V(X^N)\right)}  \1_{\forall i, ||X_i^N||\leq K}\right]\right) = - \sum_{0\leq n \leq k } \frac{1}{N^{2n}}\int_0^{\lambda} \alpha_n^V(\nu,V)\ d\nu + \mathcal{O}\left(N^{-2(k+1)}\right).
	\end{equation}

\end{theorem}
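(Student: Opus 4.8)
\medskip
\noindent\emph{Proof strategy for Theorem~\ref{maintherorem2}.} The plan is to rerun the interpolation argument behind Theorem~\ref{maintherorem}, with the cut-off $\1_{\forall i,\,\|X_i^N\|\leq K}$ taking over the role of Assumption~\ref{kdmcslcs0}. In the unbounded setting that assumption is used only to ensure that $\E[e^{-\lambda N\tr_N(V(X^N))}]$ is finite and that every trace of a noncommutative polynomial met along the interpolation has moments bounded uniformly in $N$; once the spectra of the $X_i^N$ are deterministically confined to $[-K,K]$ both facts become automatic, since any such trace is then bounded by an explicit constant depending only on $K$ and the degree. Because the combinatorics extracting the coefficients $\alpha_n^V$ are insensitive to the provenance of these a priori bounds, they produce exactly the coefficients \eqref{smcskmdcskmcslk}--\eqref{cmskcokmcvps}, which is why the expansion coincides with the one of Theorem~\ref{maintherorem}.

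Write $W\deq e^{-\lambda N\tr_N(V(X^N))}\1_{\forall i,\,\|X_i^N\|\leq K}$ and $\Phi(P)\deq\E[\tfrac1N\tr_N(P(X^N))\,W]/\E[W]$. First I would interpolate between the $d$-tuple of GUE matrices $X^N$ and the $d$-tuple of free semicircular elements $x$ along a free Brownian path, as in \cite{un,trois}, then differentiate the interpolated quantity in the time parameter and integrate by parts using the Gaussian structure. This produces the cut-off analogue of the master equation of the introduction,
\[
\Phi\big((\id+\lambda\nabla_V-N^{-2}L)(P)\big)=\tau\big(P(x)\big)+R_K(P),
\]
with $\nabla_V$ and $L$ the operators of Theorem~\ref{maintherorem} (the time variables they carry along the interpolation producing the integration domains \eqref{cmskcokmcvps}) and $R_K(P)$ an error coming from the non-smoothness of the indicator in $W$. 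Iterating this identity — expanding $(\id+\lambda\nabla_V)^{-1}$ as the Neumann series $\sum_{j\geq0}(-\lambda\nabla_V)^j$, which converges for $\lambda\leq c_{V,K}$ (this is where the smallness of $\lambda$ relative to $K$ enters), and keeping only the terms with at most $k$ occurrences of $N^{-2}L$ — yields $\Phi(P)=\sum_{0\leq n\leq k}N^{-2n}\alpha_n^V(\lambda,P)+\mathcal{O}(N^{-2(k+1)})+\sum_{j\geq0}(-\lambda)^jR_K(\nabla_V^jP)$, where the term $\mathcal{O}(N^{-2(k+1)})$ is genuinely of that order precisely because the cut-off makes $\Phi$ bounded on polynomials of bounded degree. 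This is \eqref{kdjncoskd4}; then, specializing $P=V$, using $\tfrac1{N^2}\tfrac{d}{d\lambda}\log\E[W]=-\Phi(V)$ and integrating in $\lambda$ from $0$ — the boundary value $\tfrac1{N^2}\log\P(\forall i,\,\|X_i^N\|\leq K)$ being super-polynomially small for $K$ large — gives \eqref{kdjncoskd3}.

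The main obstacle is showing that the error $\sum_{j\geq0}(-\lambda)^jR_K(\nabla_V^jP)$ is $\mathcal{O}(N^{-D})$ for every $D$. After regularizing $\1_{\forall i,\,\|X_i^N\|\leq K}$ into a smooth cut-off and estimating the difference separately, each $|R_K(Q)|$ is bounded by a norm of $Q$ times $\E[e^{-\lambda N\tr_N(V(X^N))}\1_{\exists i,\,\|X_i^N\|\in[K,K+1]}]/\E[W]$, so it suffices to show this thin-shell expectation is super-polynomially smaller than the partition function $\E[W]$. This is delicate: a top-eigenvalue large-deviation bound gives $\P(\exists i,\,\|X_i^N\|\geq K)\leq e^{-Nc(K)}$ with $c(K)\to\infty$ as $K\to\infty$, whereas on the cut-off region the weight $e^{-\lambda N\tr_N(V(X^N))}$ may be as large as $e^{\lambda N^2 B_K}$ for a constant $B_K$ depending on $V$ and $K$, so a naive bound fails for any fixed $\lambda>0$. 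I would resolve it by combining (i) a matching lower bound $\E[W]\geq e^{-\lambda N^2(\tau(V(x))+\varepsilon)}(1-e^{-cN})$, obtained by restricting the expectation to a neighbourhood of the semicircular configuration on which $\tfrac1N\tr_N(V(X^N))$ concentrates, with (ii) a large-deviation estimate for the largest eigenvalue under the probability measure with density proportional to $W$, showing that its rate function stays bounded away from the cut-off — which holds when $K$ is large and $\lambda\leq c_{V,K}$ is small enough that this measure remains a perturbation of the GUE law truncated to $\{\forall i,\,\|X_i^N\|\leq K\}$, so that no mass piles up near $\|X_i^N\|=K$. Together (i)--(ii) make the series $\sum_{j\geq0}(-\lambda)^jR_K(\nabla_V^jP)$ (itself convergent in $j$ for $\lambda\leq c_{V,K}$) super-polynomially small, and the constant $c_{V,K}$ together with the requirement ``$K$ sufficiently large'' is exactly what makes (i)--(ii) hold. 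Once this error is disposed of, the remaining estimates are identical to those in the proof of Theorem~\ref{maintherorem}.
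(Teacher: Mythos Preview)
Your overall strategy is correct and matches the paper: iterate the cut-off master equation (Theorem~\ref{mainlemma2}) in place of Theorem~\ref{mainlemma}, use the deterministic bound $\|X_i^N\|\leq K$ to replace Assumption~\ref{kdmcslcs0}, and control the accumulated boundary errors. The combinatorial estimates (Lemma~\ref{sdlmcldsfmvs}) and the identification of the coefficients $\alpha_n^V$ are indeed unchanged.

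Where you diverge from the paper is in the handling of the indicator, and your choice makes the argument harder than it needs to be. You propose to smooth the cut-off \emph{outward}, so that the error $R_K$ lives on the shell $\|X_i^N\|\in[K,K+1]$; this is why you then face the genuine difficulty you describe, namely that $e^{-\lambda N\tr_N(V)}$ on that shell is not controlled by the partition function $\E[W]$ (the $N^2$ in the exponent of the weight beats the $N$ in the large-deviation rate), forcing you into the somewhat vague two-step program (i)--(ii). The paper instead never leaves the cut-off region: Lemma~\ref{ippcut} derives the Schwinger--Dyson equation via a change of variables that is the identity near $\|X_i\|=K$ and is cut off by a smooth function supported \emph{inside} $[-K,K]$. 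The resulting error is already of the form $C\,\|Q\|_D\cdot\mu_{\lambda V,K}^N(\max_i\|X_i\|\geq M)$ for any $M<K$, i.e.\ a probability under the \emph{normalized} cut-off measure, so the $N^2$--vs--$N$ mismatch you worry about never arises. That probability is then bounded by $e^{-\alpha MN}$ via Lemma~\ref{vkmslmlskmvlsmv} (quoted from \cite{gui-seg-2}), which plays the role of your step~(ii) but is available off the shelf. Summed against the geometric series in $\lambda$, this gives an $\mathcal{O}(e^{-cN})$ contribution and the remaining $N^{-2(k+1)}$ term is bounded exactly as in the proof of Theorem~\ref{maintherorem}, with $D_N$ replaced by the deterministic $K+2$.

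In short: your outline is sound, but if you smooth inward rather than outward (or simply invoke Lemma~\ref{ippcut}), the ``main obstacle'' you identify disappears and step~(i) of your program becomes unnecessary.
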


The approach taken to prove these theorems is similar to the one used in \cite{un,deux,trois,sept}, yet it is the first time that it has been used to study matrix models. Notably, those papers have already established that GUE and Haar random matrices can be handled similarly by interpolating random matrices with free operators. Thus we expect that the strategy used to prove Theorems \ref{maintherorem} and \ref{maintherorem2} can also be used to study unitary matrix models. This method has the advantage that the constant $c_V$ does not depend on the order to which we push our Taylor expansion (i.e the number $k$ in Equations \eqref{kdjncoskd} and \eqref{kdjncoskd2}) as it is the case in \cite{segala} and \cite{novakguio}. Besides, the error term is fully explicit in every parameter which allows us to let them vary with $N$, although we do not state it in Theorem \ref{maintherorem} in order to keep it shorter.

There are several directions in which we can refine this theorem. Indeed, although we refrained from doing so in order to keep the paper shorter, it is possible to also include deterministic matrices in the potential $V$ and the polynomial $P$, and since our formulas are fully explicit, we can even consider deterministic matrices of small rank, which means that we can use this strategy to study other types of potentials which are not simply the trace of a polynomial. For example one can use identities such as $ \langle P(X)y| y \rangle = \tr_N(P(X)yy^*)$ to study scalar products. This is a potential that is of interest in the case of spherical integrals, see \cite{spe1} and \cite{spe2}. Note that the scalar product was already investigated in \cite{slefnalks}, albeit not in a potential. Another direction that we could investigate is the case of GOE random matrices, we outline how one can adapt our proofs to this case in the last section.

Theorem \ref{maintherorem} has the following corollaries. To begin with, as previously mentioned the coefficients of the expansion can be related to the generating function of maps on a surface of genus $g$, see Subsection \ref{ksjdcdnv} for definitions. More precisely thanks to Theorem 1.1 of \cite{segala}, one has the following corollary.

\begin{cor}
	\label{sdkjnvkdm}
	We write the potential as $V=\sum_{i=1}^{m}t_i q_i$ where $q_i$ are monomials. Then if $V$ is such that $X\in\M_N(\C)^d\mapsto\tr_N(V(X))$ is real-valued for any $N$, with $q_0$ a monomial, one has for $g\geq 0$ and $\lambda$ sufficiently small,
	$$ \alpha_g^V(\lambda,q_0) = \sum_{\mathbf{k}\in \N^m} \frac{(-\lambda \mathbf{t})^{\mathbf{k}}}{\mathbf{k}!} \mathcal{M}_g^{\mathbf{k}}(q_0),$$
	where $(-\lambda \mathbf{t})^{\mathbf{k}} = \prod_i (-\lambda t_i)^{k_i}$, $\mathbf{k}!= \prod k_i!$, and $\mathcal{M}_g^{\mathbf{k}}(q_0)$ is the number of maps on a surface of genus $g$ with $k_i$ vertices of type $q_i$ and one of type $q_0$, see Definition \ref{socdmsomc} and \ref{socdmsomc2}.
\end{cor}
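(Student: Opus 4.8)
The plan is to match the $N^{-2}$-expansion furnished by Theorem~\ref{maintherorem} with the topological (genus) expansion of the very same quantity, whose coefficients enumerate maps by the classical Wick/Feynman-diagram computation; the precise dictionary between the two is exactly the content of Theorem~1.1 of \cite{segala}. First I would bring the left-hand side of \eqref{kdjncoskd} back to the matrix model \eqref{cmksmcdscs}: since the GUE expectation is a normalized integral, for any polynomial $P$ one has
$$\frac{\E\!\left[\frac1N\tr_N(P(X^N))\,e^{-\lambda N\tr_N(V(X^N))}\right]}{\E\!\left[e^{-\lambda N\tr_N(V(X^N))}\right]}=\mu_{\lambda V}^N\!\left[\frac1N\tr_N(P)\right],$$
so, taking $P=q_0$, Theorem~\ref{maintherorem} gives for every $k\in\N$ and $\lambda\in[0,c_V]$ that $\mu_{\lambda V}^N[\frac1N\tr_N(q_0)]=\sum_{0\le n\le k}\alpha_n^V(\lambda,q_0)N^{-2n}+\mathcal{O}(N^{-2(k+1)})$. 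If $V$ is only assumed to make $\tr_N(V(\cdot))$ real-valued one argues identically from Theorem~\ref{maintherorem2}, the cut-off being harmless for small $\lambda$ by concentration of the spectral radius of a GUE matrix.

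Next I would invoke Theorem~1.1 of \cite{segala}, which in the same perturbative window provides a second asymptotic expansion in $N^{-2}$ of this normalized moment, namely
$$\mu_{\lambda V}^N\!\left[\frac1N\tr_N(q_0)\right]=\sum_{0\le n\le k}\frac1{N^{2n}}\Bigg(\sum_{\mathbf{k}\in\N^m}\frac{(-\lambda\mathbf{t})^{\mathbf{k}}}{\mathbf{k}!}\,\mathcal{M}_n^{\mathbf{k}}(q_0)\Bigg)+\mathcal{O}\!\left(N^{-2(k+1)}\right),$$
the genus-$n$ coefficient being the generating function of maps of genus $n$ with one marked vertex of type $q_0$ and, for each $i$, $k_i$ unmarked vertices of type $q_i$; here the weight $-\lambda t_i$ per $q_i$-vertex and the symmetry factor $1/k_i!$ arise from expanding $\exp(-\lambda N\tr_N(\sum_{i=1}^m t_iq_i))$, and the vacuum contributions cancel when passing to the ratio. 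The two displays exhibit two asymptotic expansions of one and the same sequence in powers of $N^{-2}$; by uniqueness of asymptotic expansions their coefficients agree order by order, which is precisely $\alpha_g^V(\lambda,q_0)=\sum_{\mathbf{k}\in\N^m}\frac{(-\lambda\mathbf{t})^{\mathbf{k}}}{\mathbf{k}!}\mathcal{M}_g^{\mathbf{k}}(q_0)$ for all $g\ge0$.

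The only genuine work is in the middle step, and it is essentially bookkeeping: one has to check that the hypotheses under which \cite{segala} produces the map expansion are met for the model and the range $\lambda\in[0,c_V]$ at hand, and that the combinatorial normalization there — orientation and labelling of half-edges, treatment of the marked $q_0$-vertex, and the vacuum subtraction implicit in the normalized moment — reproduces exactly the weights $(-\lambda\mathbf{t})^{\mathbf{k}}/\mathbf{k}!$ in the statement. To be safe about the reconciliation of assumptions, note that by \eqref{smcskmdcskmcslk} and the linearity of $\nabla_V$ in $V$ the quantity $\alpha_n^V(\lambda,q_0)$ is, for fixed $n$, a convergent power series in $(\lambda t_1,\dots,\lambda t_m)$ near the origin, as is the right-hand side of the claim; hence it suffices to prove the identity on an open set of couplings where both Theorem~\ref{maintherorem} (or \ref{maintherorem2}) and \cite{segala} apply, and then to compare Taylor coefficients, after which the identity holds for every admissible $V$. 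Everything else is the soft uniqueness argument.
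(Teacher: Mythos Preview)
Your high-level strategy is exactly the paper's: obtain two $N^{-2}$-expansions of the same normalized moment and identify coefficients. The difference is in what you call ``bookkeeping'' in the middle step, and this is where the real content lies.

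The gap is in invoking Theorem~1.1 of \cite{segala} as a black box. That result is proved under a convexity hypothesis on the potential, whereas the corollary assumes only that $\tr_N(V(\cdot))$ is real-valued. For a general collection of monomials $q_1,\dots,q_m$ (say, all of odd degree) there may be no choice of couplings making $\lambda V+\tfrac12\sum X_i^2$ globally convex, so the ``open set of couplings where both theorems apply'' that your analyticity argument presumes can be empty. Moreover, without Assumption~\ref{kdmcslcs0} the uncut model may not even have finite moments, so Theorem~\ref{maintherorem} is unavailable and you are forced to the cut-off model of Theorem~\ref{maintherorem2}; but \cite{segala}'s theorem is not stated for the cut-off model, so you are comparing expansions of two different quantities.

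The paper handles this by not citing \cite{segala}'s theorem but adapting its proof: Proposition~\ref{dkcsoiknds} rederives the loop-equation recursion (Property~3.2 of \cite{segala}) for the cut-off measure $\mu^N_{\lambda V,K}$, with an explicit exponentially small error coming from Lemma~\ref{ippcut} and Lemma~\ref{vkmslmlskmvlsmv}. One then reruns the induction of Lemma~6.2 of \cite{segala} with this proposition in place of their Property~3.2 (and with Equation~\eqref{oidcmosimcscd} from \cite{guionnet-segala} for the base case $g=0$), obtaining the map expansion directly for the cut-off model. Only then does the comparison with \eqref{kdjncoskd4} give the result. So your outline is correct, but the step you flag as routine is in fact the substance of the argument.
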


In particular, the coefficients of those power series are equal. Thus if  $\mathcal{M}_g(Q_1,\dots,Q_n)$ is the number of maps on a surface of genus $g$ with vertices $Q_1,\dots,Q_n$ one can obtain a formula which expresses this quantity with the operators $\nabla_{Q_i}$ and $L$ defined in Theorem \ref{maintherorem}.

Another important application of Theorem \ref{maintherorem} is to study the free entropy of the limit of the non-commutative distribution associated to the matrix model. The microstates free entropy was introduced by Voiculescu in \cite{freeent} as a non-commutative analog to the entropy in classical probability and has since then had numerous applications to von Neumann algebras, see for example \cite{ent1,ent2,ent3,ent4,ent5,ent6,ent7,ent8,ent9,ent10}. In \cite{guionnet-segala} Guionnet and Maurel-Segala computed the free entropy of the functional which appears as the limit of the free energy, i.e. Equation \eqref{kdjncoskd2}. Similarly we compute the microstates free entropy $\chi_g$ of the map $P\mapsto \alpha_0^V(\lambda,P)$. Note that our definition \ref{dovosvmdsdom} of the free entropy is with respect to the Gaussian measure as in \cite{osdnc} instead of the Lebesgue measure.

\begin{cor}
	\label{doifvpelmv}
	Let $V$ be a potential satisfying Assumption \ref{kdmcslcs0}, then for $\lambda$ sufficiently small,
	\begin{align*}
		\chi_g(\alpha_0^V(\lambda,\cdot)) &:= \sup_{R>0} \inf_{n\in\N} \inf_{\varepsilon>0} \limsup_{N\to\infty} \frac{1}{N^2} \log\P\left( X^N\in \Gamma_R(\alpha_0^V(\lambda,\cdot),n,N,\varepsilon) \right) \\
		&= \lambda\ \alpha_0^V(\lambda,V) - \int_0^{\lambda} \alpha_0^V(s,V)\ ds \\
		&= \int_0^{\lambda} s\ \frac{d}{ds}\alpha_0^V(s,V)\ ds.
	\end{align*}
	Note that in the case of $\alpha_0^V(\lambda,\cdot)$, if one replaces $\limsup$ by $\liminf$ in the definition of its microstates free entropy (see Definition \ref{dovosvmdsdom}) the formula above still stands.
\end{cor}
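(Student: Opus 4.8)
The plan is to combine the free-energy expansion of Theorem \ref{maintherorem} with a Laplace-type comparison between the Gibbs measure $\mu_{\lambda V}^N$ of \eqref{cmksmcdscs} and the $d$-fold GUE law $\P$ (whose density is proportional to $e^{-\frac{N}{2}\tr_N(\sum_i X_i^2)}$; write $Z_0^N$ for its normalization, so that $d\mu_{\lambda V}^N=\frac{Z_0^N}{Z_{\lambda V}^N}e^{-\lambda N\tr_N(V(X^N))}\,d\P$ and $\frac{Z_{\lambda V}^N}{Z_0^N}=\E[e^{-\lambda N\tr_N(V(X^N))}]$). First, by the very definition \eqref{cmksmcdscs}, the left-hand side of \eqref{kdjncoskd} equals $\mu_{\lambda V}^N[\frac{1}{N}\tr_N(P(X^N))]$, so Theorem \ref{maintherorem} with $k=0$ gives $\mu_{\lambda V}^N[\frac{1}{N}\tr_N(Q(X^N))]\to\alpha_0^V(\lambda,Q)$ for every $Q\in\C\langle X_1,\dots,X_d\rangle$. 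Being a weak-$*$ limit of the tracial states $Q\mapsto\mu_{\lambda V}^N[\frac{1}{N}\tr_N(Q(X^N))]$, the functional $\alpha_0^V(\lambda,\cdot)$ is itself a tracial state, and the a priori operator-norm bounds available in the perturbative regime (the ones underlying the proof of Theorem \ref{maintherorem}) show it is the distribution of a bounded $d$-tuple, so that $\chi_g(\alpha_0^V(\lambda,\cdot))$ is well defined.

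Second, I would establish that $\mu_{\lambda V}^N$ concentrates: for $\lambda\in[0,c_V]$, for every monomial $m$ and every $\delta>0$, $\mu_{\lambda V}^N\big(|\frac{1}{N}\tr_N(m(X^N))-\alpha_0^V(\lambda,m)|>\delta\big)\to0$, and $\mu_{\lambda V}^N(\max_i\|X_i^N\|>R)\to0$ for $R$ large. For $\lambda$ small this can be obtained by viewing $\mu_{\lambda V}^N$ as a controlled perturbation of the GUE, which satisfies a dimension-free log-Sobolev inequality, together with the a priori norm estimates; alternatively one may invoke the concentration estimates for such models from \cite{guionnet-segala} under Assumption \ref{kdmcslcs0}. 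Consequently, for every $n,\varepsilon$ and every sufficiently large $R$ one has $\mu_{\lambda V}^N(\Gamma_R(\alpha_0^V(\lambda,\cdot),n,N,\varepsilon))\to1$, hence $\frac{1}{N^2}\log\mu_{\lambda V}^N(\Gamma_R(\alpha_0^V(\lambda,\cdot),n,N,\varepsilon))\to0$.

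Third, set $\Gamma_R:=\Gamma_R(\alpha_0^V(\lambda,\cdot),n,N,\varepsilon)$ and use the change of measure to write
$$\P(X^N\in\Gamma_R)=\frac{Z_{\lambda V}^N}{Z_0^N}\int_{\Gamma_R}e^{\lambda N\tr_N(V(X^N))}\,d\mu_{\lambda V}^N(X^N).$$
Writing $V=\sum_j c_j m_j$ with $m_j$ monomials, on $\Gamma_R$ with $n\geq\deg V$ one has $|\frac{1}{N}\tr_N(V(X^N))-\alpha_0^V(\lambda,V)|\leq\varepsilon':=\varepsilon\sum_j|c_j|$, so $e^{\lambda N\tr_N(V(X^N))}$ lies between $e^{\lambda N^2(\alpha_0^V(\lambda,V)-\varepsilon')}$ and $e^{\lambda N^2(\alpha_0^V(\lambda,V)+\varepsilon')}$. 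Taking $\frac{1}{N^2}\log$, letting $N\to\infty$, and combining the second step with \eqref{kdjncoskd2} at $k=0$ — which gives $\frac{1}{N^2}\log\frac{Z_{\lambda V}^N}{Z_0^N}=\frac{1}{N^2}\log\E[e^{-\lambda N\tr_N(V(X^N))}]\to-\int_0^\lambda\alpha_0^V(\nu,V)\,d\nu$ — one finds that $\frac{1}{N^2}\log\P(X^N\in\Gamma_R)$ converges, to within $\lambda\varepsilon'$ of $\lambda\,\alpha_0^V(\lambda,V)-\int_0^\lambda\alpha_0^V(\nu,V)\,d\nu$. Since this limit genuinely exists (not only as a $\limsup$), the same holds with $\liminf$; letting $\varepsilon\to0$ (so $\varepsilon'\to0$), and then taking $\inf_n$ and $\sup_R$ — which, by monotonicity of $\Gamma_R$ in $n$ and $R$, do not change the value once $n\geq\deg V$ and $R$ is large — yields the first equality of the corollary. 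The second equality follows by integration by parts, $\lambda\,\alpha_0^V(\lambda,V)=\int_0^\lambda\big(\alpha_0^V(s,V)+s\,\tfrac{d}{ds}\alpha_0^V(s,V)\big)\,ds$, the map $s\mapsto\alpha_0^V(s,V)$ being real-analytic on $[0,c_V)$ by its series expansion \eqref{smcskmdcskmcslk}.

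The main obstacle is the concentration input of the second step: proving $\mu_{\lambda V}^N(\Gamma_R)\to1$ bundles together the concentration of the empirical moments of $\mu_{\lambda V}^N$ around $\alpha_0^V(\lambda,\cdot)$ and a uniform-in-$N$ control of operator norms so that a finite cut-off $R$ carries asymptotically all the mass. Under Assumption \ref{kdmcslcs0} and for $\lambda$ small this is where the real work lies; the remaining steps are bookkeeping around the free-energy expansion already established in Theorem \ref{maintherorem}.
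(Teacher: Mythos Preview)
Your approach is essentially the same as the paper's: change of measure between $\P$ and $\mu_{\lambda V}^N$, control of $\tr_N(V(X^N))$ on $\Gamma_R$ via the microstate constraint (for $n\ge\deg V$), and the free-energy expansion \eqref{kdjncoskd2} for the partition function ratio. The structure and the two-sided estimate giving both $\limsup$ and $\liminf$ are identical.

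The one substantive difference concerns the step you flag as the ``main obstacle,'' namely $\mu_{\lambda V}^N(\Gamma_R)\to1$. The paper does not appeal to log-Sobolev or to external concentration from \cite{guionnet-segala}; instead it handles both pieces internally. For the norm cut-off it uses Assumption \ref{kdmcslcs0} directly via a Chebyshev bound with the $2u_N$-th moment, giving $\mu_{\lambda V}^N(\max_i\|X_i\|>R)\le d(C/R)^{2u_N}N^{1/(2u_N)}\to0$ for $R>C$. For concentration of traces of monomials $M$ it uses Chebyshev with the variance, and computes $\mu_{\lambda V}^N(\ts_N(M(X^N))^2)-\alpha_0^V(\lambda,M)^2=\mathcal O(N^{-2})$ by combining Theorem \ref{maintherorem2} (applied to $MX_{d+1}MX_{d+1}$ in $d+1$ variables) with the factorization identity of Proposition \ref{osmocsmcdiwnjd}, $\alpha_0^V(\lambda,PX_{d+1}QX_{d+1})=\alpha_0^V(\lambda,P)\alpha_0^V(\lambda,Q)$. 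This makes the argument entirely self-contained within the paper's framework and avoids the analytic overhead of a spectral-gap or LSI argument for the perturbed potential.
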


The paper is organized as follows. In the second section we introduce all of the necessary definitions. In the third one we first give the necessary assumptions on the potential $V$ for the matrix model to be well-defined, then we prove what we call the master equation, see Theorem \ref{mainlemma}, in which we build an explicit operator $\Theta$ on the set of polynomials such that for any $P$, the distribution of $\Theta(P)$ evaluated in our random matrix model is the one of $P(x)$ where $x$ is a $d$-tuple of free semicircular variables. In the fourth section we focus on bounding what will essentially be the error term in Theorem \ref{maintherorem} which then allows us to invert the operator $\Theta$ and conclude the argument. Finally in the fifth section we prove Corollaries \ref{sdkjnvkdm} and \ref{doifvpelmv}, and we investigate the case of the GOE in the last section.

\section{General definitions}

\subsection{Basic notions of free probability}
\label{3deffree}

In order to be self-contained, we begin by recalling the following definitions from free probability. For more background on the link between those tools and Random Matrix Theory we refer to \cite{alice, deffreep, nica_speicher_2006}.

\begin{defi}~
	\label{3freeprob}
	\begin{itemize}		
		\item A \textbf{$\CC^*$-probability space} $(\A,*,\tau,\norm{.}) $ is a $\CC^*$-algebra endowed with a bounded linear map $\tau : \A \to \C$ and satisfies $\tau(\id_{\A})=1$, $\tau(a^*a)\geq 0$ and $\tau(ab) = \tau(ba) $ for any $a,b\in\A$. The map $\tau$ is called a \textbf{trace} and an element of $\A$ a \textbf{non-commutative random variable}.
		
		\item Let $\A_1,\dots,\A_n$ be $*$-subalgebras of $\A$, having the same unit as $\A$. They are said to be \textbf{free} if for all $k$, for all $a_i\in\A_{j_i}$ such that $j_1\neq j_2$, $j_2\neq j_3$, \dots , $j_{k-1}\neq j_k$:
		\begin{equation}
			\label{kddkdxkfl}
			\tau\Big( (a_1-\tau(a_1))(a_2-\tau(a_2))\dots (a_k-\tau(a_k)) \Big) = 0.
		\end{equation}
		Families of non-commutative random variables are said to be free if the $*$-subalgebras they generate are free.
		
		\item A family of non-commutative random variables $ x=(x_1,\dots ,x_d)$ is called a \textbf{free semicircular system} if the non-commutative random variables are free, self-adjoint ($x_i=x_i^*$), and for all $k$ in $\N$ and $i$, one has
		\begin{equation*}
			\tau( x_i^k) =  \int_{\R} t^k d\sigma(t),
		\end{equation*}
		with $d\sigma(t) = \frac 1 {2\pi} \sqrt{4-t^2} \ \mathbf 1_{|t|\leq2} \ dt$ the semicircle distribution. Note that thanks to Proposition 7.18 of \cite{nica_speicher_2006}, one can build a free semicircular system for any $d$.
		
	\end{itemize}
	
\end{defi}

Let us also fix a few notations concerning matrices.

\begin{defi}
	\label{3tra} ~
	\begin{itemize}
		\item $\M_N(\C)$ is the space of complex square matrices of size $N$.
		\item $\M_N(\C)_{sa}$ is the subspace of Hermitian matrices.
		\item $\tr_N$ is the non-normalized trace on $\M_N(\C)$.
		\item $\ts_N$ is the normalized trace on $\M_N(\C)$.
		\item $(e_u)_{1\leq u\leq N}$ is the canonical basis of $\C^N$.
		\item We denote $E_{r,s}=e_re_s^*$ the matrix with $1$ in the $(r,s)$ entry and zeros in all the other entries.
	\end{itemize}
\end{defi}

\subsection{The free product of $\M_N(\C)$ and a free semicircular system}

In order to interpolate between matrices and free operators, we need to construct a space in which they can exist simultaneously. One could simply use Theorem 7.9 of \cite{nica_speicher_2006} to build the free product $\M_N(\C) * \mathcal{C}_d$ of $\M_N(\C)$ with $\mathcal{C}_d$ the $\CC^*$-algebra generated by a system of $d$ free semicircular variables, however it will be useful in the proof of Theorem \ref{mainlemma} to have a more explicit construction.

We fix $d,N\in\N$, thanks to the help of the so-called full Fock space, i.e Proposition 7.18 of \cite{nica_speicher_2006}, one can easily build an explicit  $\mathcal{C}^*$-probability space $(\A,*,\tau,\norm{.}) $ where $\tau$ is a faithful trace and in which there exists a free semicircular system  $(x^i_{r,s})_{1\leq i\leq d, 1\leq r\leq s\leq N}\cup (y^i_{r,s})_{1\leq i\leq d, 1\leq r< s\leq N} $.

Next we fix $\A_N=\M_N(\A)$, thus if $\1$ is the unit of $\A$, one can easily view $\M_N(\C)$ as a subalgebra of $\A_N$ thanks to the morphism $(a_{r,s})\in\M_N(\C)\mapsto (a_{r,s}\1)\in\A_N$. We also define $x_i^N\in\A_N$ with
\begin{equation}
	\label{sokmcmwcs}
	\sqrt{N}\ (x_i^N)_{r,s} = \left\{
	\begin{array}{ll}
		\frac{x_{r,s}^i+\i\ y_{r,s}^i}{\sqrt{2}} & \mbox{if } r<s, \\
		x^i_{r,s} & \mbox{if } r=s, \\
		\frac{x_{s,r}^i-\i\ y_{s,r}^i}{\sqrt{2}} & \mbox{if } r>s.
	\end{array}
	\right.
\end{equation}
We endow $\A_N$ with the involution ${(a_{i,j})_{1\leq i,j\leq N}}^* = (a_{j,i}^*)_{1\leq i,j\leq N}$ and the trace 
\begin{equation}
	\label{tracetaun}
	\tau_N : A\in\A_N\mapsto\tau\left(\frac{1}{N}\tr_N(A)\right),\quad \A_N=\M_N(\A).
\end{equation}
Then one has the following result.

\begin{prop}
	\label{dojvdsomv}
	With the trace $\tau_N$ and the involution defined as above, $\A_N$ is a $\mathcal{C}^*$-probability space. Besides the family $(x_i^N)_{1\leq i\leq d}$ is a free semicircular system, and it is free from $\M_N(\C)$.
\end{prop}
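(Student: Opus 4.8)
The plan is to verify the three assertions in turn — that $\A_N$ is a $\CC^*$-probability space, that $(x_i^N)_{1\leq i\leq d}$ is a free semicircular system, and that this system is free from $\M_N(\C)$ — by reducing everything to the combinatorial structure of the full Fock space on which $\A$ itself was built. First I would recall that $\M_N(\A) = \M_N(\C)\otimes\A$ carries the natural $\CC^*$-norm coming from $\A \subset B(\mathcal{H})$ and the identification $\M_N(\A)\subset B(\C^N\otimes\mathcal{H})$; the involution $(a_{i,j})^* = (a_{j,i}^*)$ is precisely the adjoint under this identification, so $\A_N$ is a $\CC^*$-algebra. For $\tau_N = \tau\circ(N^{-1}\tr_N)$, positivity follows because $\tau$ is positive (indeed faithful) on $\A$ and $N^{-1}\tr_N$ sends a positive element $A^*A = (\sum_k a_{k,i}^* a_{k,j})_{i,j}$ of $\M_N(\A)$ to $N^{-1}\sum_{i,k} a_{k,i}^* a_{k,i}$, a sum of positive elements of $\A$; traciality and $\tau_N(\1)=1$ are immediate, and faithfulness of $\tau_N$ follows from faithfulness of $\tau$ by the same computation. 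This part is routine.

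Next I would show $(x_i^N)_{1\leq i\leq d}$ is a free semicircular system. Self-adjointness is a direct check from \eqref{sokmcmwcs}: the $(r,s)$ and $(s,r)$ entries are complex conjugates in the operator sense because $(x^i_{r,s})^* = x^i_{r,s}$ and $(y^i_{r,s})^* = y^i_{r,s}$. For the distribution, the cleanest route is to compute mixed moments $\tau_N\big(x_{i_1}^N\cdots x_{i_k}^N\big)$ via the formula $\tau_N(A) = N^{-1}\sum_{r} \tau\big((A)_{r,r}\big)$ and expand each entry of the product as a sum over indices; each factor $(x^N_{i})_{r,s}$ is, up to the $\sqrt N$ normalization and a factor $\tfrac1{\sqrt2}$ (off-diagonal), a single semicircular generator of the Fock-space system. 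One then recognizes that the resulting sum over index-paths, weighted by $\tau$ of products of the free generators, is exactly the genus-zero/non-crossing-pairing count that produces the semicircle law with the correct covariance $\tau_N(x_i^N x_j^N) = \delta_{ij}$, and that the joint distribution factorizes so that the $x_i^N$ are free from each other. Equivalently — and this is the shortcut I would actually use — this computation is the standard verification that the GUE-type matrix built from free semicircular entries is itself a free semicircular system, so I would phrase it as: the entries $x^i_{r,s}, y^i_{r,s}$ being free semicircular, the matrices $x_i^N$ have the same joint $*$-distribution as $d$ independent GUE matrices in the large-$N$ sense but exactly at every order here, because the entrywise freeness replaces the asymptotic factorization; this is Lemma-level and can be cited from \cite{nica_speicher_2006} or checked directly.

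The genuinely substantive point — and the one I expect to be the main obstacle — is freeness of $(x_i^N)_i$ from the constant matrices $\M_N(\C)\hookrightarrow\A_N$. Asymptotic freeness of GUE from deterministic matrices is classical, but here we need \emph{exact} freeness for fixed $N$, which holds precisely because the entries are honest free variables rather than Gaussians. I would prove this by the moment/cumulant characterization: take an alternating word $a_1 b_1 a_2 b_2\cdots$ with $a_\ell\in\M_N(\C)$ centered (i.e. $\ts_N(a_\ell)=0$, equivalently $\tau_N(a_\ell)=0$) and $b_\ell$ a centered element of the algebra generated by the $x_i^N$, and show $\tau_N$ of the product vanishes. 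Writing everything out in coordinates, $\tau_N$ of such a product becomes $N^{-1}$ times a sum, over index paths around the matrix product, of $\tau$ evaluated on an alternating product of (scalar) matrix entries and Fock-space generators; the scalar entries of the $a_\ell$ can be pulled out, and what remains is a $\tau$-expectation of a word in the free generators $x^i_{r,s}, y^i_{r,s}$ against scalar coefficients carrying the combinatorial data of the $a_\ell$. The vanishing then follows from freeness of the generator system in $\A$ together with the cancellation produced by the centering $\sum_r (a_\ell)_{r,r} = 0$; organizing this bookkeeping cleanly — keeping track of which index coincidences are forced and showing the surviving terms cancel in pairs or sum to zero — is the technical heart. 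An alternative, possibly cleaner, presentation is to invoke the general principle that if $(\A,\tau)$ contains a free semicircular family indexed by a set on which $\M_N(\C)$ acts in the "matrix" way compatibly with $\tau_N$, then the resulting $x_i^N$ are free from $\M_N(\C)$ — essentially Voiculescu's construction of matricial free products — and to refer to Theorem 7.9 of \cite{nica_speicher_2006} for the abstract statement while using the explicit Fock-space model only to identify our $x_i^N$ with the canonical generators there. I would present the proof in this second style, keeping the coordinate computation as the verification that the identification is correct.
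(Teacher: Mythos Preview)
Your proposal is correct but takes a genuinely different route from the paper. You work directly with the Fock-space combinatorics: expand $\tau_N$-moments in matrix coordinates, reduce to $\tau$-moments of words in the free generators $x^i_{r,s}, y^i_{r,s}$, and appeal to non-crossing-pairing counts to identify both the semicircular distribution and the freeness from $\M_N(\C)$. The paper instead uses an amplification trick: replace each free semicircular entry $x^i_{r,s}, y^i_{r,s}$ by an independent GUE random matrix of size $k$, observe that the resulting $N\times N$ block matrix is then a genuine GUE matrix of size $kN$, invoke the almost-sure asymptotic freeness of GUE from deterministic matrices (Theorem~5.4.5 of \cite{alice}) as $k\to\infty$ with the constants $Z\otimes I_k$, and compute the same limit a second way via the block decomposition $\tfrac{1}{kN}\tr_{kN} = \tfrac{1}{k}\tr_k\circ(\tfrac{1}{N}\tr_N\otimes\id_{\M_k(\C)})$ to identify $\tau_N$ on $(x^N, Z)$ with the trace on the abstract free product $\M_N(\C)*\mathcal{C}_d$. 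Your approach is self-contained and makes transparent \emph{why} exact freeness holds at finite $N$ (entrywise freeness replaces asymptotic independence), but the bookkeeping you flag as the ``technical heart'' for freeness from $\M_N(\C)$ is genuinely tedious to write out, and your fallback to Theorem~7.9 of \cite{nica_speicher_2006} does not by itself identify this particular construction with the abstract free product --- you would still need the moment computation. The paper's argument is only a few lines because it outsources all the combinatorics to a single known asymptotic-freeness theorem, at the cost of importing a result whose own proof is substantial.
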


\begin{proof}
	If we consider $X^{kN}_i$ defined as in Equation \eqref{sokmcmwcs} but where we replaced every free semicircular variable  by independent GUE random matrices of size $k$ (see Definition \ref{3GUEdef}), then $X^{kN}$ is a $d$-tuple of independent GUE random matrices of size $kN$. Consequently, thanks to Theorem 5.4.5 of \cite{alice}, for any polynomial $P$ and for any deterministic matrices $Z_1,\dots,Z_q\in\M_N(\C)$, almost surely
	$$\lim_{k\to\infty} \frac{1}{kN}\tr_{kN}\left(P(X^{kN},Z_1\otimes I_k,\dots,Z_q\otimes I_k)\right) = \widetilde{\tau_{N}}\left(P(x,Z^N)\right)$$
	where $x$ is a free semicircular system of $d$ variables free from $\M_N(\C)$, and $\widetilde{\tau_N}$ is the trace on the free product of $\M_N(\C)$ and the $\CC^*$-algebra generated by the free semicircular system $x$. Besides, since 
	$$ \frac{1}{kN}\tr_{kN}(\cdot) = \frac{1}{k}\tr_{k}\left(\left(\frac{1}{N}\tr_{N}\otimes \id_{\M_k(\C)}\right)(\cdot)\right),$$
	one also has that
	$$\lim_{k\to\infty} \frac{1}{kN}\tr_{kN}\left(P(X^{kN},Z_1\otimes I_k,\dots,Z_q\otimes I_k)\right) =\tau\left( \frac{1}{N}\tr_{N}\left(P(x^N,Z^N)\right)\right).$$
	Hence the conclusion.
\end{proof}

In the rest of this paper, we drop the superscript $N$ in $x_i^N$ since their distribution does not depend on $N$.

\subsection{GUE random matrices}

\label{lfdkmvdlmlsm}

In this subsection we introduce a random matrix ensemble of interest and state a few useful properties about it. 

\begin{defi}
	\label{3GUEdef}
	A GUE random matrix $X^N$ of size $N$ is a self-adjoint matrix whose coefficients are random variables with the following laws:
	\begin{itemize}
		\item For $1\leq i\leq N$, the random variables $\sqrt{N} X^N_{i,i}$ are independent centered Gaussian random variables of 
		variance $1$.
		\item For $1\leq i<j\leq N$, the random variables $\sqrt{2N}\ \Re{X^N_{i,j}}$ and $\sqrt{2N}\ \Im{X^N_{i,j}}$ are independent 
		centered Gaussian random variables of variance $1$, independent of  $\left(X^N_{i,i}\right)_i$.
	\end{itemize}
\end{defi}

When doing computations with Gaussian variables, the main tool that we use is Gaussian integration by parts, which can be summarized into the following formula: If $Z$ is a real centered Gaussian random variable with variance one and $f\in \mathcal{C}^1(\R)$, then by integration by parts, as long as $\E[|Z f(Z)|]$ and $\E[|f'(Z)|]$ are finite,
\begin{equation}
	\label{3IPPG}
	\E[Z f(Z)] = \frac{1}{\sqrt{2\pi}}\int_{\R} f(x)\ xe^{-x^2/2}dx = \frac{1}{\sqrt{2\pi}}\int_{\R} f'(x) e^{-x^2/2}dx = \E[f'(Z)].
\end{equation}

\noindent As a direct consequence, if $x$ and $y$ are centered Gaussian variables with variance one, and 
$Z = \frac{x+\i y}{\sqrt{2}}$, then with $f\in \mathcal{C}^1(\C)$,
\begin{equation}
	\label{3IPPG2}
	\E[Z f(Z,\bar{Z})] = \E[\partial_1 f(Z,\bar{Z})]\quad \text{ and  }\quad \E[\bar{Z} f(Z,\bar{Z})] = \E[\partial_2 f(Z,\bar{Z})].
\end{equation}

\noindent For example we have that given a GUE random matrix $X^N$, one can write $X^N = \frac{1}{\sqrt{N}} (x_{r,s})_{1\leq r,s\leq N}$ and then for any polynomial $Q$,
\begin{align}
	\label{smcosmcds}
	\E\left[ \frac{1}{N}\tr_N\left(X^N\ Q(X^N)\right) \right] &= \frac{1}{N^{3/2}} \sum_{r,s} \E\left[ x_{r,s}\ \tr_N\left(E_{r,s}\ Q(X^N)\right) \right] \\
	&= \frac{1}{N^{3/2}} \sum_{r,s} \E\left[ \tr_N\left(E_{r,s}\ \partial_{x_{r,s}} Q(X^N)\right) \right] \nonumber \\
	&= \frac{1}{N^{2}} \sum_{r,s} \E\left[ \tr_N\left(E_{r,s}\ \partial Q(X^N) \# E_{s,r}\right) \right] \nonumber \\
	&= \frac{1}{N^{2}} \sum_{r,s} \E\left[ e_s^* \left(\partial Q(X^N) \# e_s e_r^*\right) e_r \right] \nonumber \\
	&= \E\left[ \left(\frac{1}{N}\tr_N\right)^{\otimes 2}\left(\partial Q(X^N)\right) \right], \nonumber
\end{align}
with the notation $\#$ as in Definition \ref{sknclks}, $\partial$ the non-commutative derivative defined in Definition \ref{ksdfnlsmc}, and  $(e_u)_{1\leq u\leq  N}$ the canonical basis of $\C^N$.

\subsection{The microstates free entropy}

For $d\geq 1$, we denote by $\A_d=\C\langle X_1,\dots, X_d\rangle$ the algebra of $d$-variables non-commutative polynomials. 

\begin{defi}
	A \textbf{non-commutative law} is a linear map $\lambda:\A_d \to \C$ such that
	\begin{itemize}
		\item $\lambda$ is unital, i.e. $\lambda(1)=1$,
		\item $\lambda$ is completely positive, i.e. for any matrix $Q$ with entries in $\A_d$ the matrix $\lambda(Q^*Q)$ is positive semi-definite,
		\item $\lambda$ is tracial, that is for all $P,Q$, $\lambda(PQ)=\lambda(QP)$.
	\end{itemize}
\end{defi}

\begin{defi}
	Let $x=(x_1,\dots,x_d)$ be bounded self-adjoint elements of a tracial von Neumann algebra $(\MM,\tau)$. Then the non-commutative law of $x$ is the map
	$$ \begin{array}{ccccc}
		\lambda_x & : & \A_d & \to & \C \\
		& & P & \mapsto & \tau(P(x)) \\
	\end{array}. $$
\end{defi}

Given a non-commutative law $\lambda$, we define the microstates $\Gamma_R(\lambda,n,N,\varepsilon)$ for $n,N\in\N$ and $\varepsilon>0$ as the set of self-adjoint matrices $A_1,\dots,A_d$ with $\norm{A_i}\leq R$ and such that for any $1\leq p\leq n$, $i_1,\dots,i_p\in [1,d]^p$,
\begin{equation}
	\label{soicdjsomcs}
	|\lambda(X_{i_1}\cdots X_{i_p}) - \ts_N(A_{i_1}\cdots A_{i_p})| <\varepsilon.
\end{equation}
This then allows us to define the microstates free entropy.
\begin{defi}
	\label{dovosvmdsdom}
	Given a non-commutative law $\lambda$ and $X^N$ a $d$-tuple of GUE random matrices, its microstates free entropy is
	$$ \chi_g(\lambda) = \sup_{R>0} \inf_{n\in\N} \inf_{\varepsilon>0} \limsup_{N\to\infty} \frac{1}{N^2} \log\P\left( X^N\in \Gamma_R(\lambda,n,N,\varepsilon) \right). $$
\end{defi}
Note that the original definition of the microstates free entropy of Voiculescu in \cite{freeent} was with respect to the Lebesgue measure instead of the GUE. However those definitions only differ by a quadratic term, see Lemma 2.11 of \cite{duvgui}.

\subsection{Combinatorics and non-commutative derivatives}

Non-commutative derivatives are widely used tools in Free Probability, see for example the work of Voiculescu, \cite{refdif} and \cite{refdif2}. In this subsection, we build a very specific one which we need to define properly the coefficients of the expansion. In the next definition we introduce combinatorial objects which appear in the proof of Theorem \ref{maintherorem}. However we advise the first time reader to skip the beginning of this subsection and to jump to Definition \ref{ksdfnlsmc}. Until Subsection \ref{sdocosalsw} we will not need all of the notations introduced in this subsection. Indeed although necessary the notations introduced here are heavy.

\begin{defi}
	\label{3biz}
	Let $S$ be a set whose elements are all subsets of $\N$. Let $c_S$ be the largest of those integers, and $n$ the largest cardinal of the elements of $S$. Then we define for $j\in[1,n]$,
	\begin{align*}
		F_{j}^{1}(S) &= \Big\{ \{I_m+c_S,\dots,I_{j-1}+c_S,I_{j}+c_S ,I_{j},\dots,I_{n},3c_S+1\}   \ \Big|\ I=\{I_m,\dots,I_{n}\}\in S \Big\}, \\
		F_{n+1}^{1}(S) &= \Big\{ \{I_m+c_S,\dots,I_{n}+c_S,3c_S+2,3c_S+1\}   \ \Big|\ I=\{I_m,\dots,I_{n}\}\in S \Big\}, \\
		F_{j}^{2}(S) &= \Big\{ \{I_m+2c_S,\dots,I_{j-1}+2c_S,I_{j}+2c_S,I_{j},\dots,I_{n},3c_S+1\}   \ \Big|\ I=\{I_m,\dots,I_{n}\}\in S \Big\}, \\
		F_{n+1}^{2}(S) &= \Big\{ \{I_m+2c_S,\dots,I_{n}+2c_S,3c_S+3,3c_S+1\}   \ \Big|\ I=\{I_m,\dots,I_{n}\}\in S \Big\}, \\
		G^+(S) &= \Big\{ \{I_m,\dots,I_{n},c_S+1\}   \ \Big|\ I=\{I_m,\dots,I_{n}\}\in S \Big\}.
	\end{align*}
	\noindent We similarly define $\widetilde{F}_{j}^{1}(S)$ and $\widetilde{F}_{j}^{2}(S)$ by adding $3c_S+3$ to every integer in every set. Then we define
	$$ F_j(S) = F_{j}^{1}(S)\cup F_{j}^{2}(S)\cup \widetilde{F}_{j}^{1}(S)\cup \widetilde{F}_{j}^{2}(S), $$
	$$ F(S) = \bigcup_{0\leq j\leq n} F_j(S), $$
	$$ G(S) = G^+(S)\cup \{\emptyset\}. $$
	
	\noindent Then given a sequence $H=\{H_1,\dots,H_k\}$ with $H_i\in \{F,G,F_j\}$, we set
	$$ J_{H} = H_k\circ H_{k-1} \circ\dots \circ H_1(\{\emptyset\}). $$
	Finally, we also set
	$$ J_H^h = \{ I\in J_H\ |\ \# I = n - h\}.$$

\end{defi}

The intuition behind those definitions corresponds to the following heuristic.

\begin{rem}
	To better understand the heuristic behind the set $J_H$, one can view its construction in the following way. Let us assume that we have a family of random variables $(X_i)_{i\geq 1}$ which are all independent copies of a random variable $X$, then for every element $I=\{I_m,\dots,I_{n}\}\in J_H$, one can associate an ordered sequence of length $n-m+1$ of those random variables, i.e. $(X_{I_m},\dots,X_{I_{n}})$. Then the construction of the family $J_{\{H,H_{k+1}\}}$ is associated to the following process:
	\begin{itemize}
		\item If $H_{k+1} = G^+$, given a list $\{I_m,\dots,I_{n}\}\in J_H$ and its associated sequence of random variables $(X_{I_m},\dots,X_{I_{n}})$, let us add a new independent copy of $X$ to this sequence. Thus if we do so for every sequence, the new family of sequences defined with this process will be indexed by $J_{\{H,G^+\}}$.
		\item If $H_{k+1} = F^1_j$, given a list $\{I_m,\dots,I_{n}\}\in J_H$ and its associated sequence of random variables $(X_{I_m},\dots,X_{I_{n}})$, let first us add a new independent copy of $X$ at the end of the sequence. Secondly we insert another independent copy in the $j$-th position. Finally we replace all of the variables whose position is strictly smaller than $j$ by independent copies of $X$. Thus if we do so for every sequence, the new family of sequences defined with this process will be indexed by $J_{\{H,F_j^1\}}$.
		\item The case of $H_{k+1} = F^2_j$ is defined similarly with the difference that every new random variable that we replaced or inserted are once again replaced by another independent copy with the exception of the one at the end of the sequence.	
		\item Finally the case of $H_{k+1} = \widetilde{F}^1_j$ (respectively $H_{k+1} = F^2_j$) consists in taking independent copies of every sequences built in the case of $H_{k+1} = F^1_j$ (respectively $H_{k+1} = F^2_j$).
	\end{itemize}
	This construction will appear naturally in the proof of Theorem \ref{mainlemma}.
\end{rem}

We can now define the non-commutative polynomials associated to these sets.

\begin{defi}
	\label{sidcosc}
	Let $H$ be as in the previous definition. We define the following quantities:
	\begin{itemize}
		\item $\A_{d}^H = \C\langle X_{i,I},\ 1\leq i\leq d, I\in J_H \rangle$ the space of non-commutative polynomials. Besides, if $H$ is empty, i.e. $H=\emptyset$, then $J_H=\{\emptyset\}$ and $\A_{d}^{\emptyset} =\A_{d}$.
		\item Since $F_{j}^{1}, F_{j}^{2}, \widetilde{F}_{j}^{1}, \widetilde{F}_{j}^{2}$ and $G^+$ induce a bijection between the set $S$ and the resulting one, with $H=\{H_1,\dots,H_k\}$ such that for all $i$, $H_i$ is one of the previously mentioned functions, with $X = (X_{i,I})_{i\in [1,d],I\in J_{H}}$, we define 
		$$H_{k+1}(X) = (X_{i,I})_{i\in [1,d],I\in J_{\{H_1,\dots,H_{k+1}\}}}.$$
	\end{itemize}
\end{defi}

There is naturally a notion of degree associated to those polynomials.

\begin{defi}
	\label{degree2}
	Given $M\in \A_{d}^H$ a monomial, we denote $\deg M$ the length of $M$ as a word in the variables $(X_{i,I})_{i\in[1,d], I\in J_H}$.
\end{defi}

The previous definitions are not exactly intuitive, however this construction will appear naturally in Subsection \ref{sdocosalsw}. We also refer to Remark 2.19 of \cite{trois} for some intuition.

\begin{defi}
	\label{3biz2}
	We define the \textbf{non-commutative derivative} $\partial_{i,I}:\A_{d}^H\to\A_{d}^H\otimes\A_{d}^H$ as 
	$$\forall P,Q,\quad \partial_{i,I} (PQ) = \partial_{i,I} P \times \left(1\otimes Q\right)	 + \left(P\otimes 1\right) \times \partial_{i,I} Q,$$
	and $\forall k\in [1,d],\ K\in J_H$,
	$$ \partial_{i,I} X_{k,K} = \1_{i=k}\1_{I=K}\  1\otimes 1. $$
	We also set
	$$ \partial_{i,h} = \sum_{I\in J_H^h} \partial_{i,I},\quad \partial_{i} = \sum_{I\in J_H} \partial_{i,I}. $$
	We then define the \textbf{cyclic derivatives}
	$$\D_{i} = m \circ \partial_{i},\quad \D_{i,h} = m \circ \partial_{i,h},\quad \D_{i,I} = m \circ \partial_{i,I},$$
	with $m: A\otimes B\mapsto BA$.
\end{defi}

Note that this definition is a straightforward generalization of the following usual definition of non-commutative derivative.

\begin{defi}
	\label{ksdfnlsmc}
	We define the non-commutative derivative $\partial_{i}:\C\langle X_1,\dots,X_d\rangle \to\C\langle X_1,\dots,X_d\rangle\otimes\C\langle X_1,\dots,X_d\rangle$ as 
	$$\forall P,Q,\quad \partial_{i} (PQ) = \partial_{i} P \times \left(1\otimes Q\right)	 + \left(P\otimes 1\right) \times \partial_{i} Q,$$
	and $\forall k\in [1,d]$,
	$$ \partial_{i} X_{k} = \1_{i=k} 1\otimes 1. $$
	We then define the cyclic derivatives $\D_{i} = m \circ \partial_{i}$ with $m: A\otimes B\mapsto BA$.
\end{defi}

When handling non-commutative derivatives, the following definition will often be helpful.

\begin{defi}
	\label{sknclks}
	Let $\A$ be an algebra, given $A,B,C\in\A$, we define
	$$ (A\otimes B)\# C= ACB.$$
	Besides, we extend this definition to any finite sum of simple tensors by linearity, that is
	$$ \sum_{i\in I} (A_i\otimes B_i) \#C = \sum_{i\in I} A_i C B_i. $$
\end{defi}

Finally to conclude this subsection, we note that non-commutative derivatives are related to the so-called Schwinger-Dyson equations on semicircular variables thanks to the following proposition. One can find a proof in Lemma 5.4.7 of \cite{alice}.

\begin{prop}
	\label{3SDE}
	Let $ x=(x_1,\dots ,x_d)$ be a free semicircular system, $y = (y_1,\dots,y_r)$ be non-commutative random variables free from $x$, if the 
	family $(x,y)$ belongs to a $\CC^*$-probability space $(\A,*, \tau,\norm{.}) $, then for any polynomial $Q$ and $i\in [1,d]$,
	$$ \tau(Q(x,y)\ x_i) = \tau\otimes\tau(\partial_i Q(x,y)).$$
\end{prop}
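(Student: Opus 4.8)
The plan is to reduce the claim, using linearity in $Q$ and traciality of $\tau$ (so that $\tau(Q(x,y)\,x_i) = \tau(x_i\,Q(x,y))$), to the identity
$$\tau(x_i\, z_1 \cdots z_n) = \tau\otimes\tau\big(\partial_i(z_1 \cdots z_n)\big)$$
for an arbitrary word $z_1\cdots z_n$ in the letters $x_1,\dots,x_d,y_1,\dots,y_r$. The engine of the proof is the moment--cumulant formula: setting $z_0 \deq x_i$, one has $\tau(z_0 z_1\cdots z_n) = \sum_{\pi} \kappa_\pi(z_0,z_1,\dots,z_n)$, the sum running over non-crossing partitions $\pi$ of $\{0,1,\dots,n\}$ and $\kappa_\pi$ being the product of the free cumulants over the blocks of $\pi$.

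First I would determine which $\pi$ contribute. Two facts are used: since $(x_1,\dots,x_d)$ is free from $(y_1,\dots,y_r)$, every free cumulant of size $\geq 2$ mixing an $x$ and a $y$ vanishes; and since the $x_j$ form a free semicircular system, the only non-vanishing free cumulants among them are $\kappa_2(x_j,x_j)=1$, while $\kappa_1(x_j)=\tau(x_j)=0$. Hence in any contributing $\pi$ the block containing the position $0$ must be a pair $\{0,p\}$ with $z_p = x_i$: a singleton would contribute the factor $\kappa_1(x_i)=0$, and a block of size $\geq 2$ not of this form contains either a $y$, an $x_j$ with $j\neq i$, or three or more letters, each case forcing its cumulant to vanish.

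Next I would exploit the non-crossing constraint: the pair $\{0,p\}$ splits the remaining positions into the intervals $\{1,\dots,p-1\}$ and $\{p+1,\dots,n\}$, no block may join the two, and on each interval $\pi$ restricts to an arbitrary non-crossing partition of it. Summing the cumulant products over these two independent choices and recognising each interval sum as $\tau$ of the corresponding subword via the moment--cumulant formula gives
$$\tau(x_i\, z_1\cdots z_n) = \sum_{p\,:\,z_p = x_i} \tau(z_1\cdots z_{p-1})\,\tau(z_{p+1}\cdots z_n).$$
On the other hand, expanding the Leibniz rule that defines $\partial_i$ yields $\partial_i(z_1\cdots z_n) = \sum_{p\,:\,z_p = x_i} (z_1\cdots z_{p-1})\otimes(z_{p+1}\cdots z_n)$, so the right-hand side above is exactly $\tau\otimes\tau(\partial_i(z_1\cdots z_n))$, which is the desired identity.

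I expect the only delicate point to be the second step: cleanly excluding every block at position $0$ other than a pair of matching copies of $x_i$, which rests on the vanishing of mixed free cumulants of free families and of the higher free cumulants of a semicircular element. An alternative, more in the spirit of the rest of the paper, would be to approximate $x$ by independent GUE matrices and (when possible) $y$ by deterministic matrices with the prescribed limiting $*$-distribution, apply the Gaussian integration by parts of \eqref{smcosmcds} to obtain the identity up to an $\mathcal{O}(N^{-2})$ error, and let $N\to\infty$ using asymptotic freeness; but this only reaches laws of $y$ that admit matricial microstates, so I would write down the cumulant argument instead.
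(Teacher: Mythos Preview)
Your argument is correct. The reduction to monomials and the moment--cumulant analysis are carried out cleanly: the vanishing of mixed free cumulants across free subalgebras, together with the fact that a semicircular element has $\kappa_2=1$ and all other free cumulants zero, does force the block of position $0$ to be a pair $\{0,p\}$ with $z_p=x_i$, and the non-crossing condition then factorises the remaining sum exactly as you write.

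As for comparison with the paper: there is nothing to compare, since the paper does not prove this proposition at all but simply refers the reader to Lemma~5.4.7 of \cite{alice}. Your cumulant proof is one of the standard routes to this Schwinger--Dyson identity and is entirely self-contained given the Nica--Speicher machinery already cited in the paper. Your closing remark about the GUE-approximation alternative and its limitation to $y$'s admitting matricial microstates is accurate and a nice observation, though unnecessary for the present purpose.
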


\section{Building blocks of the proofs}

The aim of this section is to state and prove the building blocks of the main results of this paper. We start by adapting the usual Schwinger-Dyson Equation for models with a potential to a version of the model with a cut-off. In the next subsection we give some assumptions on our potentials for our model of random matrices to be well-defined. Finally we state and prove the so-called ``master equation'' which will let us prove the asymptotic expansion by iterating it.

\subsection{Schwinger-Dyson Equation for models with a cut-off}

In order to study matrix models with a cut-off, we need to find an alternative to Gaussian integration by parts introduced in Subsection \ref{lfdkmvdlmlsm}. We do so in the following lemma through a change of variables. This is a method which is well-known when deriving so-called loop-equations, see \cite{loopeq} for multiple examples.

\begin{lemma}
	\label{ippcut}
	Let $V$ be a potential, we set
	$$ d\mu^N_{V,K}(X) = \frac{1}{Z_V^N}e^{- N\tr_N(V(X))} \1_{\forall i, ||X_i^N||\leq K} d\mu^N(X),$$
	where $\mu^N$ is the law of a $d$-tuple of independent GUE random matrices. Then with $X^N$ a $d$-tuple of independent GUE random matrices, for any constant $M<K$, one has
	\begin{align*}
		&\left|\E\left[ \left(\ts_N\otimes\ts_N(\partial_i P(X^N)) - \ts_N\left(P(X^N)(\D_i V(X^N)+X_i^N)\right)\right) e^{- N\tr_N\left(V(X^N)\right)} \1_{\forall i, ||X_i^N||\leq K} \right]\right| \\
		&\leq \frac{2}{N^2(K-M)} \norm{P}_K\ \mu_{V,K}^N\left(\max_{i\leq i\leq d} \norm{X_i} \geq M\right) \E\left[ e^{- N\tr_N\left(V(X^N)\right)} \1_{\forall i, ||X_i^N||\leq K}\right],
	\end{align*}
	where $\norm{P}_K$ is defined as follows: if $P=\sum_{R \text{ monomial}} c_R R$ with $c_R\in\C$, then 
	\begin{equation}
		\label{sodcmosmcowkmcd}
		\norm{P}_K = \sum_{R \text{ monomial}} |c_M| K^{\deg R}.
	\end{equation}
\end{lemma}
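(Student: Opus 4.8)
The plan is to prove this identity by a change of variables argument adapted from the derivation of loop equations. First I would introduce, for each $i$ and each Hermitian matrix $H_i$, the perturbation $X_i \mapsto X_i + t\, H_i(X^N)$ where $H_i$ is a (matrix-valued) polynomial in the $X^N$, and differentiate the integral $\int f(X)\, d\mu^N_{V,K}(X)$ at $t=0$. The key point is that the Gaussian density $e^{-\frac{N}{2}\sum_i \tr_N(X_i^2)}$ contributes the term $-N\tr_N(X_i H_i)$, the potential factor contributes $-N\tr_N(\D_i V \cdot H_i)$ (after using the cyclic derivative and the trace property to move the derivative onto $V$), and the Jacobian of the change of variables contributes the $\tr_N\otimes\tr_N$ of $\partial_i$ applied to $H_i$; choosing $H_i$ so that the test function $P$ gets pulled out (concretely, the standard choice that reproduces $\ts_N\otimes\ts_N(\partial_i P)$ on the left) yields the bulk of the asserted equality. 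Normalizing the traces by $N$ in the right places accounts for the overall $N^{-2}$-type prefactors that show up after dividing by the partition function.

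The delicate point, and what distinguishes this from the usual Gaussian integration by parts in Subsection \ref{lfdkmvdlmlsm}, is the indicator $\1_{\forall i, \|X_i^N\|\le K}$: the change of variables does not preserve the region $\{\|X_i\|\le K\}$, so differentiating in $t$ produces a boundary term supported near $\{\|X_i^N\| = K\}$. I would control this boundary term by the following device: instead of the sharp cut-off, approximate $\1_{\|X_i\|\le K}$ by a smooth function $\chi$ that equals $1$ on $\{\|X_i\|\le M\}$ and $0$ on $\{\|X_i\|\ge K\}$, with $\|\chi'\|_\infty \le (K-M)^{-1}$, carry out the change of variables with $\chi$ in place of the indicator, and then pass to the limit. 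The error then is bounded by $\|\chi'\|_\infty$ times the size of the perturbation field $H_i$ — which on the support of $\chi'$ (i.e. where $\max_i \|X_i\| \ge M$) is at most $\|P\|_K$ by the definition \eqref{sodcmosmcowkmcd} — integrated against the relevant density, which is exactly the stated bound involving $\mu^N_{V,K}(\max_i \|X_i\| \ge M)$ and the factor $2/(N^2(K-M))$.

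Concretely the steps in order are: (1) fix the smooth cut-off $\chi$ with the stated derivative bound; (2) for the vector field $H = (H_1,\dots,H_d)$ with $H_i$ chosen to reproduce the left-hand side, compute $\frac{d}{dt}\big|_{t=0}$ of $\int P$-type quantity against the $\chi$-regularized measure, collecting the Gaussian term $X_i$, the potential term $\D_i V$, and the Jacobian term $\partial_i$; (3) isolate the single unwanted term coming from $\frac{d}{dt} \chi(\|X_i + tH_i\|)$, which is the product of $\chi'$ evaluated somewhere in $[M,K]$ and a quantity bounded by $\|P\|_K$; (4) bound that term in absolute value by $\|\chi'\|_\infty \|P\|_K$ times the $\mu^N_{V,K}$-mass of $\{\max_i\|X_i\|\ge M\}$ times the partition function, tracking the powers of $N$; (5) let $\chi \uparrow \1_{[0,K]}$ to recover the sharp cut-off in the main terms while the error bound persists. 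The main obstacle I anticipate is step (3)–(4): making precise that the derivative of the operator norm $\|X_i + tH_i\|$ in $t$ — a non-smooth function of the matrix — can be handled, and that the resulting boundary contribution is genuinely supported in the annulus $M \le \|X_i\| \le K$ with the clean $\|P\|_K$ bound; one likely resolves this by noting $\|X_i+tH_i\|$ is Lipschitz in $t$ with constant $\|H_i\|$ and using that $\chi$ is constant outside the annulus, so only an a.e. derivative bound is needed, which suffices inside the integral.
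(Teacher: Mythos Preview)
Your overall strategy---change of variables plus a smooth cutoff to tame the boundary---is the right one and matches the paper's. The gap is in \emph{where} you place the cutoff, and it makes step~(5) fail.

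You smooth the indicator $\1_{\|X_i\|\le K}$ to a function $\chi$ that is $1$ on $\{\|X_i\|\le M\}$, $0$ on $\{\|X_i\|\ge K\}$, with $\|\chi'\|_\infty\le(K-M)^{-1}$, and then propose to let $\chi\uparrow\1_{[0,K]}$. With $M$ fixed these two requirements are incompatible: for $\chi\to\1_{[0,K]}$ pointwise the transition region must shrink toward $K$, forcing $\|\chi'\|_\infty\to\infty$ and destroying the error bound. If instead you keep $\chi$ fixed and split $\1=\chi+(\1-\chi)$, the contribution of $(\1-\chi)$ is supported on $\{M\le\|X_i\|\le K\}$ but carries the full Schwinger--Dyson integrand $\ts_N\otimes\ts_N(\partial_iP)-\ts_N(P(\D_iV+X_i))$, so its bound picks up factors of $\deg P$, $\|\D_iV\|_K$ and $K$ rather than just $\|P\|_K$---you obtain a correct inequality but not the stated one.

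The paper avoids this by leaving the sharp indicator untouched and putting the cutoff into the \emph{vector field} instead: the change of variables is $\Phi_\varepsilon(X)_i=X_i+\varepsilon H_i\,h(\|X_i\|_{2p})$, with $h$ smooth, equal to $1$ on $[-M,M]$ and $0$ outside $[-L,L]$ for some $M<L<K$. Because $h$ vanishes near the boundary, $\Phi_\varepsilon$ is shown to be a diffeomorphism of $E=\{\forall i,\|X_i\|\le K\}$ onto itself for small $\varepsilon$, so the integral over $E$ is exactly preserved and no limit in the cutoff is ever taken. The sole error term then comes from differentiating $h(\|X_i\|_{2p})$; after choosing $H_j=E_{r,s}\1_{j=i}$ and summing over $r,s$ it is bounded by $\frac{\sup|h'|}{N^2}\|P\|_K\,\mu_{V,K}^N(\max_i\|X_i\|\ge M)$ times the partition function, and optimizing the admissible $h$ gives $\sup|h'|\to 2/(K-M)$. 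This placement of the cutoff also dissolves your anticipated obstacle about differentiating the operator norm: the paper uses the Schatten norm $\|X_i\|_{2p}=(\ts_N(X_i^{2p}))^{1/(2p)}$, which is a smooth function of the entries and satisfies $N^{-1/(2p)}\le\|X_i\|_{2p}/\|X_i\|\le1$, so for $p$ large the support condition on $h'$ still forces $\|X_i\|\ge M$ without any a.e.\ differentiability argument.
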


\begin{proof}
	To begin with, note that
	\begin{align*}
		&\E\left[P(X^N) e^{- N\tr_N\left(V(X^N)\right)} \1_{\forall i, ||X_i^N||\leq K} \right] \\
		&= \frac{1}{Z_N} \int_{\forall i, ||X_i^N||\leq K} P(X) e^{- N\tr_N\left(V(X)+ \frac{1}{2}\sum_{i=1}^dX_i^2\right)} dX_1\dots dX_d, 
	\end{align*}
	where the integral is over the set of Hermitian matrices of size $N$. Thus we set $E=\{X_1,\dots,X_d\in\M_N(\C)_{sa}\ |\ \forall i, ||X_i^N||\leq K\}$ and $h:\R\to\R$ a $\mathcal{C}^1$-function such that $h=1$ on an open neighborhood of $[-M,M]$ and $h=0$ on an open neighborhood of the complementary set of $[-L,L]$ where $L\in (M,K)$. Then for $\varepsilon>0$ and $H=(H_1,\dots,H_d)\in \M_N(\C)_{sa}^d$, we define
	$$\begin{array}{ccccc}
		\Phi_{\varepsilon} & : & E & \to & E \\
		& & X & \mapsto & \left(X_i + \varepsilon H_i h\left(\norm{X_i}_{2p}\right)\right)_{1\leq i\leq d}  \\
	\end{array}$$
	where $\norm{X}_{2p} = \left(\ts_N(X^{2p})\right)^{\frac{1}{2p}}$. Note that one has
	$$ N^{-\frac{1}{2p}} \leq \frac{\norm{X}_{2p}}{\norm{X}} \leq 1.$$
	Consequently, for $p$ sufficiently large, $h\left(\norm{X_i}_{2p}\right)$ is equal to $1$ as long as $\norm{X_i} \leq M$ and $h\left(\norm{X_i}_{2p}\right)$ is equal to $0$ if $\norm{X_i} \geq L$. Thus we claim $\Phi_{\varepsilon}$ is a diffeomorphism of $E$ if $\varepsilon$ is sufficiently small. Indeed, with $\norm{(X_1,\dots,X_d)} = \max_i \norm{X_i}$, we have the following:
	\begin{itemize}
		\item For all $X\in E$, $\norm{\Phi_{\varepsilon}(X)} \leq \norm{X}\vee \left(L+ \varepsilon \norm{H}\right)$ which is smaller than $K$, as long as $\varepsilon \norm{H}$ is smaller than $K-L$. Hence $\Phi_{\varepsilon}(E)\subset E$.
		\item Let us look for $\lambda_1,\dots,\lambda_d \geq 0$ such that $\Phi_{\varepsilon}(X-\lambda H) = X$, i.e. we want to find $\lambda = (\lambda_1,\dots,\lambda_d)$ such that for all $i$,
		$$ g(\lambda_i)\deq\varepsilon h(\norm{X_i-\lambda_iH_i}_{2p}) - \lambda_i =0.$$
		However $g(0)\geq 0$ and $\lim_{\lambda\to\infty} g(\lambda) = -\infty$, hence by continuity of $g$ one can always find such a $\lambda_i$. Besides, if $g(\lambda_i)= 0$, then either $\lambda_i>0$, hence $h(\norm{X_i- \lambda_i H_i}_{2p})>0$ and consequently $\norm{X_i- \lambda_i H_i} \leq L <K$, otherwise $\lambda_i=0$ and then $\norm{X_i-\lambda_iH_i} = \norm{X_i} \leq K$. Consequently, $X-\lambda H\in E$ and $\Phi_{\varepsilon}$ is surjective.
		\item Let $X,Y\in E$ be such that $\Phi_{\varepsilon}(X)= \Phi_{\varepsilon}(Y)$, then 
		\begin{align*}
			\norm{X-Y} &\leq \varepsilon \norm{H}\ \max_i \left| h\left(\norm{X_i}_{2p}\right) - h\left(\norm{Y_i}_{2p}\right) \right| \\
			&\leq \varepsilon \norm{H} \sup_{t\in\R}|h'(t)|\ \max_i \left| \norm{X_i}_{2p} - \norm{Y_i}_{2p} \right| \\
			&\leq \varepsilon \norm{H} \sup_{t\in\R}|h'(t)|\  \norm{X-Y}.
		\end{align*}
		Thus as long as $\varepsilon \norm{H} \sup_{t\in\R}|h'(t)| <1$, $\Phi_{\varepsilon}$ is injective.
	\end{itemize}
	
	Thus for any $\varepsilon>0$ sufficiently small, one has that
	\begin{align*}
		&\E\left[P(X^N) e^{- N\tr_N\left(V(X^N)\right)} \1_{\forall i, ||X_i^N||\leq K} \right] \\
		&= \frac{1}{Z_N} \int_{\forall i, ||X_i^N||\leq K} \ts_N\left(P(\Phi_{\varepsilon}(X))\right) e^{- N\tr_N\left(V(\Phi_{\varepsilon}(X))+ \frac{1}{2}\sum_{i=1}^d\Phi_{\varepsilon}(X)_i^2\right)}  |\det \mathrm{Jac}(\Phi_{\varepsilon})| dX_1\dots dX_d, \\
		&= \E\left[P(X^N) e^{- N\tr_N\left(V(X^N)\right)} \1_{\forall i, ||X_i^N||\leq K} \right] \\ 
		&\quad +\varepsilon\sum_{i=1}^d \E\left[ \left(\partial_i P(X^N)\# H_i - NP(X^N) \tr_N\left((\D_i V(X^N)+X_i^N)H_i\right)\right) e^{- N\tr_N\left(V(X^N)\right)} \1_{\forall i, ||X_i^N||\leq K} \right] \\
		&\quad +\varepsilon\sum_{i=1}^d \E\left[ P(X^N) h'(\norm{X_i}_{2p}) \frac{\ts_N(X_i^{2p-1}H_i)}{\ts_N(X_i^{2p})}\norm{X}_{2p}  e^{- N\tr_N\left(V(X^N)\right)} \1_{\forall i, ||X_i^N||\leq K} \right] \\
		&\quad + \mathcal{O}(\varepsilon^2),
	\end{align*}
	with the notation $\#$ as in Definition \ref{sknclks}. Consequently, 
	\begin{align}
		\label{lksmcsmssss}
		&\sum_{i=1}^d \E\left[ \left(\partial_i P(X^N)\# H_i - NP(X^N) \tr_N\left((\D_i V(X^N)+X_i^N)H_i\right)\right) e^{- N\tr_N\left(V(X^N)\right)} \1_{\forall i, ||X_i^N||\leq K} \right] \\
		&=- \sum_{i=1}^d \E\left[ P(X^N) h'(\norm{X_i}_{2p}) \frac{\ts_N(X_i^{2p-1}H_i)}{\ts_N(X_i^{2p})}\norm{X}_{2p}  e^{- N\tr_N\left(V(X^N)\right)} \1_{\forall i, ||X_i^N||\leq K} \right]. \nonumber
	\end{align}
	Since the equation above is linear with respect to $H$ and that every matrix can be written as a linear combination of Hermitian matrices ( $A = \frac{A+A^*}{2}-\i \frac{\i A-\i A^*}{2}$), the formula above remains true even if $H$ is not Hermitian. Thus if we set $H_j= E_{r,s}$ if $j=i$ and $0$ else, then thanks to the equation above, one has that
	\begin{align*}
		&\E\left[ \left(\partial_i P(X^N)\# E_{r,s} - NP(X^N) \tr_N\left((\D_i V(X^N)+X_i^N)E_{r,s}\right)\right) e^{- N\tr_N\left(V(X^N)\right)} \1_{\forall i, ||X_i^N||\leq K} \right] \\
		&= - \E\left[ P(X^N) h'(\norm{X_i}_{2p}) \frac{\ts_N(X_i^{2p-1}E_{r,s})}{\ts_N(X_i^{2p})}\norm{X}_{2p}  e^{- N\tr_N\left(V(X^N)\right)} \1_{\forall i, ||X_i^N||\leq K} \right].
	\end{align*}
	Thus by multiplying by $e_s$ on the right and $e_r^*$ on the left, after summing over $r$ and $s$, as well as dividing by $N^2$, one has that
	\begin{align}
		&\E\left[ \left(\ts_N\otimes\ts_N(\partial_i P(X^N)) - \ts_N\left(P(X^N)(\D_i V(X^N)+X_i^N)\right)\right) e^{- N\tr_N\left(V(X^N)\right)} \1_{\forall i, ||X_i^N||\leq K} \right] \\
		&= - \frac{1}{N^2} \E\left[ h'(\norm{X_i}_{2p}) \frac{\ts_N(P(X^N)X_i^{2p-1})}{\ts_N(X_i^{2p})}\norm{X}_{2p}  e^{- N\tr_N\left(V(X^N)\right)} \1_{\forall i, ||X_i^N||\leq K} \right]. \nonumber
	\end{align}
	We therefore have,
	\begin{align*}
		&\left|\E\left[ \left(\ts_N\otimes\ts_N(\partial_i P(X^N)) - \ts_N\left(P(X^N)(\D_i V(X^N)+X_i^N)\right)\right) e^{- N\tr_N\left(V(X^N)\right)} \1_{\forall i, ||X_i^N||\leq K} \right]\right| \\
		&\leq \frac{1}{N^2} \E\left[ |h'(\norm{X_i}_{2p})| \norm{P(X^N)}  e^{- N\tr_N\left(V(X^N)\right)} \1_{\forall i, ||X_i^N||\leq K} \right] \\
		&\leq \frac{\sup_{t\in\R}|h'(t)|}{N^2} \E\left[ \1_{\norm{X_i}\geq M} \norm{P(X^N)}  e^{- N\tr_N\left(V(X^N)\right)} \1_{\forall i, ||X_i^N||\leq K} \right] \\
		&\leq \frac{\sup_{t\in\R}|h'(t)|}{N^2} \norm{P}_K\ \mu_{V,K}^N\left(\max_{i\leq i\leq d} \norm{X_i} \geq M\right) \E\left[ e^{- N\tr_N\left(V(X^N)\right)} \1_{\forall i, ||X_i^N||\leq K}\right].
	\end{align*}
	Finally, let us remember that $h:\R\to\R$ is any $\mathcal{C}^1$-function such that $h=1$ on an open neighborhood of $[-M,M]$ and $h=0$ on an open neighborhood of the complementary set of $[-L,L]$ where $L\in (M,K)$. Thus if $M<a<b<K$, and $g$ is the primitive which takes value $1$ in $0$ of the function equal to $0$ on $(-\infty,a]\cup [b,\infty)$, $-\frac{2}{b-a}$ in $\frac{a+b}{2}$ and piecewise affine otherwise, then one can take $h:t\in\R\mapsto g(t)g(-t)$. And for all $M<a<b<K$, one thus has that 
	$$ \sup_{t\in\R}|h'(t)| \leq \frac{2}{b-a}.$$
	Hence the conclusion by letting $a$ go to $M$ and $b$ to $K$.
\end{proof}

\subsection{Assumption on the potential}

In order to study matrix models without a cut-off it is first necessary to ensure that our random variables are actually integrable, i.e that for any polynomial $P$,
$$ \E\left[\left|\ts_N\left(P(X^N)\right)\right| e^{-\lambda N\tr_N\left(V(X^N)\right)}\right] <\infty. $$
Besides, the proof of expansions such as the one of Theorem \ref{maintherorem} usually require some sort of concentration estimates. In this paper we will assume that our potential $V$ satisfies the following assumption.

\begin{assum}
	\label{kdmcslcs0}
	We say that a potential $V$ is well-behaved if $X\in\M_N(\C)_{sa}^d\mapsto \tr_N(V(X))$ is real-valued and there exists a constant $C$ and a sequence $u_N\gg \log(N)$ such that for any $N\in\N$, $\lambda\in [0,1]$, $i\in [1,d]$ and $k\leq u_N$,
	$$ \frac{\E\left[\ts_N\left((X_i^N)^{2k}\right) e^{-\lambda N\tr_N\left(V(X^N)\right)}\right]}{ \E\left[ e^{-\lambda N\tr_N\left(V(X^N)\right)}\right]} \leq C^k, $$
	where $X^N$ is a $d$-tuple of independent GUE random matrices.
\end{assum}

This assumption is known to be satisfied for class of polynomial potentials, see notably the recent work of Guionnet and Maurel-Segala \cite{conftr} which introduced the notion of confining and $(\eta,A,I)$-trapping polynomials. In particular, Theorem 2.2 of \cite{conftr} implies that polynomials which are $(\eta,A,I)$-trapping also satisfy Assumption \ref{kdmcslcs0}. It is further well-known that trace-convex polynomials satisfy Assumption \ref{kdmcslcs0}. More precisely we have the following assumption.

\begin{assum}
	\label{kdmcslcs}
	We say that a potential $V$ is $c$-convex with $c>0$ if it is trace self-adjoint, i.e. that $X\in\M_N(\C)_{sa}^d\mapsto\tr_N(V(X))$ is real-valued for any $N$, and $$(X_1,\dots,X_d)\in\M_N(\C)_{sa}^d\mapsto \tr_N\left(V(X)+\frac{1-c}{2}\sum_{i=1}^{d}X_i^2\right)$$ is convex.
\end{assum}

Then Assumption \ref{kdmcslcs} implies Assumption \ref{kdmcslcs0} thanks to the following lemma which is a rather direct consequence of Lemma 2.2 of \cite{gui-seg-2}.

\begin{lemma}
	\label{semvpeme}
	We introduce the probability measure
	$$ d\mu^N_{V}(X) = \frac{1}{Z_V^N}e^{- N\tr_N(V(X))} d\mu^N(X),$$
	where $\mu^N$ is the law of a $d$-tuple of independent GUE random matrices, and $Z_V^N$ a normalizing constant. Then if $V$ is $c$-convex, there exist $\alpha,\lambda_0>0$ and $M_0<\infty$ such that for all $\lambda\in[0,1]$, $M\geq M_0$ and all integer $N$,
	\begin{equation}
		\label{skjdmcskc}
		\mu_{\lambda V}^N\left(\max_{i\leq i\leq d} \norm{X_i} > M\right) \leq e^{-\alpha MN}.
	\end{equation}
	Besides with $X^N$ a $d$-tuple of independent GUE random matrices, there exists a constant $C$ such that for all $\lambda\in[0,1]$, $i\in [1,d]$ and $k\leq \frac{\alpha}{2}N$.
	\begin{equation}
		\E\left[ \norm{X_i^N}^k e^{-\lambda N\tr_N(V(X^N))} \right] \leq C^k \E\left[ e^{-\lambda N\tr_N(V(X^N))} \right].
	\end{equation}
\end{lemma}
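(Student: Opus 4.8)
The plan is to deduce Lemma \ref{semvpeme} from the convexity hypothesis by a combination of a Brascamp--Lieb / Herbst-type concentration argument and a bootstrap. First I would set up the framework: under Assumption \ref{kdmcslcs} the measure $\mu_{\lambda V}^N$ has density proportional to $e^{-N\tr_N(W_\lambda(X))}$ on the space of Hermitian tuples, where $W_\lambda(X) = \lambda V(X) + \tfrac12\sum_i X_i^2$, and by $c$-convexity the function $X \mapsto \tr_N\big(W_\lambda(X) - \tfrac{c}{2}\sum_i X_i^2\big) = \tr_N\big(\lambda V(X) + \tfrac{1-c}{2}\sum_i X_i^2 + \tfrac{c}{2}\sum_i X_i^2\big)$ is convex for every $\lambda\in[0,1]$ (note the $\lambda$-scaled potential inherits convexity since $\lambda V + \tfrac{1-c}{2}\sum X_i^2$ convex and $\tfrac{(1-\lambda)c}{2}\sum X_i^2$ convex, or one just re-does the elementary Hessian check). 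Hence $\mu_{\lambda V}^N$ is a log-concave measure that is a bounded perturbation of a Gaussian with covariance $\tfrac1{cN}\mathrm{Id}$, so the Brascamp--Lieb inequality (or the Bakry--\'Emery criterion) gives a Gaussian concentration inequality: for any $1$-Lipschitz function $f$ of the entries (with respect to the Euclidean structure in which the reference Gaussian has variance $(cN)^{-1}$ per real coordinate), $\mu_{\lambda V}^N\big(|f - \mu_{\lambda V}^N(f)| > t\big) \le 2 e^{-c N t^2/2}$. This is exactly the content invoked via ``Lemma 2.2 of \cite{gui-seg-2}'', which I would simply cite.

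Next I would apply this to the operator norm. The map $(X_1,\dots,X_d)\mapsto \max_i \|X_i\|$ is $1$-Lipschitz for the Hilbert--Schmidt norm on $\M_N(\C)_{sa}^d$ and hence, after the rescaling, Lipschitz with the right constant, so concentration gives $\mu_{\lambda V}^N\big(\max_i\|X_i\| > \mathbb{E}_{\lambda V}[\max_i \|X_i\|] + t\big) \le 2 e^{-\alpha' N t^2}$ for a universal $\alpha'$ depending only on $c$. To turn this into \eqref{skjdmcskc} I need an a priori bound on the median (or mean) $m_N := \mu_{\lambda V}^N(\max_i\|X_i\|)$ that is bounded uniformly in $N$ and $\lambda\in[0,1]$; this is where the convexity is used a second time. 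One way: the convexity forces $\tr_N(W_\lambda)$ to grow at least quadratically in the direction of large eigenvalues (more precisely $\tr_N(\lambda V + \tfrac{1-c}{2}\sum X_i^2)$ convex plus the leftover $\tfrac{c}{2}\sum X_i^2$ gives a coercive lower bound $\tr_N W_\lambda(X) \ge \tfrac{c}{2}\sum_i \tr_N(X_i^2) - C N$ uniformly in $\lambda$), from which a direct union bound over an $\varepsilon$-net of the sphere, or the standard argument bounding $\mu_{\lambda V}^N(\|X_i\|>M)$ by comparing with the Gaussian tail, yields $\mu_{\lambda V}^N(\max_i\|X_i\|>M)\le e^{-\alpha M N}$ for $M\ge M_0$, with $\alpha,M_0$ depending only on $c$ and $\deg V$. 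Actually the cleanest route is: concentration gives sub-Gaussian tails around $m_N$, and a crude moment bound (e.g. $\mathbb{E}[\tr_N(X_i^{2})]\le C$, obtained by integrating the Schwinger--Dyson relation or directly from the coercivity of $W_\lambda$) forces $m_N\le C'$; combining, $\mu_{\lambda V}^N(\max_i\|X_i\| > M) \le 2 e^{-\alpha' N (M-C')^2} \le e^{-\alpha M N}$ for $M$ large. Either packaging gives \eqref{skjdmcskc}.

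For the second assertion I would integrate the tail bound. Writing $\mathbb{E}[\,\cdot\,]$ for the GUE expectation and using $\mathbb{E}[\|X_i^N\|^k e^{-\lambda N\tr_N V(X^N)}] = Z_{\lambda V}^N\,\mu_{\lambda V}^N(\|X_i\|^k)$ together with $\mathbb{E}[e^{-\lambda N\tr_N V}] = Z_{\lambda V}^N$, it suffices to bound $\mu_{\lambda V}^N(\|X_i\|^k)$. By the layer-cake formula,
\[
\mu_{\lambda V}^N(\|X_i\|^k) \le M_0^k + k\int_{M_0}^\infty M^{k-1}\, \mu_{\lambda V}^N(\|X_i\| > M)\,dM \le M_0^k + k\int_{M_0}^\infty M^{k-1} e^{-\alpha M N}\,dM.
\]
For $k \le \tfrac{\alpha}{2}N$ the integrand's polynomial factor $M^{k-1}$ is dominated by $e^{\alpha M N/2}$ once $M$ is bounded below (since $(k-1)\log M \le \tfrac{\alpha N}{2} M$ for $M\ge M_1$), so the integral is at most $C_1^k$, and the whole bound is $\le C^k$ with $C=C(c,\deg V)$. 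This gives the stated inequality. The main obstacle, and the only genuinely nontrivial input, is the uniform-in-$N$ control of the mean/median $m_N$ of $\max_i\|X_i\|$: concentration alone only localizes the mass, and one still needs the coercivity coming from $c$-convexity to pin that localization point at an $O(1)$ location. Everything else — the Brascamp--Lieb concentration inequality, the Lipschitz estimates, and the layer-cake integration — is routine, and this is precisely why the lemma is stated as ``a rather direct consequence of Lemma 2.2 of \cite{gui-seg-2}'': that reference supplies both the concentration and the a priori moment bound, and the present proof is essentially the bookkeeping that converts \eqref{skjdmcskc} into the exponential moment bound.
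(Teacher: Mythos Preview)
Your proposal is correct and follows essentially the same route as the paper. For \eqref{skjdmcskc} the paper simply invokes Lemma~2.2 of \cite{gui-seg-2} (noting that $\lambda V$ inherits $c$-convexity for $\lambda\in[0,1]$), whereas you unpack that citation into its Brascamp--Lieb/Herbst ingredients plus the uniform bound on the mean; for the moment bound, both you and the paper use the identical layer-cake argument, splitting the integral at $M_0$ and controlling the tail via $u^{k-1}\le e^{uk}$ together with $k\le \alpha N/2$.
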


\begin{proof}
	If $V$ is $c$-convex then so is $\lambda V$ for $\lambda\in [0,1]$, thus one can use Lemma 2.2 of \cite{gui-seg-2} with $\eta$ the maximum of the coefficients of $V$ which immediately yields Equation \eqref{skjdmcskc}. Besides,
	\begin{align*}
		\frac{\E\left[ \norm{X_i^N}^k e^{-\lambda N\tr_N(V(X^N))} \right]}{\E\left[ e^{-\lambda N\tr_N(V(X^N))}\right]} &= k \int_0^{\infty} \mu_{\lambda V}^N\left(\norm{X_i}>u\right) u^{k-1} du \\
		&\leq k M_0^k + k \int_{M_0}^{\infty} e^{-\alpha uN} u^{k-1} du \\
		&\leq k M_0^k + k \int_{M_0}^{\infty} e^{u(k-\alpha N)} du \\
		&\leq k M_0^k + k \int_{M_0}^{\infty} e^{-u\alpha N/2} du \\
		&\leq k M_0^k + 1.
	\end{align*}
	Hence the conclusion.
\end{proof}

In the case where we have a cut-off, we do not need the assumptions introduced previously. However, it will be useful to have some control on the quantity
$$ \mu_{V,K}^N\left(\max_{i\leq i\leq d} \norm{X_i} \geq M\right),$$
which appears in Lemma \ref{ippcut}. Thanks to Proposition 7.1 of \cite{gui-seg-2}, one has that the following lemma.

\begin{lemma}
	\label{vkmslmlskmvlsmv}
	Given $V$ a trace self-adjoint polynomial, there exist nonnegative constants $K_0,M_0,\alpha$ such that for $K\geq K_0$, we can find a constant $c_{V,K}$ such that for $\lambda\in [-c_{V,K},c_{V,K}]$, for all $M\geq M_0$,
	$$ \mu_{\lambda V,K}^N\left(\max_{i\leq i\leq d} \norm{X_i} \geq M\right) \leq e^{-\alpha M N}.$$	
\end{lemma}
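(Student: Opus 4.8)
To establish Lemma~\ref{vkmslmlskmvlsmv} I would first reduce it to the fact that, once $\lambda$ is small relative to $K$, the measure $\mu_{\lambda V,K}^N$ is uniformly log-concave on the convex set $\B_K\deq\{X\in\M_N(\C)_{sa}^d:\norm{X_i}\leq K\ \forall i\}$; after that the standard concentration machinery for log-concave measures applies, and this is precisely the content of Proposition~7.1 of \cite{gui-seg-2}. For the reduction, note that since $V$ is trace self-adjoint, $X\mapsto\tr_N(V(X))$ is a real smooth function on $\M_N(\C)_{sa}^d$; expanding $\frac{d^2}{dt^2}\big|_{t=0}\tr_N(V(X+tH))$ monomial by monomial shows that its Hessian, measured against the Hilbert--Schmidt norm $\norm{H}_{\mathrm{HS}}^2\deq\sum_{i=1}^d\tr_N(H_i^2)$, is bounded on $\B_K$ by a constant $C_V(K)$ \emph{independent of $N$} (each second-order term has the form $\tr_N(A\,H_a\,B\,H_b)$ with $\norm{A},\norm{B}\leq K^{\deg V}$, which the Cauchy--Schwarz inequality for $\tr_N$ bounds by $\norm{A}\norm{B}\norm{H}_{\mathrm{HS}}^2$). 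Since the Hessian of $X\mapsto\frac12\sum_{i=1}^d\tr_N(X_i^2)$ equals the identity, for $|\lambda|\leq c_{V,K}$ with $c_{V,K}\leq(2C_V(K))^{-1}$ the potential $X\mapsto N\tr_N\big(\lambda V(X)+\frac12\sum_{i=1}^dX_i^2\big)$ has Hessian at least $\frac N2\,\mathrm{Id}$ on $\B_K$.

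The restriction to the convex set $\B_K$ of a measure whose potential is $\frac N2$-uniformly convex remains $\frac N2$-log-concave, and hence (by the Bakry--\'Emery logarithmic Sobolev inequality for convex domains, or the Brascamp--Lieb inequality, together with Herbst's argument) it satisfies: every $g$ that is $1$-Lipschitz for $\norm{\cdot}_{\mathrm{HS}}$ obeys $\mu_{\lambda V,K}^N\big(\,|g-\E_{\mu_{\lambda V,K}^N}[g]|\geq t\,\big)\leq 2\,e^{-cNt^2}$ with $c>0$ a universal constant.

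I would then pass from linear statistics to the operator norm. Because the potential, the quadratic part and the cut-off are all invariant under $X_i\mapsto UX_iU^*$ ($U\in U(N)$), $\mu_{\lambda V,K}^N$ is conjugation invariant, so $\E_{\mu_{\lambda V,K}^N}[X_i]=c_i\,\mathrm{Id}$ is a scalar matrix with $c_i=\E_{\mu_{\lambda V,K}^N}[\ts_N(X_i)]$; feeding $P=1$ and $M=K/2$ into the Schwinger--Dyson identity of Lemma~\ref{ippcut} applied to the potential $\lambda V$ yields $|c_i|\leq|\lambda|\,C_V'(K)+\mathcal O(N^{-2})$ for some $C_V'(K)<\infty$ independent of $N$, hence $|c_i|$ is bounded by a universal constant provided $c_{V,K}$ is also taken $\leq C_V'(K)^{-1}$ and $K\geq K_0$. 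For a fixed unit vector $v\in\C^N$, $X\mapsto\langle X_iv,v\rangle=\tr_N(X_ivv^*)$ is $1$-Lipschitz for $\norm{\cdot}_{\mathrm{HS}}$ with mean $c_i$, so by the previous step it has a sub-Gaussian tail about $c_i$. Covering the unit sphere of $\C^N$ by a $1/4$-net $\mathcal N$ with $|\mathcal N|\leq 9^{2N}$ and using $\norm{X_i}\leq 2\sup_{v\in\mathcal N}|\langle X_iv,v\rangle|$ for Hermitian $X_i$, a union bound gives $\mu_{\lambda V,K}^N(\norm{X_i}\geq M)\leq 2\cdot 9^{2N}\,e^{-cN(M/2-|c_i|)^2}$; once $M$ exceeds a universal threshold $M_0$ the Gaussian factor swallows the entropy $9^{2N}$, so $\mu_{\lambda V,K}^N(\norm{X_i}\geq M)\leq e^{-\alpha NM}$ for $M\geq M_0$. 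A final union over $i\in[1,d]$, with $\alpha$ and $M_0$ adjusted, completes the argument.

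The step I expect to cause the most trouble is keeping $\alpha$ and $M_0$ independent of $K$: this is why one fixes the convexity constant to be exactly $\frac N2$, pushing all the $K$-dependence into $c_{V,K}$ (which may and does shrink as $K$ grows), so that the logarithmic Sobolev constant of the second paragraph and the net cardinality $9^{2N}$ remain $K$-free; and it is why the mean $c_i$ must be controlled through the loop equation of Lemma~\ref{ippcut} rather than by the trivial bound $|c_i|\leq K$. Apart from this bookkeeping, the second and third paragraphs are exactly what Proposition~7.1 of \cite{gui-seg-2} provides for a convex potential on a cut-off domain, so one may equally well invoke it directly once the convexity of $\lambda V+\frac12\sum_{i=1}^dX_i^2$ on $\B_K$ has been checked.
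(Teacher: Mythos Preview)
Your approach is essentially the same as the paper's: the paper gives no proof at all for this lemma beyond the sentence ``Thanks to Proposition~7.1 of \cite{gui-seg-2}, one has that the following lemma,'' and you have correctly identified this reference, explained why it applies (the key point being that for $|\lambda|\leq c_{V,K}$ the potential $\lambda V+\tfrac12\sum_i X_i^2$ is $\tfrac12$-uniformly convex on the ball $\B_K$, with all $K$-dependence absorbed into $c_{V,K}$), and sketched its proof via log-concave concentration and an $\varepsilon$-net on the sphere. Your handling of the mean $c_i$ through Lemma~\ref{ippcut} with $P=1$ is a nice touch that keeps $M_0$ independent of $K$; note that the error term there is harmless because $\mu_{\lambda V,K}^N(\max_i\norm{X_i}\geq M)\leq 1$ trivially, so no circularity arises.
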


\subsection{The master equation}

\label{sdocosalsw}

The objective of this subsection is to prove the so-called master equation in Theorem \ref{mainlemma} below, as well as its equivalent for random matrix models with a cut-off in Theorem \ref{mainlemma2}. As we will see, we can deduce Theorem \ref{maintherorem} by iterating it repetitively while controlling the error term. Moreover it is worth noting that unlike Theorem \ref{maintherorem}, we hardly need any assumption on the potential $V$. Indeed, besides a way to check that our random variables are integrable, which is a much weaker assumption than Assumption \ref{kdmcslcs0}, we do not even need $V$ to be polynomial. It would be enough to assume that $V$ is once differentiable in the usual sense of the term as a function on $\M_N(\C)_{sa}^d$. Finally, recall Definitions \ref{3biz} and \ref{sidcosc} as well as the notations of Proposition \ref{dojvdsomv}.

\begin{theorem}
	\label{mainlemma}
	Let the following objects be given:
	\begin{itemize}
		\item $P\in\A_{d}^H$,
		\item $V\in \A_{d}$ such that for any polynomial $Q$,
		$$ \E\left[\left|\ts_N\left(Q(X^N)\right) e^{-\lambda N\tr_N\left(V(X^N)\right)} \right| \right] <\infty, $$
		\item $X^N$ a family of $d$ independent GUE matrices,
		\item $x,x^1,x^2,\dots\in\A_N$ an infinite sequence of free semicircular systems of $d$ variables, freely independent of each other.
	\end{itemize}
	Then with $n$ the largest cardinal of the elements of $J_H$, $T_n = \{t_1,\dots,t_{n}\}$ a sequence of non-negative number,  $\widetilde{t}_1,\dots,\widetilde{t}_{n}$ the elements of $T_n$ ordered increasingly, for $m\leq n+1$, $i\in [1,d]$ and $I = \{I_m,\dots,I_{n}\}\in J_H$, with $t_0=0$, we set
	$$ X_{i,I}^{N,T_{n}} =e^{\widetilde{t}_{m-1}/2}  \left(\sum_{l=m}^{n} (e^{-\widetilde{t}_{l-1}} -e^{-\widetilde{t}_l})^{1/2} x^{I_{l}}_i + e^{-\widetilde{t}_{n} /2} X_i^N \right), $$
	$$ x_{i,I}^{T_{n}} = e^{\widetilde{t}_{m-1}/2} \left(\sum_{l=m}^{n} (e^{-\widetilde{t}_{l-1}} -e^{-\widetilde{t}_l})^{1/2} x^{I_{l}}_i + e^{- \widetilde{t}_{n}/2} x_i \right). $$
	Then
	\begin{align}
		\label{sicdsmsc}
		&\E\left[ \tau_N\left(P\left(X^{N,T_n}\right)\right)\ e^{-N\tr_N(V(X^N))} \right]  \\
		= &\ \tau\left(P(x^{T_n})\right)\ \E\left[ e^{-N\tr_N(V(X^N))} \right] \nonumber \\
		&- \int_{\widetilde{t}_n}^{\infty} \E\left[ \tau_N\left(\nabla_V^{H,T_{n+1}}(P)\left(X^{N,T_{n+1}}\right)\right)\ e^{-N\tr_N(V(X^N))} \right] dt_{n+1} \nonumber \\
		&+ \frac{1}{N^2} \int_{\widetilde{t}_{n}}^{\infty}\int_0^{t_{n+1}} \E\left[ \tau_N\left(L^{H,T_{n+2}}(P)\left(X^{N,T_{n+2}}\right)\right)\ e^{-N\tr_N(V(X^N))} \right] dt_{n+2} dt_{n+1}, \nonumber
	\end{align}
	where $\nabla_V^{H,T_{n+1}}: \A_{d}^H\to \A_{d}^{\{H,G\}}$ is given by
	\begin{align}
		\nabla_V^{H,T_{n+1}}(Q) \deq \frac{1}{2}\sum_{1\leq i\leq d}\ \sum_{0\leq h\leq n}\ e^{(\widetilde{t}_h-t_{n+1})/2}\ \D_{i,h}Q(G^+(X))\ \D_iV(X^{\emptyset}),
	\end{align}
	with $X^{\emptyset} = (X_{i,\emptyset})_{i\in [1,d]}$ and $G^+(X)$ is as in Definition \ref{sidcosc}. The operator $L^{H,T_{n+2}}$ is defined as follows. We first define the operators $L_s^{H,T_{n+2}}:\A_{d}^{H}\to \A_{d}^{\{H,F_s\}}$, for $s$ from $1$ to $n+1$ by
	\begin{align*}
		&L_s^{H,T_{n+2}}(Q) \deq \frac{1}{2} \sum_{1\leq i,j\leq d} \sum_{\substack{0\leq h,k \leq n \\ 0\leq x,y\leq s-1}} e^{(\widetilde{t}_h+\widetilde{t}_k+\widetilde{t}_y+\widetilde{t}_x)/2-t_{n+1}-t_{n+2}} \\
		&\quad\quad\quad\quad\quad\quad\quad\quad\quad\quad\quad\quad\quad\quad\quad\times \sum_{\substack{I\in J_H^x, J\in J_H^y \\ \text{ such that } I_s=J_s}} \Theta^{F_s^1,\widetilde{F}_s^1,\widetilde{F}_s^2,F_s^2}\Big([\partial_{j,I} \otimes \partial_{j,J}]\circ \partial_{i,k}\circ\D_{i,h}Q\Big),
	\end{align*}
	where for $A,B,C,D\in \A_{d}^H$,
	$$ \Theta^{F_s^1,\widetilde{F}_s^1,\widetilde{F}_s^2,F_s^2}(A\otimes B\otimes C \otimes D) = B(F_s^1(X)) A(\widetilde{F}_s^1(X)) D(\widetilde{F}_s^2(X)) C(F_s^2(X)). $$
	
	\noindent Note that since $I\in J_H$ is always written $I=\{I_m,\dots,I_n\}$, the condition "$I,J\in J_H$, such that $I_{n+1}=J_{n+1}$" is satisfied for any $I,J$. Finally, we define $L^{H,T_{n+2}}:\A_{d}^{H}\to \A_{d}^{\{H,F\}}$ as
	\begin{equation}
		\label{3fullop}
		L^{H,T_{n+2}}(Q) \deq \sum_{1\leq s\leq n+1} \1_{[\widetilde{t}_{s-1},\widetilde{t}_s]}(t_{n+2})\ L_s^{H,T_{n+2}}(Q).
	\end{equation}
	Note that $\widetilde{t}_s$ above is the $l$-th largest element of $T_{n+1}$ and not of $T_{n+2}$.
	
\end{theorem}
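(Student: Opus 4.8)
The plan is to prove Theorem~\ref{mainlemma} by an interpolation (Ornstein--Uhlenbeck type) argument combined with Gaussian integration by parts, in the spirit of \cite{un,trois}. The central idea is to interpolate, for the single extra time-parameter $t_{n+1}$, between the semicircular variables and the GUE matrices inside the polynomial $P$, while keeping the weight $e^{-N\tr_N(V(X^N))}$ attached to a \emph{fixed} copy $X^N$ of the GUE family. Concretely, I would define $\psi(t_{n+1}) = \E\big[\tau_N\big(P(X^{N,T_{n+1}})\big)\, e^{-N\tr_N(V(X^N))}\big]$ where $X^{N,T_{n+1}}$ uses the formula in the statement with the newly appended time $t_{n+1}\geq \widetilde t_n$. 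By construction, as $t_{n+1}\to\infty$ the GUE contribution $e^{-\widetilde t_{n+1}/2}X_i^N$ vanishes and $X^{N,T_{n+1}}_{i,I}$ becomes a free semicircular family (independent of the weight), so $\lim_{t_{n+1}\to\infty}\psi(t_{n+1}) = \tau(P(x^{T_n}))\,\E[e^{-N\tr_N(V(X^N))}]$, using freeness from $\M_N(\C)$ established in Proposition~\ref{dojvdsomv}; while $\psi(\widetilde t_n)$ is exactly the left-hand side of \eqref{sicdsmsc}. Thus the fundamental theorem of calculus gives the left side as $\tau(P(x^{T_n}))\,\E[\cdots] - \int_{\widetilde t_n}^\infty \psi'(t_{n+1})\,dt_{n+1}$, and the whole content is the identification of $\psi'$.

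Next I would compute $\psi'(t_{n+1})$ explicitly. Differentiating $X^{N,T_{n+1}}_{i,I}$ in $t_{n+1}$ produces, via the chain rule for the non-commutative derivative $\partial_{i,I}$, a sum over variables of terms of the form $\tau_N(\partial_{i,I}P \# \dot X_{i,I})$; the time-derivatives $\dot X_{i,I}$ are themselves linear combinations of the semicircular generators and of $X_i^N$ with coefficients that are precisely the exponential weights $e^{(\widetilde t_h - t_{n+1})/2}$ appearing in $\nabla_V^{H,T_{n+1}}$. Here the two mechanisms split. The semicircular part is handled by the Schwinger--Dyson equation, Proposition~\ref{3SDE}: $\tau$ of a polynomial times $x_i$ equals $\tau\otimes\tau$ of $\partial_i$ of it, which, after re-expressing everything over the larger index set $J_{\{H,G\}}$ (this is what the functor $G^+$ does --- it records the ``new'' independent copy of the semicircular system that appears from the derivative), collapses into a cyclic derivative $\D_{i,h}$ and recombines with the analogous term coming from $V$'s own $x_i$-dependence. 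The $X^N$ part is handled by Gaussian integration by parts on the GUE entries, exactly as in the computation \eqref{smcosmcds}: bringing down $X_i^N$ against $e^{-N\tr_N(V(X^N))}$ yields $-N\,\tr_N(\cdots \D_iV(X^N))$ (the first-order, $O(1)$ term feeding $\nabla_V$) plus an $N^{-2}$ correction term coming from differentiating $P$ itself, which must be developed one further step.

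The $N^{-2}$ term is where the second time-integral and the operator $L^{H,T_{n+2}}$ enter. After the Gaussian IBP on $X_i^N$, the ``self-interaction'' contribution (derivative hitting $P$ rather than the weight) still contains a bare $X_j^N$ (from differentiating the $X_i^N$-linear part once more), so one must integrate by parts a \emph{second} time; iterating the interpolation with a further parameter $t_{n+2}\in[\widetilde t_{s-1},\widetilde t_s]$ and applying Schwinger--Dyson/IBP again produces the double derivative $[\partial_{j,I}\otimes\partial_{j,J}]\circ\partial_{i,k}\circ\D_{i,h}P$ with the four new index substitutions $F^1_s,\widetilde F^1_s,\widetilde F^2_s,F^2_s$ and the braided reassembly $\Theta^{F_s^1,\widetilde F_s^1,\widetilde F_s^2,F_s^2}$; the indicator $\1_{[\widetilde t_{s-1},\widetilde t_s]}(t_{n+2})$ simply records which ``time-slab'' $t_{n+2}$ falls into, which dictates where in the ordered list of previous times the new variables get inserted --- this is the combinatorial bookkeeping encoded in Definition~\ref{3biz}. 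Assembling these pieces gives $\psi'(t_{n+1}) = \E[\tau_N(\nabla_V^{H,T_{n+1}}(P)(X^{N,T_{n+1}}))\,e^{-N\tr_N(V(X^N))}] - \tfrac1{N^2}\int_0^{t_{n+1}} \E[\tau_N(L^{H,T_{n+2}}(P)(X^{N,T_{n+2}}))\,e^{-N\tr_N(V(X^N))}]\,dt_{n+2}$, which upon substitution into the FTC identity is exactly \eqref{sicdsmsc}.

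The main obstacle I anticipate is purely bookkeeping rather than analytic: keeping precise track of which independent copy of the semicircular system each derivative introduces, of the exact exponential weights $e^{(\widetilde t_h \pm \cdots)/2}$ generated when differentiating the interpolation formula (these come from $\tfrac{d}{dt}$ of $(e^{-\widetilde t_{l-1}}-e^{-\widetilde t_l})^{1/2}$ and of the prefactor $e^{\widetilde t_{m-1}/2}$, and must be reconciled with the clean-looking coefficients in the statement), and of the ordering of the time variables --- ensuring the newly inserted $t_{n+1},t_{n+2}$ sit correctly relative to the $\widetilde t_j$, which is exactly what forces the slab decomposition \eqref{3fullop} and the index functors $F_s,\widetilde F_s,G^+$. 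A secondary technical point is justifying the interchange of expectation, the $t$-integral, and differentiation, and the vanishing of boundary terms at $t_{n+1}=\infty$; these rest on the integrability hypothesis $\E[|\ts_N(Q(X^N))e^{-\lambda N\tr_N(V)}|]<\infty$ imposed on $V$ together with polynomial growth bounds on the semicircular moments, so they are routine dominated-convergence arguments once the structure is in place.
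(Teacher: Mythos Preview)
Your outline is essentially the paper's approach: interpolate in $t_{n+1}$, differentiate, apply Gaussian integration by parts (for the $X^N$ part) and free Schwinger--Dyson (Proposition~\ref{3SDE}, for the semicircular part), extract $\nabla_V$ from the potential term, and then handle the remainder by a second interpolation in $t_{n+2}$ producing $L$.

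The one place your description goes wrong is the origin of the $N^{-2}$ term. You write that after the first Gaussian IBP the self-interaction piece ``still contains a bare $X_j^N$ \dots so one must integrate by parts a second time.'' There is no such bare $X_j^N$, and no second Gaussian IBP occurs in the argument. What actually happens is that after applying \emph{both} Gaussian IBP and free Schwinger--Dyson at the first stage, the two self-interaction contributions almost coincide: Gaussian IBP produces $\frac{1}{N}\sum_{u,v}\tau_N(E_{u,v}\,A\,E_{v,u}\,B)$ while free Schwinger--Dyson produces $\tau_N(A)\,\tau_N(B)$, where $A\otimes B$ ranges over the simple tensors of $\partial_{i,k}\D_{i,h}P$. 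Their \emph{difference} $\Lambda_N$ --- a covariance-type quantity --- is the remainder. The second interpolation in $t_{n+2}$ is an interpolation of $\Lambda_N$ between the endpoint where $A,B$ are evaluated in the same semicircular family (single-trace form) and the endpoint where they are evaluated in freely independent copies (so the trace factorizes). Differentiating in $t_{n+2}$ and applying Proposition~\ref{3SDE} to the \emph{new} semicircular variables yields the cross-derivatives $[\partial_{j,I}\otimes\partial_{j,J}]$; the factor $1/N^2$ then arises because the resulting product of two $\tau_N$'s is rewritten as a single $\tau_N$ with $E_{v,v},E_{u,u}$ insertions (costing $1/N$), and this combines with the $1/N$ already present in the definition of $\Lambda_N$ after summing over $u,v$. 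So your overall architecture is right, but if you follow the mechanism you describe for this step --- hunting for a leftover $X_j^N$ to integrate by parts --- you will not find it and will not arrive at the correct computation.
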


\begin{proof}
	
	Since there is a bijection between $J_H$ and $J_{\{H,G^+\}}$, one can interpolate $X^{N,T_n}$ and $x^{N,T_n}$ with $G^+(X)^{N,\{T_n,\widetilde{t}_n+t\}}$, where $G^+(X)$ is as in Definition \ref{sidcosc}. Thus one has
	\begin{align}
		\label{kdscsl}
		&\E\left[ \tau_N\left(P\left(X^{N,T_{n}}\right)\right)\ e^{-N\tr_N(V(X^N))} \right] \nonumber \\
		= &\ \tau\left(P(x^{T_n})\right)\ \E\left[ e^{-N\tr_N(V(X^N))} \right] \\
		&- \int_{0}^{\infty} \E\left[ \frac{d}{dt}\tau_N\left(P\left(G^+(X)^{N,\{T_n,\widetilde{t}_n+t\}}\right)\right)\ e^{-N\tr_N(V(X^N))} \right] dt. \nonumber
	\end{align}
	Moverover, one has 
	\begin{align*}
		&\frac{d}{dt}\tau_N\left(P\left(G^+(X)^{N,\{T_n,\widetilde{t}_n+t\}}\right)\right) \\
		&= \frac{e^{-(t+\widetilde{t}_{n})/2}}{2} \sum_{1\leq i\leq d} \sum_{0\leq h\leq n}\ e^{\widetilde{t}_h/2}\ \tau_N\left(\D_{i,h} P\left(G^+(X)^{N,\{T_n,\widetilde{t}_n+t\}}\right) \left( \frac{e^{-t/2}x_i}{(1-e^{-t})^{1/2}} - X_i^N\right) \right).
	\end{align*}
	
	\noindent We also have thanks to Proposition \ref{3SDE},
	\begin{align*}
		&e^{-t/2}(1-e^{-t})^{-1/2} \tau_N\left(\D_{i,h} P\left(G^+(X)^{N,\{T_n,\widetilde{t}_n+t\}}\right) x_i\right) \\
		&= e^{-(t+\widetilde{t}_{n})/2} \sum_{0\leq k\leq n}\ e^{\widetilde{t}_k/2}\ \tau_N\otimes\tau_N\left(\partial_{i,k}\D_{i,h} P\left(G^+(X)^{N,\{T_n,\widetilde{t}_n+t\}}\right)\right),
	\end{align*}
	and with the notation $\#$ as in Definition \ref{sknclks}, one has
	\begin{align}
		\label{lkmsdvmdss}
		&\E\left[ \tau_N\left(\D_{i,h} P\left(G^+(X)^{N,\{T_n,\widetilde{t}_n+t\}}\right) X_i^N \right)\ e^{-N\tr_N(V(X^N))} \right] \\
		&= e^{-(t+\widetilde{t}_{n})/2} \sum_{0\leq k\leq n}\ e^{\widetilde{t}_k/2}\ \E\left[ \frac{1}{N}\sum_{1\leq u,v\leq N} \tau_N\left(E_{u,v}\ \partial_{i,k}\D_{i,h} P\left(G^+(X)^{N,\{T_n,\widetilde{t}_n+t\}}\right)\# E_{v,u}\right) e^{-N\tr_N(V(X^N))} \right] \nonumber\\
		&\ \ \  - \E\left[ \tau_N\left(\D_{i,h} P\left(G^+(X)^{N,\{T_n,\widetilde{t}_n+t\}}\right) \D_iV(X^N) \right)\ e^{-N\tr_N(V(X^N))} \right]. \nonumber
	\end{align}
	Thus by plugging in the last three equations in Equation \eqref{kdscsl}, we get that
	\begin{align*}
		&\E\left[ \tau_N\left(P\left(X^{N,T_{n}}\right)\right)\ e^{-N\tr_N(V(X^N))} \right] \\
		= &\ \tau\left(P(x^{T_n})\right)\ \E\left[ e^{-N\tr_N(V(X^N))} \right] \\
		&- \int_{0}^{\infty} \frac{e^{-(t+\widetilde{t}_{n})/2}}{2} \sum_{1\leq i\leq d} \sum_{0\leq h\leq n}\ e^{\widetilde{t}_h/2}\ \E\left[ \tau_N\left(\D_{i,h} P\left(G^+(X)^{N,\{T_n,\widetilde{t}_n+t\}}\right)\D_iV(X^N) \right)\ e^{-N\tr_N(V(X^N))} \right] dt \\
		&+ \int_{0}^{\infty} \frac{e^{-t-\widetilde{t}_{n}}}{2} \sum_{1\leq i\leq d} \sum_{0\leq h,k \leq n} e^{(\widetilde{t}_h+\widetilde{t}_k)/2} \E\Bigg[  \Bigg(\frac{1}{N}\sum_{1\leq u,v\leq N} \tau_N\left(E_{u,v}\ \partial_{i,k}\D_{i,h} P\left(G^+(X)^{N,\{T_n,\widetilde{t}_n+t\}}\right)\# E_{v,u}\right) \\
		&\quad\quad\quad\quad\quad\quad\quad\quad\quad\quad\quad\quad\quad- \tau_N\otimes\tau_N\left(\partial_{i,k}\D_{i,h} P\left(G^+(X)^{N,\{T_n,\widetilde{t}_n+t\}}\right)\right) \Bigg) e^{-N\tr_N(V(X^N))} \Bigg] dt.
	\end{align*}
	Thus after a change of variables, and renaming $t$ into $t_{n+1}$, we get that
	\begin{align}
		\label{lskdmcosmcosancdoakc}
		&\E\left[ \tau_N\left(P\left(X^{N,T_{n}}\right)\right)\ e^{-N\tr_N(V(X^N))} \right] \\
		= &\ \tau\left(P(x^{T_n})\right)\ \E\left[ e^{-N\tr_N(V(X^N))} \right] \nonumber\\
		&- \int_{\widetilde{t}_n}^{\infty} \E\left[ \tau_N\left(\nabla_V^{H,T_{n+1}}(P) \left(X^{N,\{T_{n},t_{n+1}\}}\right)\right)\ e^{-N\tr_N(V(X^N))} \right] dt_{n+1} \nonumber \\
		&+ \int_{\widetilde{t}_{n}}^{\infty} \frac{e^{-t_{n+1}}}{2} \sum_{1\leq i\leq d} \sum_{0\leq h,k \leq n} e^{(\widetilde{t}_h+\widetilde{t}_k)/2} \E\Bigg[ \Bigg(\frac{1}{N}\sum_{1\leq u,v\leq N} \tau_N\left(E_{u,v}\ \partial_{i,k}\D_{i,h} P\left(G^+(X)^{N,\{T_n,t_{n+1}\}}\right)\# E_{v,u}\right) \nonumber\\
		&\quad\quad\quad\quad\quad\quad\quad\quad\quad\quad\quad\quad\quad- \tau_N\otimes\tau_N\left(\partial_{i,k}\D_{i,h} P\left(G^+(X)^{N,\{T_n,t_{n+1}\}}\right)\right) \Bigg) e^{-N\tr_N(V(X^N))} \Bigg] dt_{n+1}. \nonumber
	\end{align}
	
	\noindent Thus in order to get Equation \eqref{sicdsmsc}, it only remains to study the last two lines of the previous equation. In order to do so, given polynomials $A$ and $B$, let us now study the quantity 
	\begin{align*}
		\Lambda_N &\deq \frac{1}{N}\sum_{1\leq u,v\leq N} \tau_N\left(E_{u,v} A\left(G^+(X)^{N,\{T_n,t_{n+1}\}}\right) E_{v,u} B\left(G^+(X)^{N,\{T_n,t_{n+1}\}}\right)\right) \\
		&\quad- \tau_N\left(A\left(G^+(X)^{N,\{T_n,t_{n+1}\}}\right)\right) \tau_N\left( B\left(G^+(X)^{N,\{T_n,t_{n+1}\}}\right)\right).
	\end{align*}
	Note that by traciality,
	\begin{align*}
		\tau_N\left(A\left(G^+(X)^{N,\{T_n,t_{n+1}\}}\right)\right) &= \sum_{v=1}^N \tau_N\left( A\left(G^+(X)^{N,\{T_n,t_{n+1}\}}\right) E_{v,v} \right) \\
		&= \sum_{v=1}^N \frac{1}{N}\tau\left( e_v^*A\left(G^+(X)^{N,\{T_n,t_{n+1}\}}\right) e_v \right).
	\end{align*}
	In particular, the quantity $e_v^*A\left(G^+(X)^{N,\{T_n,t_{n+1}\}}\right) e_v$ is an element of $\A$ whereas $A\left(G^+(X)^{N,\{T_n,t_{n+1}\}}\right)$ is an element of $\A_N$. Hence $\Lambda_N$ is equal to
	\begin{align*}
		&\frac{1}{N}\sum_{1\leq u,v\leq N} \Bigg( \tau_N\left(E_{u,v} A\left(G^+(X)^{N,\{T_n,t_{n+1}\}}\right) E_{v,u} B\left(G^+(X)^{N,\{T_n,t_{n+1}\}}\right)\right) \\
		&\quad\quad\quad\quad\quad\quad - \frac{1}{N}\tau\left( e_v^*A\left(G^+(X)^{N,\{T_n,t_{n+1}\}}\right) e_v \right) \tau\left( e_u^*B\left(G^+(X)^{N,\{T_n,t_{n+1}\}}\right) e_u\right) \Bigg) \\
		&= \frac{1}{N}\sum_{1\leq u,v\leq N} \tau_N\left(E_{u,v} A\left(F_1^1(X)^{N,\{T_n,t_{n+1},0\}}\right) E_{v,u} B\left(F_1^2(X)^{N,\{T_n,t_{n+1},0\}}\right)\right) \\
		&\quad\quad\quad\quad\quad\quad- \tau_N\left( E_{u,v}A\left(F_{n+1}^1(X)^{N,\{T_n,t_{n+1},t_{n+1}\}}\right) E_{v,u} B\left(F_{n+1}^2(X)^{N,\{T_n,t_{n+1},t_{n+1}\}}\right) \right) \\
		&= \frac{1}{N}\sum_{1\leq u,v\leq N} \sum_{1\leq s\leq n+1} \tau_N\left(E_{u,v} A\left(F_s^1(X)^{N,\{T_n,t_{n+1},\widetilde{t}_{s-1}\}}\right) E_{v,u} B\left(F_s^2(X)^{N,\{T_n,t_{n+1},\widetilde{t}_{s-1}\}}\right)\right) \\
		&\quad\quad\quad\quad\quad\quad\quad\quad\quad\quad- \tau_N\left( E_{u,v}A\left(F_{n+1}^1(X)^{N,\{T_n,t_{n+1},\widetilde{t}_{s}\}}\right) E_{v,u} B\left(F_{n+1}^2(X)^{N,\{T_n,t_{n+1},\widetilde{t}_{s}\}}\right) \right) \\
		&= - \frac{1}{N}\sum_{1\leq u,v\leq N} \sum_{1\leq s\leq n+1} \int_{\widetilde{t}_{s-1}}^{\widetilde{t}_{s}} \frac{d}{dt}\tau_N\left(E_{u,v} A\left(F_s^1(X)^{N,\{T_n,t,t_{n+1}\}}\right) E_{v,u} B\left(F_s^2(X)^{N,\{T_n,t_{n+1},t\}}\right)\right) dt. 
	\end{align*}
	
	\noindent Besides, for $t\in [\widetilde{t}_{s-1},\widetilde{t}_{s}]$ we have that
	\begin{align}
		\label{cwooksmcowmcow}
		&\frac{d}{dt}\tau_N\left(E_{u,v} A\left(F_s^1(X)^{N,\{T_n,t_{n+1},t\}}\right) E_{v,u} B\left(F_s^2(X)^{N,\{T_n,t_{n+1},t\}}\right)\right) \\
		&= \frac{1}{2}\sum_{1\leq j\leq d} \sum_{0\leq x\leq s-1} e^{\widetilde{t}_x/2-t} \sum_{A=A_1 X_{j,I} A_2,\ I\in J_H^x} \tau_N\Bigg( \left(\frac{x_j^{I_s+c_{J_H}}}{(e^{-\widetilde{t}_{s-1}}-e^{-t})^{1/2}} - \frac{x_j^{I_s}}{(e^{-t}-e^{-\widetilde{t}_{s}})^{1/2}} \right) \nonumber\\ 
		&\quad\quad\quad\quad\quad\quad\quad A_2\left(F_s^1(X)^{N,\{T_n,t_{n+1},t\}}\right) E_{v,u} B\left(F_s^2(X)^{N,\{T_n,t_{n+1},t\}}\right) E_{u,v} A_1\left(F_s^1(X)^{N,\{T_n,t_{n+1},t\}}\right)\Bigg) \nonumber\\
		&\ + \frac{1}{2}\sum_{1\leq j\leq d} \sum_{0\leq x\leq s-1} e^{\widetilde{t}_x/2-t} \sum_{B=B_1 X_{j,I} B_2,\ I\in J_H^x} \tau_N\Bigg( \left(\frac{x_j^{I_s+2c_{J_H}}}{(e^{-\widetilde{t}_{s-1}}-e^{-t})^{1/2}} - \frac{x_j^{I_s}}{(e^{-t}-e^{-\widetilde{t}_{s}})^{1/2}} \right) \nonumber\\ 
		&\quad\quad\quad\quad\quad\quad\quad  B_2\left(F_s^2(X)^{N,\{T_n,t_{n+1},t\}}\right) E_{u,v} A\left(F_s^1(X)^{N,\{T_n,t_{n+1},t\}}\right) E_{v,u} B_1\left(F_s^2(X)^{N,\{T_n,t_{n+1},t\}}\right)\Bigg). \nonumber
	\end{align}
	Then thanks again to Proposition \ref{3SDE},
	\begin{align*}
		&\tau_N\Bigg( \frac{x_j^{I_s}}{(e^{-t}-e^{-\widetilde{t}_{s}})^{1/2}} A_2\left(F_s^1(X)^{N,\{T_n,t_{n+1},t\}}\right) E_{v,u} B\left(F_s^2(X)^{N,\{T_n,t_{n+1},t\}}\right) E_{u,v} A_1\left(F_s^1(X)^{N,\{T_n,t_{n+1},t\}}\right)\Bigg) \\
		&= \sum_{0\leq y\leq s-1} e^{\widetilde{t}_y/2} \sum_{\substack{A_2=A_3 X_{j,J} A_4,\\ J\in J_H^y \text{ such that } I_s=J_s}} \tau_N\Big(A_3\left(F_s^1(X)^{N,\{T_n,t_{n+1},t\}}\right)\Big) \tau_N\Big( A_4\left(F_s^1(X)^{N,\{T_n,t_{n+1},t\}}\right) \\
		&\quad\quad\quad\quad\quad\quad\quad\quad\quad\quad\quad\quad\quad\quad\quad  E_{v,u} B\left(F_s^2(X)^{N,\{T_n,t_{n+1},t\}}\right) E_{u,v} A_1\left(F_s^1(X)^{N,\{T_n,t_{n+1},t\}}\right)\Big) \\
		&\ + \sum_{0\leq y\leq s-1} e^{\widetilde{t}_y/2} \sum_{\substack{A_1=A_3 X_{j,J} A_4,\\ J\in J_H^y \text{ such that } I_s=J_s}} \tau_N\Big(A_2\left(F_s^1(X)^{N,\{T_n,t_{n+1},t\}}\right) E_{v,u} B\left(F_s^2(X)^{N,\{T_n,t_{n+1},t\}}\right) \\
		&\quad\quad\quad\quad\quad\quad\quad\quad\quad\quad\quad\quad\quad\quad\quad  E_{u,v} A_3\left(F_s^1(X)^{N,\{T_n,t_{n+1},t\}}\right)\Big) \tau_N\Big(A_4\left(F_s^1(X)^{N,\{T_n,t_{n+1},t\}}\right)\Big) \\
		&\ + \sum_{0\leq y\leq s-1} e^{\widetilde{t}_y/2} \sum_{\substack{B=B_1 X_{j,J} B_2,\\ J\in J_H^y \text{ such that } I_s=J_s}} \tau_N\Big( A_2\left(F_s^1(X)^{N,\{T_n,t_{n+1},t\}}\right) E_{v,u} B_1\left(F_s^2(X)^{N,\{T_n,t_{n+1},t\}}\right)\Big) \\
		&\quad\quad\quad\quad\quad\quad\quad\quad\quad\quad\quad\quad\quad\quad\quad \tau_N\Big(B_2\left(F_s^2(X)^{N,\{T_n,t_{n+1},t\}}\right) E_{u,v} A_1\left(F_s^1(X)^{N,\{T_n,t_{n+1},t\}}\right)\Big),
	\end{align*}
	and similarly
	\begin{align*}
		&\tau_N\Bigg( \frac{x_j^{I_s+c_{J_H}}}{(e^{-\widetilde{t}_{s-1}}-e^{-t})^{1/2}} A_2\left(F_s^1(X)^{N,\{T_n,t_{n+1},t\}}\right) E_{v,u} B\left(F_s^2(X)^{N,\{T_n,t_{n+1},t\}}\right) E_{u,v} A_1\left(F_s^1(X)^{N,\{T_n,t_{n+1},t\}}\right)\Bigg) \\
		&= \sum_{0\leq y\leq s-1} e^{\widetilde{t}_y/2} \sum_{\substack{A_2=A_3 X_{j,J} A_4,\\ J\in J_H^y \text{ such that } I_s=J_s}} \tau_N\Big(A_3\left(F_s^1(X)^{N,\{T_n,t_{n+1},t\}}\right)\Big) \tau_N\Big( A_4\left(F_s^1(X)^{N,\{T_n,t_{n+1},t\}}\right) \\
		&\quad\quad\quad\quad\quad\quad\quad\quad\quad\quad\quad\quad\quad\quad\quad  E_{v,u} B\left(F_s^2(X)^{N,\{T_n,t_{n+1},t\}}\right) E_{u,v} A_1\left(F_s^1(X)^{N,\{T_n,t_{n+1},t\}}\right)\Big) \\
		&\ + \sum_{0\leq y\leq s-1} e^{\widetilde{t}_y/2} \sum_{\substack{A_1=A_3 X_{j,J} A_4,\\ J\in J_H^y \text{ such that } I_s=J_s}} \tau_N\Big(A_2\left(F_s^1(X)^{N,\{T_n,t_{n+1},t\}}\right) E_{v,u} B\left(F_s^2(X)^{N,\{T_n,t_{n+1},t\}}\right) \\
		&\quad\quad\quad\quad\quad\quad\quad\quad\quad\quad\quad\quad\quad\quad\quad  E_{u,v} A_3\left(F_s^1(X)^{N,\{T_n,t_{n+1},t\}}\right)\Big) \tau_N\Big(A_4\left(F_s^1(X)^{N,\{T_n,t_{n+1},t\}}\right)\Big).
	\end{align*}
	
	\noindent Thus by doing the same computations for the last two lines of \eqref{cwooksmcowmcow} for $t\in [\widetilde{t}_{s-1},\widetilde{t}_{s}]$ we have that
	\begin{align*}
		&\frac{d}{dt}\tau_N\left(E_{u,v} A\left(F_s^1(X)^{N,\{T_n,t_{n+1},t\}}\right) E_{v,u} B\left(F_s^2(X)^{N,\{T_n,t_{n+1},t\}}\right)\right) \\
		&= -\sum_{1\leq j\leq d} \sum_{0\leq x,y\leq s-1} e^{(\widetilde{t}_y+\widetilde{t}_x)/2-t} \sum_{\substack{A=A_1 X_{j,I} A_2, B=B_1 X_{j,J} B_2,\\ I\in J_H^x, J\in J_H^y \text{ such that } I_s=J_s}} \\
		&\quad\quad\quad\quad\quad\quad\quad\quad\quad\quad\quad\quad\quad\quad\quad\quad \tau_N\Big( A_2\left(F_s^1(X)^{N,\{T_n,t_{n+1},t\}}\right) E_{v,u} B_1\left(F_s^2(X)^{N,\{T_n,t_{n+1},t\}}\right)\Big) \\
		&\quad\quad\quad\quad\quad\quad\quad\quad\quad\quad\quad\quad\quad\quad\quad\quad \tau_N\Big(B_2\left(F_s^2(X)^{N,\{T_n,t_{n+1},t\}}\right) E_{u,v} A_1\left(F_s^1(X)^{N,\{T_n,t_{n+1},t\}}\right)\Big) \\
		&= -\sum_{1\leq j\leq d} \sum_{0\leq x,y\leq s-1} e^{(\widetilde{t}_y+\widetilde{t}_x)/2-t} \sum_{\substack{A=A_1 X_{j,I} A_2, B=B_1 X_{j,J} B_2,\\ I\in J_H^x, J\in J_H^y \text{ such that } I_s=J_s}} \\
		&\quad\quad\quad\quad\quad\quad\quad\quad\quad\quad\quad\quad\quad\quad\quad\quad \frac{1}{N}\tau\Big( e_u^* B_1\left(F_s^2(X)^{N,\{T_n,t_{n+1},t\}}\right) A_2\left(F_s^1(X)^{N,\{T_n,t_{n+1},t\}}\right) e_v \Big) \\
		&\quad\quad\quad\quad\quad\quad\quad\quad\quad\quad\quad\quad\quad\quad\quad\quad \frac{1}{N}\tau\Big( e_v^* A_1\left(F_s^1(X)^{N,\{T_n,t_{n+1},t\}}\right) B_2\left(F_s^2(X)^{N,\{T_n,t_{n+1},t\}}\right) e_u \Big) \\
		&= -\frac{1}{N}\sum_{1\leq j\leq d} \sum_{0\leq x,y\leq s-1} e^{(\widetilde{t}_y+\widetilde{t}_x)/2-t} \sum_{\substack{A=A_1 X_{j,I} A_2, B=B_1 X_{j,J} B_2,\\ I\in J_H^x, J\in J_H^y \text{ such that } I_s=J_s}} \\
		&\quad\quad\quad\quad\quad\quad\quad\quad\quad\quad\quad\quad\quad\quad\quad\quad \tau_N\Big( B_1\left(F_s^2(X)^{N,\{T_n,t_{n+1},t\}}\right) A_2\left(F_s^1(X)^{N,\{T_n,t_{n+1},t\}}\right) E_{v,v} \Big) \\
		&\quad\quad\quad\quad\quad\quad\quad\quad\quad\quad\quad\quad\quad\quad\quad\quad\quad\quad A_1\left(\widetilde{F}_s^1(X)^{N,\{T_n,t_{n+1},t\}}\right) B_2\left(\widetilde{F}_s^2(X)^{N,\{T_n,t_{n+1},t\}}\right) E_{u,u} \Big).
	\end{align*}
	
	\noindent Hence we have that,
	\begin{align*}
		\Lambda_N &= \frac{1}{N^2} \sum_{1\leq s\leq n+1} \int_{\widetilde{t}_{s-1}}^{\widetilde{t}_{s}} \sum_{1\leq j\leq d} \sum_{0\leq x,y\leq s-1} e^{(\widetilde{t}_y+\widetilde{t}_x)/2-t} \sum_{\substack{I\in J_H^x, J\in J_H^y \\ \text{ such that } I_s=J_s}} \sum_{A=A_1 X_{j,I} A_2, B=B_1 X_{j,J} B_2} \\
		&\quad\quad \tau_N\Big( A_2\left(F_s^1(X)^{N,\{T_n,t_{n+1},t\}}\right)  \Big) A_1\left(\widetilde{F}_s^1(X)^{N,\{T_n,t_{n+1},t\}}\right) \\
		&\quad\quad\quad\quad\quad\quad\quad\quad\quad\quad\quad\quad\quad\quad\quad\quad\quad\quad\quad\quad B_2\left(\widetilde{F}_s^2(X)^{N,\{T_n,t_{n+1},t\}}\right) B_1\left(F_s^2(X)^{N,\{T_n,t_{n+1},t\}}\right)\Big) dt \\
		&= \frac{1}{N^2} \sum_{1\leq s\leq n+1} \int_{\widetilde{t}_{s-1}}^{\widetilde{t}_{s}} \sum_{1\leq j\leq d} \sum_{0\leq x,y\leq s-1} e^{(\widetilde{t}_y+\widetilde{t}_x)/2-t} \\
		&\quad\quad\quad\quad\quad\quad\quad\quad\quad\quad\quad\quad\quad\quad\sum_{\substack{I\in J_H^x, J\in J_H^y \\ \text{ such that } I_s=J_s}} \tau_N\Big( \Theta^{F_s^1,\widetilde{F}_s^1,\widetilde{F}_s^2,F_s^2}\left[\partial_{j,I}A \otimes \partial_{j,J}B\right]\left(X^{N,\{T_n,t_{n+1},t\}}\right)\Big) dt.
	\end{align*}
	Thus, we have 
	\begin{align*}
		&\int_{\widetilde{t}_{n}}^{\infty} \frac{e^{-t_{n+1}}}{2} \sum_{1\leq i\leq d} \sum_{0\leq h,k \leq n} e^{(\widetilde{t}_h+\widetilde{t}_k)/2} \E\Bigg[ \Bigg(\frac{1}{N}\sum_{1\leq u,v\leq N} \tau_N\left(E_{u,v}\ \partial_{i,k}\D_{i,h} P\left(G^+(X)^{N,\{T_n,t_{n+1}\}}\right)\# E_{v,u}\right) \nonumber\\
		&\quad\quad\quad\quad\quad\quad\quad\quad\quad\quad\quad\quad\quad- \tau_N\otimes\tau_N\left(\partial_{i,k}\D_{i,h} P\left(G^+(X)^{N,\{T_n,t_{n+1}\}}\right)\right) \Bigg) e^{-N\tr_N(V(X^N))} \Bigg] dt_{n+1} \\
		&= \frac{1}{N^2}\int_{\widetilde{t}_{n}}^{\infty} \frac{e^{-t_{n+1}}}{2} \sum_{1\leq i,j\leq d} \sum_{0\leq h,k \leq n} e^{(\widetilde{t}_h+\widetilde{t}_k)/2} \sum_{1\leq s\leq n+1} \int_{\widetilde{t}_{s-1}}^{\widetilde{t}_{s}} \sum_{0\leq x,y\leq s-1} e^{(\widetilde{t}_y+\widetilde{t}_x)/2-t} \sum_{\substack{I\in J_H^x, J\in J_H^y \\ \text{ such that } I_s=J_s}} \\
		&\quad\quad\E\Big[ \tau_N\Big( \Theta^{F_s^1,\widetilde{F}_s^1,\widetilde{F}_s^2,F_s^2}\left[[\partial_{j,I} \otimes \partial_{j,J}]\circ \partial_{i,k}\circ\D_{i,h}P \right] \left(X^{N,\{T_n,t_{n+1},t\}}\right)\Big)  e^{-N\tr_N(V(X^N))} \Big] dt\ dt_{n+1} \\
		&= \frac{1}{N^2}\int_{\widetilde{t}_{n}}^{\infty} \sum_{1\leq s\leq n+1} \int_{\widetilde{t}_{s-1}}^{\widetilde{t}_{s}} \E\Big[ \tau_N\Big(L_s^{H,T_{n+2}}\left(X^{N,\{T_n,t_{n+1},t\}}\right)\Big)  e^{-N\tr_N(V(X^N))} \Big] dt\ dt_{n+1} \\
		&= \frac{1}{N^2}\int_{\widetilde{t}_{n}}^{\infty} \int_{0}^{t_{n+1}} \E\Big[ \tau_N\Big(L^{H,T_{n+2}}\left(X^{N,\{T_n,t_{n+1},t\}}\right)\Big)  e^{-N\tr_N(V(X^N))} \Big] dt\ dt_{n+1}.
	\end{align*}
	Hence the conclusion by renaming $t$ into $t_{n+2}$ in the last line and by plugging this result back into \eqref{lskdmcosmcosancdoakc}.
\end{proof}

In the case where one has a cut-off, we have a very similar theorem with the difference that since one has to use Lemma \ref{ippcut} instead of Gaussian integration by parts in Equation \eqref{lkmsdvmdss}, there is an error term due to the cut-off.

\begin{theorem}
	\label{mainlemma2}
	Let the following objects be given:
	\begin{itemize}
		\item $P\in\A_{d}^H$, $V\in\A_{d}$,
		\item $X^N$ a family of $d$ independent GUE matrices,
		\item $x,x^1,x^2,\dots\in\A_N$ an infinite sequence of free semicircular systems of $d$ variables, freely independent of each other.
	\end{itemize}
	Then with the notations of Theorem \ref{mainlemma},
	\begin{align}
		\label{sicdsmsc2}
		&\E\left[ \tau_N\left(P\left(X^{N,T_n}\right)\right)\ e^{-N\tr_N(V(X^N))} \1_{\forall i, ||X_i^N||\leq K} \right] \nonumber \\
		= &\ \tau\left(P(x^{T_n})\right)\ \E\left[ e^{-N\tr_N(V(X^N))} \1_{\forall i, ||X_i^N||\leq K} \right] \\
		&- \int_{\widetilde{t}_n}^{\infty} \E\left[ \tau_N\left(\nabla_V^{H,T_{n+1}}(P)\left(X^{N,T_{n+1}}\right)\right)\ e^{-N\tr_N(V(X^N))} \1_{\forall i, ||X_i^N||\leq K} \right] dt_{n+1} \nonumber \\
		&+ \frac{1}{N^2} \int_{\widetilde{t}_{n}}^{\infty}\int_0^{t_{n+1}} \E\left[ \tau_N\left(L^{H,T_{n+2}}(P)\left(X^{N,T_{n+2}}\right)\right)\ e^{-N\tr_N(V(X^N))} \1_{\forall i, ||X_i^N||\leq K} \right] dt_{n+2} dt_{n+1} \nonumber \\ \nonumber
		&+ \frac{\mathcal{E}_N}{N^2}\E\left[ e^{- N\tr_N\left(V(X^N)\right)} \1_{\forall i, ||X_i^N||\leq K}\right].
	\end{align}
	where there exist constants $D$ and $C_{M,K}$ for any $M<K$ such that
	$$ \left|\mathcal{E}_N\right| \leq C_{M,K} \norm{P}_{D}\ \mu_{V,K}^N\left(\max_{i\leq i\leq d} \norm{X_i} \geq M\right). $$
\end{theorem}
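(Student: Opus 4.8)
The plan is to rerun the proof of Theorem \ref{mainlemma} almost verbatim, with the indicator $\1_{\forall i, ||X_i^N||\leq K}$ inserted into every expectation, and to locate the unique step that genuinely uses the Gaussian law of $X^N$ rather than treating $X^N$ as a fixed family of operators. That step is the integration by parts producing Equation \eqref{lkmsdvmdss}. Everything else --- the interpolation \eqref{kdscsl} of $X^{N,T_n}$ and $x^{T_n}$ through $G^+(X)^{N,\{T_n,\widetilde{t}_n+t\}}$, the differentiation of $\tau_N(P(G^+(X)^{N,\{T_n,\widetilde{t}_n+t\}}))$, the use of the free Schwinger-Dyson equation (Proposition \ref{3SDE}) applied to the fresh semicircular variable, and the whole computation of $\Lambda_N$ that manufactures $L^{H,T_{n+2}}$ --- only manipulates semicircular variables, matrix units, and $X^N$ viewed as a generic family of operators; in all of it, $e^{-N\tr_N(V(X^N))}$ and $\1_{\forall i, ||X_i^N||\leq K}$ sit passively inside $\E$ and play no role.

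To replace \eqref{lkmsdvmdss} when the cut-off is present I would use a version of Lemma \ref{ippcut} valid for test polynomials with coefficients in the $\mathcal{C}^*$-algebra $\A$ generated by $x,x^1,x^2,\dots$. For such a polynomial $Q$, the change-of-variables argument of Lemma \ref{ippcut} goes through word for word --- the diffeomorphism $\Phi_\varepsilon$ moves only the Hermitian matrices $X_i$, and one uses $\tau_N=\tau(\tfrac1N\tr_N(\cdot))$ in place of $\ts_N$ throughout --- and yields that
\begin{equation*}
\left|\E\left[\left(\frac1N\sum_{u,v}\tau_N\left(E_{u,v}\,\partial_i Q(X^N)\,\#\,E_{v,u}\right)-\tau_N\left(Q(X^N)(\D_i V(X^N)+X_i^N)\right)\right)e^{-N\tr_N(V(X^N))}\1_{\forall i, ||X_i^N||\leq K}\right]\right|
\end{equation*}
is bounded by $\frac{2}{N^2(K-M)}\,\norm{Q}_K\,\mu_{V,K}^N(\max_i\norm{X_i}\geq M)\,\E\left[e^{-N\tr_N(V(X^N))}\1_{\forall i, ||X_i^N||\leq K}\right]$, where now $\norm{Q}_K=\sum_R\norm{c_R}\,K^{\deg R}$ with $\norm{c_R}$ the operator norm of the ($\A$-valued) coefficient of the monomial $R$. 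The one point needing care is that, because $Q$ has non-scalar coefficients, the closing manipulation of Lemma \ref{ippcut} --- multiply by $e_s$ on the right and $e_r^*$ on the left, sum over $r,s$, divide by $N^2$, and apply $\tau$ --- now produces the two-point expression $\frac1N\sum_{u,v}\tau_N(E_{u,v}\,\partial_i Q\,\#\,E_{v,u})$ rather than $\tau_N\otimes\tau_N(\partial_i Q)$; this is precisely the term that appears in \eqref{lkmsdvmdss}, so that identity continues to hold exactly, up to an additive error controlled by the displayed bound.

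Applying this, for each $i\in[1,d]$ and $0\leq h\leq n$, to $\D_{i,h}P(G^+(X)^{N,\{T_n,t_{n+1}\}})$ regarded as a polynomial in $X^N$ with coefficients in $\A$, the proof of Theorem \ref{mainlemma} then produces, in place of \eqref{lskdmcosmcosancdoakc}, the same identity with one extra error term; after the change of variables $t_{n+1}=\widetilde{t}_n+t$, that term carries the same weight $\tfrac12 e^{-t_{n+1}/2}e^{\widetilde{t}_h/2}$ that accompanies the cyclic-derivative contribution there and is integrated over $t_{n+1}\in[\widetilde{t}_n,\infty)$. Since the interpolation coefficients $e^{\widetilde{t}_{m-1}/2}(e^{-\widetilde{t}_{l-1}}-e^{-\widetilde{t}_l})^{1/2}$ and $e^{(\widetilde{t}_{m-1}-\widetilde{t}_n)/2}$ are all $\leq 1$ and the free semicircular variables have norm $\leq 2$, the operator norms of the $X_{i,I}^{N,\{T_n,t_{n+1}\}}$ stay bounded uniformly in $t_{n+1}$ by a constant depending only on $K$ and $n$; hence $\norm{\D_{i,h}P(G^+(X)^{N,\{T_n,t_{n+1}\}})}_K\leq C\,\norm{P}_D$ uniformly in $t_{n+1}$ for suitable constants $C,D$ depending only on $P$ and $K$. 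Combining this with $e^{\widetilde{t}_h/2}\leq e^{\widetilde{t}_n/2}$ and $\int_{\widetilde{t}_n}^\infty e^{-t_{n+1}/2}\,dt_{n+1}=2e^{-\widetilde{t}_n/2}$, the $t_{n+1}$-integral converges and collecting the errors into the last line of \eqref{sicdsmsc2} gives $|\mathcal{E}_N|\leq C_{M,K}\,\norm{P}_D\,\mu_{V,K}^N(\max_i\norm{X_i}\geq M)$, which is the claim.

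The only real work is the bookkeeping for the $\A$-valued extension of Lemma \ref{ippcut} --- in particular verifying that it reproduces the two-point term of \eqref{lkmsdvmdss} and not a product of traces --- together with tracking the exponential weights through the $t_{n+1}$-integral; beyond that the argument is a line-by-line copy of the proof of Theorem \ref{mainlemma}, so I do not expect any genuine obstacle.
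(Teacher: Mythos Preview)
Your proposal is correct and matches the paper's approach exactly: the paper itself gives only a one-sentence sketch --- replace Gaussian integration by parts at \eqref{lkmsdvmdss} by Lemma \ref{ippcut}, which introduces the cut-off error --- and you have correctly identified that step and fleshed out the needed $\A$-valued extension of Lemma \ref{ippcut} (including the point that for non-scalar coefficients the summation over $r,s$ yields the two-point expression $\frac{1}{N}\sum_{u,v}\tau_N(E_{u,v}\,\partial_i Q\,\#\,E_{v,u})$ rather than $\tau_N\otimes\tau_N(\partial_i Q)$, which is precisely the term required in \eqref{lkmsdvmdss}). Your error tracking through the $t_{n+1}$-integral is also sound.
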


\section{The proof of Theorem \ref{maintherorem}}

\subsection{A careful estimate of the error term}

Since we define the coefficients $\alpha_n^V(\lambda,P)$ as a power series in $\lambda$ we need to prove that it actually converges. This turned out to be one of the most difficult parts of the proof. Indeed, it is possible to upper bound the $k$-th coefficient of this power series by an integral in $k$ variables (see Lemma \ref{sdlmcldsfmvs}); but one then has to show that this quantity does not grow too fast. The proof is divided into two parts: in the first lemma we bound this integral by another one which is easier to study, and in the second part we find an analytic function whose coefficients of the power series at zero control the quantities we want to study, hence proving the desired result. Although this strategy does let us conclude, a more direct proof that erases the need for Lemma \refeq{odvfkmssm} and \ref{lkmdsocms}, and directly prove Lemma \ref{sdlmcldsfmvs}, would be welcome. 

\begin{lemma}
	\label{odvfkmssm}
	We fix $k\geq 1,p\geq 2$, then with
	$$ E_{k} = \Big\{ (n_1,\dots,n_k)\in\N^k\ \Big|\ \forall i, 1\leq n_i \leq i\Big\}, $$
	$$ E_{k,p} = \Big\{ (n_1,\dots,n_k)\in\N^k\ \Big|\ \forall i, 1\leq n_i \leq i,\text{ and } \forall j\geq 1, \#\{i\ |\ n_i=j\} \leq p\Big\}, $$
	one has,
	\begin{align}
		\int_{t_n\geq \dots \geq t_1\geq 0} \sum_{(n_1,\dots,n_k)\in E_{k}} e^{-\sum\limits_{1\leq i\leq k} t_i-t_{n_i-1}}&\ dt_1\dots dt_k \\
		&\leq 2^k \int_{t_n\geq \dots \geq t_1\geq 0} \sum_{(n_1,\dots,n_k)\in E_{k,p}} e^{-\sum\limits_{1\leq i\leq k} t_i-t_{n_i-1}}\ dt_1\dots dt_k. \nonumber
	\end{align}
\end{lemma}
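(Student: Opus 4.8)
Here is how I would attack the final statement (Lemma~\ref{odvfkmssm}).

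\medskip

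\emph{Reduction to a pointwise bound.} The plan is to prove an inequality between the two \emph{integrands} and then integrate. Set $t_0:=0$ and, for $1\le l\le k$, let $r_l:=e^{-(t_l-t_{l-1})}\in(0,1]$. For any $\mathbf n=(n_1,\dots,n_k)$ with $n_i\le i$ one has $t_i-t_{n_i-1}=\sum_{l=n_i}^{i}(t_l-t_{l-1})$, hence $e^{-\sum_i(t_i-t_{n_i-1})}=\prod_{i=1}^{k}\prod_{l=n_i}^{i}r_l$. In the sum over $E_k$ every $n_i$ runs freely over $\{1,\dots,i\}$, so that sum factorises as
\[
\sum_{\mathbf n\in E_k}\prod_{i=1}^k\prod_{l=n_i}^{i}r_l=\prod_{i=1}^{k}\Bigl(\sum_{n=1}^{i}\prod_{l=n}^{i}r_l\Bigr)=\Bigl(\prod_{i=1}^k r_i\Bigr)\prod_{i=1}^{k}Q_i,\qquad Q_i:=\sum_{n=1}^{i}\prod_{l=n}^{i-1}r_l .
\]
After cancelling $\prod_i r_i>0$, it therefore suffices to prove that
\[
\prod_{i=1}^{k}Q_i\ \le\ 2^{k}\sum_{\mathbf n\in E_{k,p}}\ \prod_{i=1}^{k}\prod_{l=n_i}^{i-1}r_l
\]
holds for every $r\in(0,1]^{k}$; call this inequality $(\star)$. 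Integrating $(\star)$ over $\{t_k\ge\cdots\ge t_1\ge0\}$ (equivalently, after the change of variables $s_l=t_l-t_{l-1}$, over $\R_+^{k}$) then yields the lemma.

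\medskip

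\emph{Induction on $k$.} The case $k=1$ is immediate ($Q_1=1$, right side $=2$). For the step, split $\mathbf n=(\mathbf n',n_k)$ with $\mathbf n'\in E_{k-1}$, apply the induction hypothesis to $\mathbf n'$ in the variables $r_1,\dots,r_{k-1}$, and note $Q_k=\sum_{n=1}^{k}g(n)$ with $g(n):=\prod_{l=n}^{k-1}r_l$. Since the pairs $(\mathbf n',n_k)$ with $\mathbf n'\in E_{k-1,p}$ and $n_k\in G(\mathbf n'):=\{\,n\le k:\ \#\{i<k:n'_i=n\}<p\,\}$ are exactly the elements of $E_{k,p}$ (and $g(n)\prod_{i<k}\prod_{l=n'_i}^{i-1}r_l=\prod_{i\le k}\prod_{l=n_i}^{i-1}r_l$), the step reduces to the one-variable claim: for every $\mathbf n'\in E_{k-1,p}$,
\[
\sum_{n=1}^{k}g(n)\ \le\ 2\sum_{n\in G(\mathbf n')}g(n);
\]
call this $(\star\star)$.

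\medskip

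\emph{Proof of $(\star\star)$.} Put $B:=\{1,\dots,k\}\setminus G(\mathbf n')$; since $\mathbf n'\in E_{k-1,p}$, these are precisely the values used \emph{exactly} $p$ times by $\mathbf n'$. Write $B=\{v_1<\cdots<v_b\}$. A pigeonhole argument on the index set gives $v_j\le k-(b-j+1)p$ for each $j$: the values $v_j,\dots,v_b$ each occupy $p$ of the indices in $\{1,\dots,k-1\}$, all of which are $\ge$ their own value and hence $\ge v_j$; these $p$-element sets are disjoint, so $(b-j+1)p\le k-v_j$. As $r_l\le1$, the sequence $g$ is nondecreasing, whence
\[
\sum_{n\in B}g(n)=\sum_{j=1}^{b}g(v_j)\le\sum_{j=1}^{b}g\bigl(k-(b-j+1)p\bigr)=\sum_{m=1}^{b}g(k-mp).
\]
Splitting the $bp$ largest indices $\{k-bp+1,\dots,k\}$ (legitimate since $bp\le k-1$) into $b$ consecutive length-$p$ blocks and using monotonicity again, $\sum_{n=1}^{k}g(n)\ge\sum_{m=1}^{b}\sum_{n=k-mp+1}^{k-(m-1)p}g(n)\ge p\sum_{m=1}^{b}g(k-mp)\ge p\sum_{n\in B}g(n)$. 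Since $p\ge2$ this gives $\sum_{n\in B}g(n)\le\tfrac12\sum_{n=1}^{k}g(n)$, so $\sum_{n=1}^{k}g(n)=\sum_{n\in G(\mathbf n')}g(n)+\sum_{n\in B}g(n)\le2\sum_{n\in G(\mathbf n')}g(n)$, which is $(\star\star)$.

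\medskip

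The main obstacle is $(\star\star)$ — more precisely, the observation that the ``saturated'' values of $\mathbf n'$ are forced by counting to lie far enough to the left (the bound $v_j\le k-(b-j+1)p$) that the nondecreasing weight $g$ carries at most a fraction $1/p\le1/2$ of its total mass on them. The reduction to $(\star)$, the arithmetic of the inductive step, and the final integration are all routine.
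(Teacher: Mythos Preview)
Your proof is correct and takes a genuinely different route from the paper's. The paper first evaluates both integrals explicitly (via the change of variables $s_l=t_l-t_{l-1}$) as
\[
\sum_{\mathbf n}\ \prod_{j=1}^{k}\frac{1}{\#\{i:\ n_i\le j\le i\}},
\]
observes that this product is monotone under componentwise increase of $\mathbf n$, and then spends most of the argument building an explicit map $f:E_k\to E_{k,p}$ which is coordinatewise nondecreasing and has fibers of size at most $2^{k}$. The construction of $f$ is done inductively with an auxiliary ``bumping'' rule (sequences $(u_l)$ and $(v_l)$), and one must verify separately that $f$ is well defined and that each one-step extension is at most $2$-to-$1$.

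You instead prove the stronger \emph{pointwise} inequality on the integrands, by induction on $k$. Your inductive step reduces to the scalar inequality $\sum_{n\le k}g(n)\le 2\sum_{n\in G(\mathbf n')}g(n)$, which you dispatch with the clean pigeonhole bound $v_j\le k-(b-j+1)p$ and monotonicity of $g$. This avoids both the explicit integral evaluation and the combinatorial bookkeeping of $f$; it is shorter and more transparent. The paper's approach has the minor side benefit of exhibiting an explicit $2^{k}$-to-$1$ correspondence between $E_k$ and $E_{k,p}$, but that is not needed for the lemma itself. Interestingly, the counting observation underlying your $(\star\star)$ --- that the saturated values must sit far enough to the left --- is essentially the same idea the paper uses to check that its map $f$ is well defined; you simply use it more directly.
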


\begin{proof}
	Thanks to the change of variables $(t_1,\dots,t_n)\mapsto (t_1,t_1+t_2,\dots,t_1+\dots+t_n)$, one has that 
	\begin{align}
		\label{socdosmccca}
		&\int_{t_n\geq \dots \geq t_1\geq 0} \sum_{(n_1,\dots,n_k)\in E_{k}} e^{-\sum\limits_{1\leq i\leq k} t_i-t_{n_i-1}}\ dt_1\dots dt_k \nonumber \\
		&= \int_{\R_+^k} \sum_{(n_1,\dots,n_k)\in E_{k}} e^{-\sum\limits_{1\leq i\leq k} t_{n_i}+\dots+t_i}\ dt_1\dots dt_k  \\
		&=\sum_{(n_1,\dots,n_k) \in E_k} \prod_{1\leq j\leq k} \frac{1}{\#\{ i\ |\ n_i \leq j \leq i \}}. \nonumber
	\end{align}
	We write $(m_1,\dots,m_k)\geq (n_1,\dots,n_k)$ if for any $i$, $m_i\geq n_i$, then
	\begin{equation}
		\label{smcsomc}
		\prod_{1\leq j\leq k} \frac{1}{\#\{ i\ |\ m_i \leq j \leq i \}} \geq \prod_{1\leq j\leq k} \frac{1}{\#\{ i\ |\ n_i \leq j \leq i \}}.
	\end{equation}
	
	\noindent Next we define by induction the function $f:\cup_{k\geq 1} E_k \to \cup_{k\geq 1} E_{k,p}$ which maps $E_k$ to $E_{k,p}$,
	\begin{itemize}
		\item for $k=1$, $f(1)= (1)$,
		\item for $k>1$, we define $f(n_1,\dots,n_k) = (f(n_1,\dots,n_{k-1}),m_k)$ where we pick $m_k$ with the following process. We set $(m_1,\dots,m_{k-1})= f(n_1,\dots,n_{k-1})$, then if $\#\{i<k\ |\ m_i=n_k\} < p$, we fix $m_k=n_k$. Else we define the following sequence for $l\in [1,n_k]$, 
		\begin{equation}
			\label{skdjncsknd}
			u_{n_k}=1,\quad \forall l<n_k,\ u_l = \left\{ \begin{array}{ll}
				u_{l+1} - 1 & \mbox{if } \#\{i<k\ |\ m_i=l\} < p \\
				u_{l+1} + 1 & \mbox{else}
			\end{array}
			\right. ,
		\end{equation}
		as well as
		$$r = \max_{l\in [1,n_k]} u_l.$$
		Then with,
		$$ v_{0}=n_k,\quad \forall l>0,\ v_l = \min\{ n>v_{l-1}\ |\ \#\{j<k\ |\ m_j=n\} < p \},$$
		we set $ m_k = v_{r}$.
	\end{itemize}

	\begin{rem}
		The definition of the function $f$ might seem technical but it describes a very natural process. First one can view the elements of $E_k$ in the following way. Given a matrix of size $k$ we want to pick a coefficient in each row under the condition that on the row $i$ the element we pick is in the first $i$ columns. Then by choosing the coefficients $(i,n_i)$ for $i$ from $1$ to $k$ we define an element of $E_k$ with the list $(n_1,\dots,n_k)$. 
		
		Thus, in the definition of $f$, in order to pick $m_k$ given $(m_1,\dots,m_{k-1})$, we proceed in the following way. First we check if we have chosen less than $p$ indices in the column $n_k$, i.e if $\#\{i<k\ |\ m_i=n_k\} < p$; if this is the case then we simply set $m_k=n_k$. Otherwise, we say that the column $n_k$ is full. In which case, one  wants to pick a column $m_k$ after the $n_k$-th one which is not full. But we also want to do so in a way that is almost injective, i.e. such that given $(m_1,\dots,m_{k-1})$, for any $\widetilde{n}_k\in[1,k]$, the only way for us to set $\widetilde{m}_k=m_k$ is if $\widetilde{n}_k=m_k \text{ or } n_k$. To do so, we first look at how many columns before the $n_k$-th one are full, then the integer $r$ that we define above tells us how many columns ahead we need to go in order to pick $m_k$. Finally the sequence $(v_l)_{0\leq l\leq r}$ gives us the $r$-th non-full column after the $n_k$-th one and we finally take $m_k=v_r$.
		
		Note that those properties of the function $f$ described above are not trivial, we prove them in the rest of the proof. 
	\end{rem}

	First and foremost let us explain why this process is well-defined, i.e. why if $\#\{i<k\ |\ m_i=n_k\} = p$ then the length of the sequence $v$ is larger than $r$ and thus why one can pick $v_r$ smaller or equal to $k$. If the maximum $r$ of the sequence $(u_l)_{l\in [1,n_k]}$ is reached at $L$, then one has $L=n_k-(r-1)-2s$ where $s$ is the number of indices $l\in [L,n_k]$ such that $\#\{i<k\ |\ m_i=l\} < p$. Thus there are at least $p(r+s)$ indices $i<k$ such that $m_i\in[L,n_k]$, however since $m_i\leq i$, there can be at most $k-L$ indices $i<k$ such that $m_i\geq L$, consequently with $t$ the number of indices $i<k$ such that $m_i>n_k$,
	$$ k-n_k+r+2s -1 = k-L \geq t+ p(r+s).$$
	And hence since $p\geq 2$, one has that $k-n_k\geq 1+t+(p-1)r$. Thus if $q$ is the number of indices $l>n_k$ such that $\#\{j<k\ |\ m_j=l\} = p$, one has that $t\geq qp$, and thus $k-n_k\geq 1+qp+(p-1)r$. Consequently,
	$$k-n_k \geq 1+r+q.$$
	In particular one can find $r$ indices larger than $n_k$ such that their respective columns are not full, hence $v_r$ is well-defined.
	
	Next we define the map $h_{n_1,\dots,n_{k-1}}: [1,k]\to [1,k]$ that to an integer $n$ associates the last component of the vector $f(n_1,\dots,n_{k-1},n)$ (i.e. the integer $m_k$ that the process above yields), then while $h_{n_1,\dots,n_{k-1}}$ is not injective, for any $n\in [1,k]$, $h_{n_1,\dots,n_{k-1}}^{-1}(\{n\})$ has at most two elements. Indeed if $n$ is such that $\#\{j<k\ |\ m_j=n\} =p$, then $h_{n_1,\dots,n_{k-1}}^{-1}(\{n\})$ is empty, else if $n$ is such that $\#\{j<k\ |\ m_j=n\} <p$, then $n\in h_{n_1,\dots,n_{k-1}}^{-1}(\{n\})$. Besides, if $a<b<n$ are such that $h_{n_1,\dots,n_{k-1}}(a)=h_{n_1,\dots,n_{k-1}}(b)=n$, then let us denote $r_a$ and $r_b$ the quantities which appears in the previous process, then with $q_1$ (respectively $q_2$) the number of indices $l\in [a+1,b]$ (respectively $[b+1,n]$) such that $\#\{i<k\ |\ m_i=l\} =p$, then 
	$$ n = a+r_a+q_1+q_2 = b+r_b+q_2.$$
	Thus $r_b = r_a + a-b+q_1$. However, if $(u_l^a)_{l\in [1,a]}$ (respectively $(u_l^b)_{l\in [1,b]}$) is the sequence in \eqref{skdjncsknd} associated to $a$ (respectively $b$), then for all $l\in [1,a]$, $u_l^a = u_l^b - u_{a+1}^b = u_l^b - (b-a-2q_1)$, hence by taking the maximum, one has that $r_b \geq r_a +a-b + 2 q_1$. Hence $q_1\leq 0$, yielding a contradiction since $h_{n_1,\dots,n_{k-1}}(b)=n\neq b$, thus $\#\{i<k\ |\ m_i=b\} =p$ and consequently $q_1\geq 1$. Thus we do indeed have that for any $n\in [1,k]$, $h_{n_1,\dots,n_{k-1}}^{-1}(\{n\})$ has at most two elements.
	
	Consequently, we have defined a function $f: E_k\to E_{k,p}$ such that for any $(n_1,\dots,n_k)\in E_k$, $f(n_1,\dots,n_k)\geq (n_1,\dots,n_k)$. Let us fix $(n_1,\dots,n_k)\in E_{k,p}$, then we claim that $f^{-1}(\{(n_1,\dots,n_k)\})$ has at most $2^k$ elements. Indeed by induction, if we assume that there are at most $2^{k-1}$ elements in $f^{-1}(\{(n_1,\dots,n_{k-1})\})$, then since $h_{n_1,\dots,n_{k-1}}^{-1}(\{n_k\})$ has cardinality at most two, then $f^{-1}(\{(n_1,\dots,n_{k})\})$ has cardinal at most $2^k$. Consequently, thanks to Equations \eqref{socdosmccca} and \eqref{smcsomc},
	\begin{align*}
		&\int_{t_n\geq \dots \geq t_1\geq 0} \sum_{(n_1,\dots,n_k)\in E_k} e^{-\sum\limits_{1\leq i\leq k} t_i-t_{n_i-1}}\ dt_1\dots dt_k \\
		&= \sum_{(n_1,\dots,n_k) \in E_k} \prod_{1\leq j\leq k} \frac{1}{\#\{ i\ |\ n_i \leq j \leq i \}} \\
		\leq& \sum_{(n_1,\dots,n_k) \in E_{k,p}} \#f^{-1}(\{(n_1,\dots,n_k)\}) \prod_{1\leq j\leq k} \frac{1}{\#\{ i\ |\ n_i \leq j \leq i \}} \\
		\leq&\ 2^k \sum_{(n_1,\dots,n_k) \in E_{k,p}} \prod_{1\leq j\leq k} \frac{1}{\#\{ i\ |\ n_i \leq j \leq i \}} \\
		=&\ 2^k \int_{t_n\geq \dots \geq t_1\geq 0} \sum_{(n_1,\dots,n_k)\in E_{k,p}} e^{-\sum\limits_{1\leq i\leq k} t_i-t_{n_i-1}}\ dt_1\dots dt_k.
	\end{align*}
\end{proof}

We now need the following lemma. Note that in order to prove it we use results from \cite{guionnet-segala} which studies similar random matrix models. By using their results for a very specific model, one can show that the integral $I_{k,p}$ defined below is bounded by the coefficients of a power series with a non-zero radius of convergence, hence proving Lemma \ref{lkmdsocms}.

\begin{lemma}
	\label{lkmdsocms}
	Fix $p\geq 2,k\geq 1$. Then there exists a constant $K_p$ such that with
	$$ E_{k,p} = \Big\{ (n_1,\dots,n_k)\in\N^k\ \Big|\ \forall i, 1\leq n_i \leq i,\text{ and } \forall j\geq 1, \#\{i\ |\ n_i=j\} \leq p\Big\}, $$
	one has,
	\begin{equation}
		I_{k,p} = \int_{t_n\geq \dots \geq t_1\geq 0} \sum_{(n_1,\dots,n_k)\in E_{k,p}} e^{-\sum\limits_{1\leq i\leq k} t_i-t_{n_i-1}}\ dt_1\dots dt_k \leq  (K_p)^k.
	\end{equation}
\end{lemma}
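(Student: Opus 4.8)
The plan is to dominate $I_{k,p}$ by the $k$-th Taylor coefficient of an explicit analytic function, namely the genus-zero free energy of a well chosen one-matrix model, exploiting that for that model the combinatorial expansion has no cancellation. To prepare for this, I would first record the closed form already produced inside the proof of Lemma \ref{odvfkmssm}: the change of variables $(t_1,\dots,t_k)\mapsto(t_1,t_1+t_2,\dots,t_1+\dots+t_k)$ together with Fubini give
\[
I_{k,p}=\sum_{(n_1,\dots,n_k)\in E_{k,p}}\ \prod_{1\le j\le k}\frac{1}{\#\{i\ |\ n_i\le j\le i\}},
\]
so that $I_{k,p}$ is a weighted count of increasing sequences in which each value is used at most $p$ times; moreover $p\mapsto I_{k,p}$ is non-decreasing, so it suffices to bound $I_{k,p'}$ for one (possibly larger) value $p'\ge p$.

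The heart of the argument is to realise this quantity inside a matrix model. Fix $d=1$ and let $V=X^{2\ell}$ be a convex monomial of large enough degree. Iterating the master equation (Theorem \ref{mainlemma}) with $P=V$ while discarding every term carrying a factor $N^{-2}$ — that is, every occurrence of $L$ — expresses $\alpha_0^V(\lambda,V)$ as a series $\sum_{k\ge0}(-\lambda)^k a_k$ in which $a_k$ is, up to an explicit $C^k$ factor coming from the coefficient of $\D V=2\ell X^{2\ell-1}$, the constants $\tfrac12$ appearing in $\nabla_V$, and the Catalan bounds on the semicircular moments, an integral of exactly the shape defining $I_{k,p'}$ for some $p'$ depending on $\ell$: the restriction $n_i\le i$ encodes that at the $i$-th application of $\nabla_V$ the inner index runs over $\{0,\dots,i-1\}$, and the multiplicity restriction encodes that a variable created at a given level can be hit by the cyclic derivative only as many times as the degree of $\D V$ permits, so that $p'$ can be made $\ge p$ by choosing $\ell$ large. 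Crucially, since all coefficients of $V$ and of $\D V$ and all the weights $e^{(\widetilde t_h-t)/2}$ are non-negative, every term of this expansion is non-negative, so that no cancellation occurs and one obtains a genuine lower bound $a_k\ge c^kI_{k,p'}\ge c^kI_{k,p}$ for an explicit constant $c>0$.

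It then remains to invoke \cite{guionnet-segala}: since $V$ is convex, the free energy $\tfrac1{N^2}\log\E\left[e^{-\lambda N\tr_N(V(X^N))}\right]$ converges as $N\to\infty$ to a function of $\lambda$ analytic in a neighbourhood of $0$, whose derivative is $-\alpha_0^V(\lambda,V)$ in view of \eqref{kdjncoskd2}; hence $\alpha_0^V(\cdot,V)$ is analytic at $0$, so $a_k\le R^{-k}$ for some $R>0$, and together with $a_k\ge c^kI_{k,p}$ this gives $I_{k,p}\le(cR)^{-k}=:K_p^k$. The main obstacle is the bookkeeping behind the middle paragraph: one must verify that the genus-zero iteration for this particular $V$ really produces an integral with a multiplicity bound that can be forced above $p$ and with no spurious extra constraint on the domain, that positivity genuinely rules out every cancellation (so that the lower bound on $a_k$ is valid), and that all per-step constants stay bounded uniformly in $k$ — which is where the combinatorics of Definitions \ref{3biz}--\ref{3biz2} and of the operator $\nabla_V$ come into play; note that only the inequality $a_k\ge c^kI_{k,p}$ is needed, which is far more robust than an exact identity.
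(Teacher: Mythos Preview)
Your overall strategy is exactly the paper's: choose a convex one-matrix potential, show that the $k$-th Taylor coefficient of the genus-zero limit dominates $I_{k,p}$, and conclude by the analyticity result of \cite{guionnet-segala}. The monotonicity in $p$ and the link to the iterated $\nabla_V$ operator are also identified correctly.

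The gap is the lower bound $a_k\ge c^kI_{k,p'}$ for your choice $V=X^{2\ell}$, $P=V$. With $\D V=2\ell X^{2\ell-1}$ a single monomial, every term produced by $\nabla_V^k(P)$ has strictly positive degree in each variable it involves; in the notation of the paper's proof, $P_{(n_1,\dots,n_k)}$ never contains the constant monomial $1$. Positivity then gives only $\tau\big(P_{(n_1,\dots,n_k)}(x^{T_k})\big)\ge 0$, not a uniform positive lower bound. And the trace genuinely depends on $t_1,\dots,t_k$: already for $k=1$ one has $\tau\big((x_{\{1\}}^{T_1})^{2\ell-1}x^{2\ell-1}\big)\to 0$ as $t_1\to\infty$, so you cannot factor a constant out of the integral. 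Your phrase ``positivity rules out every cancellation'' conflates non-negativity with a quantitative lower bound; the ``Catalan bounds'' you mention are upper bounds on moments, not lower bounds on mixed traces. Whether the integrated inequality nonetheless survives is exactly the hard combinatorial point, and you have not addressed it.

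The paper resolves this by taking $V=\alpha_pX^2+\sum_{l=1}^{p+1}X^l$ and $P=\sum_{l=0}^{p}X^l$, so that both $P$ and $\D V$ contain every monomial of degree $0$ through $p$. An induction on $k$ then tracks the full range of degree profiles available in $P_{(n_1,\dots,n_k)}$ and shows that for every $(n_1,\dots,n_k)\in E_{k,p}$ the constant monomial $1$ occurs with coefficient at least $1$; hence $\tau\big(P_{(n_1,\dots,n_k)}(x^{T_k})\big)\ge\tau(1)=1$ pointwise, and the comparison with $I_{k,p}$ is immediate. The presence of the degree-$0$ term in $\D V$ (coming from the linear term $X$ in $V$) is precisely what your monomial potential lacks.
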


\begin{proof}
	Since for any $p$, $E_{k,p}\subset E_{k,p+1}$, one has that $I_{k,p} \leq I_{k,p+1}$, hence we can always assume that $p$ is odd, then we set $V(X) \deq \alpha_p X^2 + \sum_{l=1}^{p+1}X^l$ where we picked $\alpha_p\geq 0$ such that $V$ is strictly convex which one can check by computing the second derivative, split into cases $|X| \leq 1$ and $|X| \geq 1$, and pair the consecutive even and odd powered terms. Then thanks to Klein's lemma (see Lemma 4.4.12 of \cite{alice}), the function $X\in\M_N(\C)_{sa} \mapsto \tr_N(V(X))$ is also convex. In particular, for any $k$ and $\lambda\geq0$, 
	$$ \E\left[\tr_N(X^{2k}) e^{-N\lambda\tr_N(V(X))}\right] <\infty.$$
	By applying Theorem \ref{mainlemma} $n+1$ times, one has that
	\begin{align}
		\label{lskmclkmc}
		&\E\left[\ts_N\left(\sum_{l=0}^{p}X^l\right) e^{-N\lambda\tr_N(V(X))}\right] \nonumber\\
		&= \E\left[e^{-N\lambda\tr_N(V(X))}\right]\sum_{0\leq k\leq n} \int_{t_k\geq \dots\geq t_1\geq 0} (-\lambda)^k \tau\left(\nabla_V^{\{G,\dots,G\},T_k}\circ\dots\circ\nabla_V^{\{G\},T_1}\left(\sum_{l=0}^{p}X^l\right)(x^{T_k}) \right)dt_1\dots dt_k \nonumber\\
		&+ (-\lambda)^{n+1}\int_{t_{n+1}\geq \dots\geq t_1\geq 0} \E\Big[ \tau_N\left(\nabla_V^{\{G,\dots,G\},T_{n+1}}\circ\dots\circ\nabla_V^{\{G\},T_1}\left(\sum_{l=0}^{p}X^l\right)\left(X^{N,T_{n+1}}\right)\right) \\
		&\quad\quad\quad\quad\quad\quad\quad\quad\quad\quad\quad\quad\quad\quad\quad\quad\quad\quad\quad\quad\quad\quad\quad\quad\quad\quad\quad\quad\quad\quad\quad e^{-N\tr_N(V(X^N))} \Big] dt_1\dots dt_{n+1} \nonumber \\
		&+ \frac{1}{N^2} \sum_{0\leq k\leq n} (-\lambda)^k \int_{\substack{t_{k+1}\geq \dots\geq t_1\geq 0,\\ t_{k+1}\geq t_{k+2}\geq 0}} \E\Big[ \tau_N\Big(L^{H,T_{k+2}}\circ\nabla_V^{\{G,\dots,G\},T_k}\circ\cdots \nonumber\\
		&\quad\quad\quad\quad\quad\quad\quad\quad\quad\quad\quad\quad\quad\cdots\circ\nabla_V^{\{G\},T_1}\left(\sum_{l=0}^{p}X^l\right)\left(X^{N,\{T_{k+2}\}}\right)\Big)\ e^{-N\tr_N(V(X^N))} \Big] dt_1\dots dt_{k+2}. \nonumber
	\end{align}
	Besides, one can apply Theorem 3.4 of \cite{guionnet-segala} in combination with Theorem 2.3 of the same paper which states that for any polynomial $Q_1,\dots,Q_l\in\A_{d}$,
	$$ \lim_{N\to\infty}\frac{\E\left[\ts_N(Q_1(X^N))\dots \ts_N(Q_l(X^N)) e^{-N\lambda\tr_N(V(X))}\right]}{\E\left[ e^{-N\lambda\tr_N(V(X))}\right]} = \tau_{\lambda}(Q_1)\dots \tau_{\lambda}(Q_l),$$
	where $\tau_{\lambda}(Q)$ is an analytic function in $\lambda$. Thus since the last line in Equation \eqref{lskmclkmc} is normalized by $N^2$, we have that
	\begin{align*}
		&\tau_{\lambda}\left(\sum_{l=0}^{p}X^l\right) = \sum_{0\leq k\leq n} (-\lambda)^k \int_{t_k\geq \dots\geq t_1\geq 0}  \tau\left(\nabla_V^{\{G,\dots,G\},T_n}\circ\dots\circ\nabla_V^{\{G\},T_1}\left(\sum_{l=0}^{p}X^l\right)(x^{T_k}) \right) \\
		&\quad\quad\quad\quad\quad\quad\quad\quad\quad\quad\quad\quad\quad\quad\quad\quad\quad\quad\quad\quad\quad\quad\quad\quad\quad\quad\quad\quad\quad\quad\quad\quad dt_1\dots dt_k + (-\lambda)^{n+1}g_{p,n}(\lambda), \nonumber
	\end{align*}
	for some analytic function $g_{p,n}$. Since $\lambda\mapsto \tau_{\lambda}(X^{2p}+X^{2p-1})$ is analytic and the equation above is true for any $n$, this implies that 
	\begin{equation}
		\label{kjdnvflskmvls}
		\tau_{\lambda}\left(\sum_{l=0}^{p}X^l\right) = \sum_{k\geq 0} (-\lambda)^k \int_{t_k\geq \dots\geq t_1\geq 0}  \tau\left(\nabla_V^{\{G,\dots,G\},T_k}\circ\dots\circ\nabla_V^{\{G\},T_1}\left(\sum_{l=0}^{p}X^l\right)(x^{T_k}) \right) dt_1\dots dt_k.
	\end{equation}
	We now show by induction that 
	$$ 2^k \tau\left(\nabla_V^{\{G,\dots,G\},T_k}\circ\dots\circ\nabla_V^{\{G\},T_1}\left(\sum_{l=0}^{p}X^l\right)(x^{T_k}) \right) \geq \sum_{(n_1,\dots,n_k)\in E_{k,p}} e^{-\sum\limits_{1\leq i\leq k} (t_i-t_{n_i-1})/2}. $$
	More precisely, let us assume that for a given $k$:
	\begin{itemize}
		\item One can write
		\begin{align*}
			& 2^k \nabla_V^{\{G,\dots,G\},T_k}\circ\dots\circ\nabla_V^{\{G\},T_1}\left(\sum_{l=0}^{p}X^l\right) \\
			&= \sum_{(n_1,\dots,n_k)\in E_k} e^{-\sum\limits_{1\leq i\leq k} (t_i-t_{n_i-1})/2}\ P_{(n_1,\dots,n_k)}\left(X_{\emptyset},X_{\{k\}},X_{\{k,k-1\}},\dots,X_{\{k,\dots,1\}}\right),
		\end{align*}
		where $P_{(n_1,\dots,n_k)}$ is a polynomial.
		\item One can assume that if $(n_1,\dots,n_k)\in E_k\backslash E_{k,p}$, i.e. when there exists $j\in [1,k]$ such that $\#\{i\leq k\ |\ n_i=j\} > p$, then $P_{(n_1,\dots,n_k)}=0$.
		\item Else if $(n_1,\dots,n_k)\in E_{k,p}$, one can write 
		$$P_{(n_1,\dots,n_k)} = \sum_{M\in F} c_M M,$$
		where $F$ is a collection of monomials and for any $M\in F$, $c_M\in [1,\infty)$. Besides, for any $l_j\in[0,p-\#\{i\leq k\ |\ n_i=j\}]$, $1\leq j\leq k+1$, one can find a monomial $M\in F$ such that $\deg_{X_{\{k,\dots,j\}}}M=l_j$, reciprocally for every monomial $M\in F$,  $\deg_{X_{\{k,\dots,j\}}}M\in [0,p-\#\{i\leq k\ |\ n_i=j\}]$.
	\end{itemize}
	
	Next, let us prove that if it is true for a given $k$ then it is true for $k+1$:
	\begin{itemize}
		\item We begin with \begin{align*}
			& 2^{k+1} \nabla_V^{\{G,\dots,G\},T_{k+1}}\circ\dots\circ\nabla_V^{\{G\},T_1}\left(\sum_{l=0}^{p}X^l\right) \\
			&= \sum_{(n_1,\dots,n_k)\in E_k} e^{-\sum\limits_{1\leq i\leq k} (t_i-t_{n_i-1})/2}\sum_{n_{k+1}=1}^{k+1} e^{t_{n_{k+1}-1}-t_{k+1}} \\
			&\quad\quad\quad\quad\quad\quad\quad\quad\quad\quad\quad\quad \left[\D_{\{k,\dots,n_{k+1}\}} \left(P_{(n_1,\dots,n_k)} \right) \right]\left(X_{\{k+1\}},X_{\{k+1,k\}},\dots,X_{\{k+1,\dots,1\}}\right)\ \D V(X_{\emptyset}) \\
			&= \sum_{(n_1,\dots,n_{k+1})\in E_k} e^{-\sum\limits_{1\leq i\leq k+1} (t_i-t_{n_i-1})/2} \left[\D_{\{k,\dots,n_{k+1}\}} \left(P_{(n_1,\dots,n_k)} \right) \right]\left(X_{\{k+1\}},X_{\{k+1,k\}},\dots,X_{\{k+1,\dots,1\}}\right)\\
			&\quad\quad\quad\quad\quad\quad\quad\quad\quad\quad\quad\quad\quad\quad\quad\quad\quad\quad\quad\quad\quad\quad\quad\quad\quad\quad\quad\quad\quad\quad\quad\quad\quad\quad\quad\quad\quad\quad\quad \D V(X_{\emptyset}),
		\end{align*}
		and then set
		$$ P_{(n_1,\dots,n_{k+1})} = \left[\D_{\{k,\dots,n_{k+1}\}} \left(P_{(n_1,\dots,n_k)} \right) \right]\left(X_{\{k+1\}},X_{\{k+1,k\}},\dots,X_{\{k+1,\dots,1\}}\right)\ \D V(X_{\emptyset}).$$
		
		\item Since by assumption, for every $j$, $\deg_{X_{\{k,\dots,j\}}}P_{(n_1,\dots,n_k)}\in [0,p-\#\{i\leq k\ |\ n_i=j\}]$, then if $(n_1,\dots,n_{k+1})\in E_{k+1}\backslash E_{k+1,p}$, then either $(n_1,\dots,n_k)\in E_k\backslash E_{k,p}$ and $P_{(n_1,\dots,n_k)}=0$, or $\#\{i\leq k\ |\ n_i=n_{k+1}\} = p$ and $\deg_{X_{\{k,\dots,n_{k+1}\}}}P_{(n_1,\dots,n_k)}=0$. In both cases, this implies that $ P_{(n_1,\dots,n_{k+1})}=0$ if $(n_1,\dots,n_{k+1})\in E_{k+1}\backslash E_{k+1,p}$.
		
		\item Else if $(n_1,\dots,n_{k+1})\in E_{k+1,p}$, one can write 
		$$P_{(n_1,\dots,n_k)} = \sum_{M\in F} c_M M,$$
		hence
		$$P_{(n_1,\dots,n_{k+1})} = \sum_{M\in F} c_M \left[\D_{\{k,\dots,n_{k+1}\}}M \right]\left(X_{\{k+1\}},X_{\{k+1,k\}},\dots,X_{\{k+1,\dots,1\}}\right)\ \D V(X_{\emptyset}).$$
		Consequently, one can write
		$$P_{(n_1,\dots,n_{k+1})} = \sum_{M\in G} c_M M,$$
		where $G$ is a collection of monomials and for any $M\in G$, $c_M\in [1,\infty)$. Besides, for any $l_j\in[0,p-\#\{i\leq k\ |\ n_i=j\}]$, $1\leq j\leq k+1$, one can find a monomial $M\in F$ such that $\deg_{X_{\{k,\dots,j\}}}M=l_j$. Consequently $\left[\D_{\{k,\dots,n_{k+1}\}}M \right]\left(X_{\{k+1\}},X_{\{k+1,k\}},\dots,X_{\{k+1,\dots,1\}}\right)\ \D V(X_{\emptyset})$ yields a monomial such that $\deg_{X_{\{k+1,\dots,n_{k+1}\}}}M=l_{n_{k+1}}-1$, $\deg_{X_{\emptyset}}M$ can be any integer in $[0,p-\#\{i\leq k+1\ |\ n_i=k+1\}]$ since $\D V= \alpha_p X + \sum_{i=0}^{p}(i+1)X^i$, and $\deg_{X_{\{k+1,\dots,j\}}}M=l_j$ else.
		
		Reciprocally, by following the same reasoning, for every monomial $M\in G$, for any $j\leq k+2$, $\deg_{X_{\{k+1,\dots,j\}}}M\in [0,p-\#\{i\leq k\ |\ n_i=j\}]$.
	\end{itemize}
	
	Moverover, thanks to the Schwinger-Dyson equations, see Proposition~\ref{3SDE}, one has that the trace of a monomial in free semi-circular variables is a non-negative integer. Consequently, if $(n_1,\dots,n_k)\in E_{k,p}$, let $M$ be a monomial such that for all $j\leq k+1$, $\deg_{X_{\{k,\dots,n_{k+1}\}}}P_{(n_1,\dots,n_k)}=0$, i.e $M=1$, then
	$$ \tau\left(P_{(n_1,\dots,n_k)}(x^{T_k})\right) \geq c_M \geq 1.$$
	Thus one has that
	\begin{align*}
		&\frac{1}{2^k}\int_{t_k\geq \dots\geq t_1\geq 0}  \tau\left(\nabla_V^{\{G,\dots,G\},T_k}\circ\dots\circ\nabla_V^{\{G\},T_1}\left(\sum_{l=0}^{p}X^l\right)(x^{T_k}) \right) dt_1\dots dt_k \\
		&\geq \frac{1}{2^k} \int_{t_k\geq \dots\geq t_1\geq 0}  \sum_{(n_1,\dots,n_k)\in E_{k,p}} e^{-\sum\limits_{1\leq i\leq k} (t_i-t_{n_i-1})/2}\ dt_1\dots dt_k \\
		&= \int_{t_k\geq \dots\geq t_1\geq 0}  \sum_{(n_1,\dots,n_k)\in E_{k,p}} e^{-\sum\limits_{1\leq i\leq k} t_i-t_{n_i-1}}\ dt_1\dots dt_k \\
		&= I_{k,p}.
	\end{align*}
	Consequently thanks to \eqref{kjdnvflskmvls}, one has that for $\lambda$ small enough,
	$$ \sum_{k\geq 0} I_{k,p} \lambda^k <\infty.$$
	Hence there exists a constant $K_p$ such that $I_{k,p} \leq (K_p)^k$.	
\end{proof}

As a corollary of Lemmas \ref{odvfkmssm} and \ref{lkmdsocms} we immediately get that 

\begin{lemma}
	\label{sdlmcldsfmvs}
	There exists a constant $C$ such that with
	$$ E_{k} = \Big\{ (n_1,\dots,n_k)\in\N^k\ \Big|\ \forall i, 1\leq n_i \leq i\Big\}, $$
	one has,
	\begin{equation*}
		I_k=\int_{t_n\geq \dots \geq t_1\geq 0} \sum_{(n_1,\dots,n_k)\in E_{k}} e^{-\sum\limits_{1\leq i\leq k} t_i-t_{n_i-1}}\ dt_1\dots dt_k \leq  C^k.
	\end{equation*}
\end{lemma}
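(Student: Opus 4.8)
The plan is to simply chain the two preceding lemmas, since the present statement is nothing more than their formal combination. First I would fix a value of $p$, say $p=2$, so that Lemma~\ref{lkmdsocms} provides a constant $K_2$ with $I_{k,2}\leq (K_2)^k$ for every $k\geq 1$. Then I would invoke Lemma~\ref{odvfkmssm} with this \emph{same} $p=2$, which gives
$$
I_k=\int_{t_n\geq\dots\geq t_1\geq 0}\sum_{(n_1,\dots,n_k)\in E_k} e^{-\sum_{1\leq i\leq k}t_i-t_{n_i-1}}\ dt_1\cdots dt_k \ \leq\ 2^k\, I_{k,2}.
$$
Combining the two bounds yields $I_k\leq 2^k (K_2)^k=(2K_2)^k$, so the lemma holds with $C=2K_2$. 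For $k=0$ the integral equals $1$, so the bound is trivially true as long as $C\geq 1$, which we may harmlessly assume by enlarging $C$ if necessary.

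The only point deserving a word of care is that Lemmas~\ref{odvfkmssm} and~\ref{lkmdsocms} are each stated for a fixed parameter $p\geq 2$, and the argument requires using the same $p$ in both; but since neither of the two constants involved (the factor $2$ in Lemma~\ref{odvfkmssm} and the base $K_p$ in Lemma~\ref{lkmdsocms}) depends on $k$, their product is again of the form $C^k$, as required. There is no real obstacle in this step: all the substance of the estimate lies in the two preceding lemmas — the combinatorial reindexing map $f:E_k\to E_{k,p}$ with fibres of size at most $2^k$ in Lemma~\ref{odvfkmssm}, and the comparison of $I_{k,p}$ with the coefficients of the convergent power series $\sum_k I_{k,p}\lambda^k$ arising from the strictly convex model $V=\alpha_p X^2+\sum_{l=1}^{p+1}X^l$ in Lemma~\ref{lkmdsocms} — and the statement of Lemma~\ref{sdlmcldsfmvs} follows at once.
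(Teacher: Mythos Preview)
Your proof is correct and matches the paper's approach exactly: the paper simply states that the lemma is an immediate corollary of Lemmas~\ref{odvfkmssm} and~\ref{lkmdsocms}, and your argument (fix $p=2$, chain the two bounds to get $C=2K_2$) is precisely how one combines them.
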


\subsection{Proof of Theorem \ref{maintherorem} and \ref{maintherorem2}}

Note that until this section we did not need Assumption \ref{kdmcslcs0}. However we need it in the following in order to control the error term.

\begin{proof}[Proof of Theorem \ref{maintherorem}]
	
	By applying Theorem \ref{mainlemma} repeatedly one gets that for any $K>0$,
	\begin{align}
		\label{mswkmcowmc}
		&\frac{\E\left[\ts_N\left(P(X^N)\right) e^{-\lambda N\tr_N\left(V(X^N)\right)}\right]}{\E\left[ e^{-\lambda N\tr_N\left(V(X^N)\right)}\right]} \\
		&= \sum_{0\leq l \leq n } \frac{1}{N^{2l}} \sum_{0\leq k_0,\dots,k_l\leq K} (-\lambda)^{k_0+\dots+k_l} \int_{A_{k_0,\dots,k_l}} \tau\Big((\nabla_V)^{k_l}\circ L\circ (\nabla_V)^{k_{l-1}}\circ\dots \nonumber \\
		&\quad\quad\quad\quad\quad\quad\quad\quad\quad\quad\quad\quad\quad\quad\quad\quad\quad\quad\quad\quad \dots \circ L\circ (\nabla_V)^{k_0}(P)\left(x^{T_{2l+k_l+\dots+k_0}}\right)\Big) dt_1\dots dt_{2l+k_0+\dots+k_l} \nonumber \\
		&\quad + \frac{1}{N^{2(n+1)}} \sum_{0\leq k_0,\dots,k_{n}\leq K} (-\lambda)^{k_0+\dots+k_{n}} \int_{A_{k_0,\dots,k_{n},0}} \E\Big[\tau_N\Big(L\circ (\nabla_V)^{k_n}\circ L\circ (\nabla_V)^{k_{n-1}}\circ\dots \nonumber \\
		&\quad\quad\quad\quad\quad\quad\quad\quad \dots \circ L\circ (\nabla_V)^{k_0}(P)\left(X^{N,T_{2(n+1)+k_{n}+\dots+k_0}}\right)\Big)  e^{-\lambda N\tr_N\left(V(X^N)\right)}\Big] dt_1\dots dt_{2(n+1)+k_0+\dots+k_n} \nonumber \\
		&\quad\quad\quad\quad\quad\quad\quad\quad\quad\quad\quad\quad\quad\quad\quad\quad\quad\quad\quad\quad\quad\quad\quad\quad\quad\quad\quad\quad\quad\quad\quad\quad\quad \times\E\left[ e^{-\lambda N\tr_N\left(V(X^N)\right)}\right]^{-1}, \nonumber
	\end{align}

	\noindent where $A_{k_0,\dots,k_{l}}$ is as in Equation \eqref{cmskcokmcvps}, and $A_{k_0,\dots,k_{n},0}=A_{k_0,\dots,k_{n+1}}$ with $k_{n+1}=0$. Then let us set a few definitions. First and foremost, given $M\in\A^H_{d}$ a monomial, recall that we denote by $\deg M$ the length of $M$ as a word in the variables $X_{i,I}$, see Definition \ref{degree2}. We also define $\deg^{h}M$ as the number of occurrences of variables  belonging to the family $(X_{i,I})_{i\in [1,d],I\in J_H^h}$. Then for $Q\in\A^H_{d}$, we can write 
	$$ Q = \sum_{1\leq i\leq \nb(Q)} c_i M_i $$
	where $c_i\in\C$ and $M\in\A^H_{d}$ are monomials (not necessarily distinct). We also define $C_{\mathrm{max}}(Q) = \max \{1, \sup_i |c_i|\}$ and
	$$ D_N = 2+\max \left\{\norm{X_i^N}\right\}_{1\leq i\leq d},$$
	then we get that 
	\begin{equation}
		\label{3majorgross}
		\norm{Q(X^{N,T_H})} \leq \nb(Q) \times  C_{\mathrm{max}}(Q) \times D_N^{\deg(Q)} .
	\end{equation}
	In other words, Equation \eqref{3majorgross} gives us an upper bound of the norm of a polynomial evaluated in $X^{N,T_H}$ which can be written as a linear combination of at most $\nb(Q)$ monomials of degree smaller or equal to $\deg(Q)$ and coefficients bounded by $C_{\mathrm{max}}(Q)$. Besides, one can always write,
	\begin{align*}
		\nabla_V^{H,T_{n+1}}(Q) = \sum_{0\leq h\leq n}\ e^{(\widetilde{t}_h-t_{n+1})/2} Q_h,
	\end{align*}
	where $Q_h$ is such that
	\begin{itemize}
		\item $\deg(Q_h) \leq \deg(Q) + \deg(V) -2$,
		\item $\deg^{0}(Q_h) = \deg(V)-1$,
		\item for any $\widetilde{h}\in [1,n+1]$, $\deg^{\widetilde{h}}(Q_h) \leq \deg^{\widetilde{h}-1}(Q)$,
		\item $\nb(Q_h) \leq \nb(Q) \times \deg^h(Q) \times \nb(V) \times \deg(V)$,
		\item $C_{\mathrm{max}}(Q_h) \leq \frac{1}{2}C_{\mathrm{max}}(Q) C_{\mathrm{max}}(V)$.
	\end{itemize}
	Similarly, one has that
	\begin{align*}
		&L^{H,T_{n+2}}(Q) = \sum_{1\leq s\leq n+1} \1_{[\widetilde{t}_{s-1},\widetilde{t}_s]}(t_{n+2})\ \sum_{\substack{0\leq h,k \leq n \\ 0\leq x,y\leq s-1}} e^{(\widetilde{t}_h+\widetilde{t}_k+\widetilde{t}_y+\widetilde{t}_x)/2-t_{n+1}-t_{n+2}}\ Q_{s,h,k,x,y},
	\end{align*}
	where $Q_{s,h,k,x,y}$ is such that,
	\begin{itemize}
		\item $\deg(Q_{s,h,k,x,y}) \leq \deg(Q) -4$,
		\item $\deg^{0}(Q_{s,h,k,x,y}) = \deg^{1}(Q_{s,h,k,x,y}) = 0$,
		\item for any $\widetilde{h}\in [2,n+2]$, $\deg^{\widetilde{h}}(Q_{s,h,k,x,y}) \leq \deg^{\widetilde{h}-2}(Q)$,
		\item $\nb(Q_{s,h,k,x,y}) \leq \nb(Q) \times \deg^h(Q)\deg^k(Q)\deg^x(Q)\deg^y(Q) $,
		\item $C_{\mathrm{max}}(Q_{s,h,k,x,y}) \leq \frac{1}{2}C_{\mathrm{max}}(Q)$.
	\end{itemize}
	Thus by combining those results with Equation \eqref{3majorgross}, if we set 
	\begin{itemize}
		\item $K_j=\sum_{z=0}^{j}k_z$,
		\item for $i\in [K_{j-1}+1,K_j]$, $r_i=t_{2j+i}$,
		\item $F_n = \cup_{r=0}^{n-1}\{K_r+2r+1,K_r+2r+2\}$,
	\end{itemize}
	 then we get by induction that
	\begin{align}
		\label{sdckmsldc}
		&\norm{L\circ(\nabla_V)^{k_n}\circ L\circ (\nabla_V)^{k_{n-1}}\circ  \dots \circ L\circ (\nabla_V)^{k_0}(P)\left(X^{N,T_{2n+K_{n}}}\right)} \\
		&\leq \prod_{0\leq j\leq n}\Bigg(\sum_{ \substack{l_i\in [1,i],\\ K_{j-1}+1\leq i\leq K_j}} e^{-\sum\limits_{K_{j-1}+1\leq i\leq K_j} (r_i-r_{l_i-1})/2} \nonumber\\
		&\quad\quad\quad\quad\quad\times \sum_{1\leq s\leq K_j+2j+1} \1_{[\widetilde{t}_{s-1},\widetilde{t}_s]}\left(t_{K_j+2j+2}\right) \sum_{\substack{0\leq h,k \leq K_j+2j,\ 0\leq x,y\leq s-1 \\ h,k,x,y\notin F_n}} e^{({t}_h+{t}_k+{t}_y+{t}_x)/2-t_{K_j+2j+1}-t_{K_j+2j+2}} \Bigg) \nonumber\\
		&\quad\times \left(\frac{1}{2}C_{\mathrm{max}}(V)\nb(V)\deg(V)^2\right)^{k_0+\dots+k_n} \deg(V)^{4(n+1)} \deg(P)! \nb(P) C_{\mathrm{max}}(P) D_N^{\deg P + (k_0+\dots+k_n)(\deg V -1)}\nonumber
	\end{align}
	Note that every time we apply the operator $\nabla_V$ we gain a factor $\frac{C_{\mathrm{max}}(V)}{2}$ since $C_{\mathrm{max}}(Q_h)$ is upper bounded by $ \frac{1}{2}C_{\mathrm{max}}(Q) C_{\mathrm{max}}(V)$. Similarly $\nb(Q_h)$ is upper bounded by $\nb(Q) \times \deg^h(Q) \times \nb(V) \times \deg(V)$, and by induction $\deg^h(Q)$ is upper bounded by the maximum of $\deg(V)-1$ and $\deg P$. However one can only differentiate $P$ that many times before its degree is smaller than the one of $V$. Taking all of those remarks into consideration, after repeatedly applying the operator $\nabla_V$ we get the term $\left(\frac{1}{2}C_{\mathrm{max}}(V)\nb(V)\deg(V)^2\right)^{k_0+\dots+k_n} \deg(P)!$. We also apply the operator $L$ a total of $n+1$ times, and given that $C_{\mathrm{max}}(Q_{s,h,k,x,y})$ is upper bounded by $\frac{1}{2}C_{\mathrm{max}}(Q)$ whereas $\nb(Q_{s,hx,k,x,y})$ is upper bounded by $\nb(Q) \times \deg^h(Q)\deg^k(Q)\deg^x(Q)\deg^y(Q) $ which makes us gain a factor $\deg(V)^4$, we get the factor $\deg(V)^{4(n+1)}$.

	Next, for a given $j\in [0,n-1]$, with $t_1,\dots,t_{K_j+2j}\in A_{k_0,\dots k_j}$,
	\begin{align}
		\label{kemvdclewf}
		&\int_{t_{K_j+2j}}^{\infty}\int_0^{t_{K_j+2j+1}} \sum_{1\leq s\leq K_j+2j+1} \1_{[\widetilde{t}_{s-1},\widetilde{t}_s]}\left(t_{K_j+2j+2}\right) \\
		&\quad\quad\quad\quad\quad\quad\quad\quad\quad\quad\quad \sum_{\substack{0\leq h,k \leq K_j+2j,\ 0\leq x,y\leq s-1 \nonumber \\ h,k,x,y\notin F_n}} e^{({t}_h+{t}_k+{t}_y+{t}_x)/2-t_{K_j+2j+1}-t_{K_j+2j+2}}\ dt_{K_j+2j+2} dt_{K_j+2j+1} \nonumber \\
		&\int_{r_{K_j}}^{\infty}\int_0^{t_{K_j+2j+1}} \sum_{1\leq s\leq K_j+1} \1_{[r_{s-1},r_s]}\left(t_{K_j+2j+2}\right) \nonumber \\
		&\quad\quad\quad\quad\quad\quad\quad\quad\quad\quad\quad \sum_{0\leq h,k \leq K_j,\ 0\leq x,y\leq s-1 } e^{(r_h+r_k+r_y+r_x)/2-t_{K_j+2j+1}-t_{K_j+2j+2}}\ dt_{K_j+2j+2} dt_{K_j+2j+1} \nonumber \\
		&= \sum_{0\leq h,k \leq K_j,\ 0\leq x,y< s\leq K_j+1 } \int_{r_{K_j}}^{\infty}\int_{r_{s-1}}^{r_s} e^{(r_h+r_k+r_y+r_x)/2-t_{K_j+2j+1}-t_{K_j+2j+2}}\ dt_{K_j+2j+2} dt_{K_j+2j+1} \nonumber \\
		&= \sum_{0\leq h,k,x,y \leq K_j} \int_{r_{K_j}}^{\infty} \sum_{x\vee y <s\leq  K_j+1} \int_{r_{s-1}}^{r_s} e^{({r}_h+{r}_k+{r}_y+{r}_x)/2-t_{K_j+2j+1}-t_{K_j+2j+2}}\ dt_{K_j+2j+2} dt_{K_j+2j+1} \nonumber \\	
		&\leq \sum_{0\leq h,k,x,y \leq K_j} \int_{r_{K_j}}^{\infty} \int_{r_{x\vee y}}^{\infty} e^{({r}_h+{r}_k+{r}_y+{r}_x)/2-t_{K_j+2j+1}-t_{K_j+2j+2}}\ dt_{K_j+2j+2} dt_{K_j+2j+1} \nonumber \\
		&= \sum_{0\leq h,k,x,y \leq K_j} e^{({r}_h+{r}_k+{r}_y+{r}_x)/2-r_{K_j}-r_{x\vee y}} \nonumber\\
		&\leq (K_j+1)^4. \nonumber	
	\end{align}
	Note that in the last line, we used that thanks to the definition of $A_{k_0,\dots,k_{l}}$ in Equation \eqref{cmskcokmcvps}, one can assume that the sequence $(r_1,\dots,r_{K_j})$ is non-decreasing, and hence $e^{({r}_h+{r}_k+{r}_y+{r}_x)/2-r_{K_j}-r_{x\vee y}}\leq 1$.
	
	Thus by plugging this result back in Equation \eqref{sdckmsldc}, one gets that
	\begin{align*}
		&\norm{(\nabla_V)^{k_n}\circ L\circ (\nabla_V)^{k_{n-1}}\circ  \dots \circ L\circ (\nabla_V)^{k_0}(P)\left(X^{N,T_{2(n+1)+K_{n}}}\right)} \\
		&\leq (K_0+1)^4\times (K_1+1)^4\times\dots\times (K_n+1)^4 \sum_{l_i\in [1,i], i\in [1,K_n]} e^{-\sum\limits_{i\in [1,K_n]} (r_i-r_{l_i-1})/2} \\
		&\quad\times \left(\frac{1}{2}C_{\mathrm{max}}(V)\nb(V)\deg(V)^2\right)^{K_n} \left(\deg(V)\right)^{4(n+1)} \deg(P)!\ \nb(P) C_{\mathrm{max}}(P)\times  D_N^{\deg P + K_n(\deg V -1)}.
	\end{align*}
	Hence one has that,
	\begin{align*}
		&\left|\int_{A_{k_0,\dots,k_n,0}} \tau_N\Big((\nabla_V)^{k_n}\circ L\circ (\nabla_V)^{k_{n-1}}\circ \dots \circ L\circ (\nabla_V)^{k_0}(P)\left(X^{N,T_{2(n+1)+k_{n}+\dots+k_0}}\right)\Big) dt_1\dots dt_{2(n+1)+k_0+\dots+k_n}\right| \\
		&\leq (K_n+1)^{4(n+1)} I_{K_n}\left(C_{\mathrm{max}}(V)\nb(V)\deg(V)^2\right)^{K_n} \left(\deg(V)\right)^{4(n+1)} \deg(P)!\ \nb(P) C_{\mathrm{max}}(P) \times D_N^{\deg P + K_n(\deg V -1)}.
	\end{align*}
	Thus thanks to Lemma \ref{sdlmcldsfmvs}, there exists a universal constant $C_1$ such that
	\begin{align*}
		&\left|\int_{A_{k_0,\dots,k_n,0}} \tau_N\Big((\nabla_V)^{k_n}\circ L\circ (\nabla_V)^{k_{n-1}}\circ \dots \circ L\circ (\nabla_V)^{k_0}(P)\left(X^{N,T_{2(n+1)+k_{n}+\dots+k_0}}\right)\Big) dt_1\dots dt_{2(n+1)+k_0+\dots+k_n}\right| \\
		&\leq (K_n+1)^{4(n+1)} \left(C_1 \times C_{\mathrm{max}}(V)\nb(V)\deg(V)^2\right)^{K_n} \left(\deg(V)\right)^{4(n+1)} \deg(P)!\ \nb(P) C_{\mathrm{max}}(P) \times D_N^{\deg P + K_n(\deg V -1)}.
	\end{align*}
	Besides, thanks Assumption \ref{kdmcslcs0}, there exists a constant $C$ and a sequence $u_N$ such that for any $\lambda\in [0,1]$, for any $k\leq u_N$,
	\begin{align*}
		&\frac{\E\left[D_N^k\ e^{-\lambda N\tr_N\left(V(X^N)\right)}\right]}{\E\left[ e^{-\lambda N\tr_N\left(V(X^N)\right)}\right]} \\
		&\leq \left(\frac{\E\left[D_N^{2u_N} e^{-\lambda N\tr_N\left(V(X^N)\right)}\right]}{\E\left[ e^{-\lambda N\tr_N\left(V(X^N)\right)}\right]}\right)^{\frac{k}{2u_N}} \\
		&\leq 2^k \left(2^{2u_N}+\frac{\E\left[ \left(\tr_N((X_1^N)^{2u_N})+\dots+\tr_N((X_d^N)^{2u_N})\right) e^{-\lambda N\tr_N\left(V(X^N)\right)}\right]}{\E\left[ e^{-\lambda N\tr_N\left(V(X^N)\right)}\right]}\right)^{\frac{k}{2u_N}} \\
		&\leq \left(4+2(dN)^{\frac{1}{2u_N}}C\right)^k.
	\end{align*}
	Thus we want to fix $K$ in \eqref{mswkmcowmc} such that 
	$$\deg P + K_n (\deg V -1) \leq u_N.$$
	Thus we fix $K = \beta u_N$ where $\beta$ is a constant such that $ n\beta(\deg V -1) <1$ and for $N$ large enough the equation above is satisfied. Consequently, there exists a constant $C_2$ which only depends on $V$ such that
	\begin{align*}
		&\Bigg| \int_{A_{k_0,\dots,k_n,0}} \E\Big[\tau_N\Big((\nabla_V)^{k_n}\circ L\circ (\nabla_V)^{k_{n-1}}\circ\dots L\circ (\nabla_V)^{k_0}(P)\left(X^{N,T_{2(n+1)+k_{n}+\dots+k_0}}\right)\Big)  e^{-\lambda N\tr_N\left(V(X^N)\right)}\Big] \\
		&\quad\quad\quad\quad\quad\quad\quad\quad\quad\quad\quad\quad\quad\quad\quad\quad\quad\quad\quad\quad\quad\quad dt_1\dots dt_{2n+k_0+\dots+k_n}\quad \times\E\left[ e^{-\lambda N\tr_N\left(V(X^N)\right)}\right]^{-1} \Bigg| \\
		&\leq (K_n+1)^{4(n+1)} \left(C_2 \times C_{\mathrm{max}}(V)\nb(V)\deg(V)^2\right)^{K_n} \left(\deg(V)\right)^{4(n+1)} \deg(P)!\ \nb(P) C_{\mathrm{max}}(P) C^{\deg P}.
	\end{align*}
	Besides, for $a<1$,
	\begin{align}
		\label{ksncksnck}
		\sum_{0\leq k_0,\dots,k_{n}\leq K} (K_n+1)^{4(n+1)} a^{K_n} &\leq \sum_{k\geq 0} \sum_{k_0+\dots+k_{n}= k} (k+1)^{4(n+1)} a^{k} \\
		&\leq \frac{d^{4(n+1)}}{(da)^{4(n+1)}}\left(\sum_{k\geq 0} \sum_{k_0+\dots+k_{n}= k} a^{k+4(n+1)}\right) \nonumber\\
		&= \frac{d^{4(n+1)}}{(da)^{4(n+1)}}\left(a^{4(n+1)} \sum_{k_0,\dots,k_{n}\geq 0} a^{k_0+\dots+k_n}\right) \nonumber\\
		&= \frac{d^{4(n+1)}}{(da)^{4(n+1)}}\left(\frac{a^4}{1-a}\right)^{n+1} \nonumber\\
		&\leq  \frac{(5(n+1))^{4(n+1)}}{(1-a)^{5(n+1)}}. \nonumber
	\end{align}
	
	\noindent Thus for $\lambda<\left(C_2 \times C_{\mathrm{max}}(V)\nb(V)\deg(V)\right)^{-1}$, one has 
	\begin{align*}
		&\frac{\E\left[\ts_N\left(P(X^N)\right) e^{-\lambda N\tr_N\left(V(X^N)\right)}\right]}{\E\left[ e^{-\lambda N\tr_N\left(V(X^N)\right)}\right]} \\
		&= \sum_{0\leq l \leq n } \frac{1}{N^{2l}} \sum_{0\leq k_0,\dots,k_l\leq K} (-\lambda)^{k_0+\dots+k_l} \int_{A_{k_0,\dots,k_l}} \tau\Big((\nabla_V)^{k_l}\circ L\circ (\nabla_V)^{k_{l-1}}\circ\dots \nonumber \\
		&\quad\quad\quad\quad\quad\quad\quad\quad\quad\quad\quad\quad\quad\quad\quad\quad\quad\quad\quad\quad \dots \circ L\circ (\nabla_V)^{k_0}(P)\left(x^{T_{2l+k_l+\dots+k_0}}\right)\Big) dt_1\dots dt_{2l+k_0+\dots+k_l} \nonumber \\
		&\quad + \mathcal O\left(\frac{1}{N^{2(n+1)}}\right).
	\end{align*}
	Besides, with the very same kind of computations, with $\alpha_l^V(\lambda,P)$ defined as in \eqref{smcskmdcskmcslk},
	\begin{align*}
		&\Bigg|\alpha_l^V(\lambda,P) - \sum_{0\leq k_0,\dots,k_l\leq K} (-\lambda)^{k_0+\dots+k_l} \int_{A_{k_0,\dots,k_l}} \tau\Big((\nabla_V)^{k_l}\circ L\circ (\nabla_V)^{k_{l-1}}\circ\dots \\
		&\quad\quad\quad\quad\quad\quad\quad\quad\quad\quad\quad\quad\quad\quad\quad\quad\quad\quad\quad\quad \dots \circ L\circ (\nabla_V)^{k_0}(P)\left(x^{T_{2l+k_l+\dots+k_0}}\right)\Big) dt_1\dots dt_{2l+k_0+\dots+k_l} \Bigg|\\
		&= \Bigg|\sum_{\exists i\in [0,l], k_i>K} (-\lambda)^{k_0+\dots+k_l} \int_{A_{k_0,\dots,k_l}} \tau\Big((\nabla_V)^{k_l}\circ L\circ (\nabla_V)^{k_{l-1}}\circ\dots \\
		&\quad\quad\quad\quad\quad\quad\quad\quad\quad\quad\quad\quad\quad\quad\quad\quad\quad\quad\quad\quad \dots \circ L\circ (\nabla_V)^{k_0}(P)\left(x^{T_{2l+k_l+\dots+k_0}}\right)\Big) dt_1\dots dt_{2l+k_0+\dots+k_l} \Bigg|\\
		&\leq \sum_{k_0+\dots +k_l>K} \lambda^{k_0+\dots+k_l} \Bigg| \int_{A_{k_0,\dots,k_l}} \tau\Big((\nabla_V)^{k_l}\circ L\circ (\nabla_V)^{k_{l-1}}\circ\dots \\
		&\quad\quad\quad\quad\quad\quad\quad\quad\quad\quad\quad\quad\quad\quad\quad\quad\quad\quad\quad\quad \dots \circ L\circ (\nabla_V)^{k_0}(P)\left(x^{T_{2l+k_l+\dots+k_0}}\right)\Big) dt_1\dots dt_{2l+k_0+\dots+k_l} \Bigg| \\
		&\leq \sum_{k_0+\dots +k_l>K} \lambda^{K_l} (K_l+1)^{4l} I_{K_l}\left(2^{\deg V -1} C_{\mathrm{max}}(V)\nb(V)\deg(V)^2\right)^{K_l} \left(\deg(V)\right)^{4l} \deg(P)!\ \nb(P) C_{\mathrm{max}}(P) 2^{\deg P} \\
		&\leq C_{P,l,V} \sum_{k_0+\dots +k_l>K} (K_l+1)^{4l} \left(\lambda C \times 2^{\deg V -1} C_{\mathrm{max}}(V)\nb(V)\deg(V)^2\right)^{K_l} \\
		&\leq C_{P,l,V} \sum_{k> K} \sum_{k_0+\dots +k_l = k} (k+1)^{4l} \left(\lambda C \times 2^{\deg V -1} C_{\mathrm{max}}(V)\nb(V)\deg(V)^2\right)^{k}. \\
	\end{align*}
	Next, similarly to Equation \eqref{ksncksnck}, we have that
	\begin{align*}
		\sum_{k_0+\dots +k_l>K} (K_l+1)^{4l} a^{K_l} &\leq \sum_{k_0>K/l} \sum_{k_1,\dots,k_l\geq 0} (K_l+1)^{4l} a^{K_l} \\
		&\leq \frac{d^{4l}}{(da)^{4l}} \left(\sum_{k_0>K/l} \sum_{k_1,\dots,k_l\geq 0} a^{K_l+4l}\right) \\
		&\leq \frac{d^{4l}}{(da)^{4l}} \left(\frac{a^{K/l+4l}}{(1-a)^{l+1}}\right) \\
	\end{align*}
	Thus for $N$ large enough and $\lambda < \left(C \times 2^{\deg V -1} C_{\mathrm{max}}(V)\nb(V)\deg(V)^2\right)^{-1}$,
	\begin{align*}
		&\Bigg|\alpha_l^V(\lambda,P) - \sum_{0\leq k_0,\dots,k_l\leq K} (-\lambda)^{k_0+\dots+k_l} \int_{A_{k_0,\dots,k_l}} \tau\Big((\nabla_V)^{k_l}\circ L\circ (\nabla_V)^{k_{l-1}}\circ\dots \\
		&\quad\quad\quad\quad\quad\quad\quad\quad\quad\quad\quad\quad\quad\quad\quad\quad\quad\quad\quad\quad \dots \circ L\circ (\nabla_V)^{k_0}(P)\left(x^{T_{2l+k_l+\dots+k_0}}\right)\Big) dt_1\dots dt_{2l+k_0+\dots+k_l} \Bigg|\\
		&\leq C_{P,l,V,\lambda} \left(\lambda C \times 2^{\deg V -1} C_{\mathrm{max}}(V)\nb(V)\deg(V)^2\right)^{K/l}.
	\end{align*}
	In particular since we picked $K= \beta u_N\gg \log(N)$, the quantity above is of order $\mathcal{O}\left(N^{-2(n+1)}\right)$ for any $n$. Hence the conclusion.
\end{proof}

\begin{proof}[Proof of Theorem \ref{maintherorem2}]
	The proof is very similar to the previous one. However since we have a cut-off, we have to use Theorem \ref{mainlemma2} instead of Theorem \ref{mainlemma}, which yields the following equation.
	\begin{align}
		\label{mswkmcowmc2}
		&\frac{\E\left[\ts_N\left(P(X^N)\right) e^{-\lambda N\tr_N\left(V(X^N)\right)} \1_{\forall i, ||X_i^N||\leq K}\right]}{\E\left[ e^{-\lambda N\tr_N\left(V(X^N)\right)} \1_{\forall i, ||X_i^N||\leq K}\right]} \\
		&= \sum_{0\leq l \leq n } \frac{1}{N^{2l}}\ \alpha_l^V(\lambda,P) \nonumber \\
		&\quad + \frac{1}{N^{2(n+1)}} \sum_{k_0,\dots,k_n\geq 0} (-\lambda)^{k_0+\dots+k_{n}} \int_{A_{k_0,\dots,k_{n},0}} \E\Big[\tau_N\Big(L\circ(\nabla_V)^{k_n}\circ L\circ (\nabla_V)^{k_{n-1}}\circ\cdots \nonumber \\
		&\quad\quad\quad \dots \circ L\circ (\nabla_V)^{k_0}(P)\left(X^{N,T_{2(n+1)+k_{n}+\dots+k_0}}\right)\Big)  e^{-\lambda N\tr_N\left(V(X^N)\right)} \1_{\forall i, ||X_i^N||\leq K}\Big] dt_1\dots dt_{2(n+1)+k_0+\dots+k_n} \nonumber \\
		&\quad\quad\quad\quad\quad\quad\quad\quad\quad\quad\quad\quad\quad\quad\quad\quad\quad\quad\quad\quad\quad\quad\quad\quad\quad\quad\quad\quad\quad \times\E\left[ e^{-\lambda N\tr_N\left(V(X^N)\right)} \1_{\forall i, ||X_i^N||\leq K} \right]^{-1} \nonumber \\
		&\quad+ \sum_{0\leq l \leq n } \frac{1}{N^{2(l+1)}} \sum_{k_0,\dots,k_l\geq 0} \lambda^{k_0+\dots+k_l} \int_{A_{k_0,\dots,k_l}} \mathcal{E}_{k_0,\dots,k_l}\ dt_1\dots dt_{2l+k_0+\dots+k_l}, \nonumber
	\end{align}
	where with the notations of Theorem \ref{mainlemma2}, for any constant $M<K$, one has that 
	$$ \left|\mathcal{E}_{k_0,\dots,k_l}\right| \leq C_{M,K} \norm{(\nabla_V)^{k_l}\circ L\circ (\nabla_V)^{k_{l-1}}\circ\dots \circ L\circ (\nabla_V)^{k_0}(P)}_{D}\ \mu_{V,K}^N\left(\max_{i\leq i\leq d} \norm{X_i} \geq M\right).$$
	But then, with the very same proof as the one of Equation \eqref{sdckmsldc}, we get that
	\begin{align}
		\label{sdckmsldc2}
		&\norm{(\nabla_V)^{k_l}\circ L\circ (\nabla_V)^{k_{n-1}}\circ  \dots \circ L\circ (\nabla_V)^{k_0}(P)}_{D} \\
		&\leq \prod_{0\leq j\leq l-1}\Bigg(\sum_{ \substack{l_i\in [1,i],\\ K_{j-1}+1\leq i\leq K_j}} e^{-\sum\limits_{K_{j-1}+1\leq i\leq K_j} (r_i-r_{l_i-1})/2} \nonumber\\
		&\quad\quad\quad\quad\quad\times \sum_{1\leq s\leq K_j+2j+1} \1_{[\widetilde{t}_{s-1},\widetilde{t}_s]}\left(t_{K_j+2j+2}\right) \sum_{\substack{0\leq h,k \leq K_j+2j,\ 0\leq x,y\leq s-1 \\ h,k,x,y\notin F_n}} e^{({t}_h+{t}_k+{t}_y+{t}_x)/2-t_{K_j+2j+1}-t_{K_j+2j+2}} \Bigg) \nonumber\\
		&\quad\times\sum_{ \substack{l_i\in [1,i],\\ K_{l-1}+1\leq i\leq K_l}} e^{-\sum\limits_{K_{n-1}+1\leq i\leq K_n} (r_i-r_{l_i-1})/2} \nonumber\\
		&\quad\times \left(\frac{1}{2}C_{\mathrm{max}}(V)\nb(V)\deg(V)^2\right)^{K_l} \left(\deg(V)\right)^{4l} \deg(P)! \nb(P) C_{\mathrm{max}}(P)\times D^{\deg P + K_l(\deg V -1)}\nonumber
	\end{align}

	\noindent Next, thanks to the estimates in \eqref{kemvdclewf},
	\begin{align*}
		&\int_{A_{k_0,\dots,k_l}} \norm{(\nabla_V)^{k_l}\circ L\circ (\nabla_V)^{k_{n-1}}\circ  \dots \circ L\circ (\nabla_V)^{k_0}(P)}_{L}\ dt_1\dots dt_{2l+k_0+\dots+k_n} \\
		&\leq (K_l+1)^{4l} I_{K_l}\left(C_{\mathrm{max}}(V)\nb(V)\deg(V)^2\right)^{K_l} \left(\deg(V)\right)^{4l} \deg(P)!\ \nb(P) C_{\mathrm{max}}(P) \times D^{\deg P + K_l(\deg V -1)}.
	\end{align*}
	Thus after using Lemma \ref{sdlmcldsfmvs}, we get that there exists constants $C_V$ and $C_P$ such that
	\begin{align*}
		&\int_{A_{k_0,\dots,k_l}} \norm{(\nabla_V)^{k_l}\circ L\circ (\nabla_V)^{k_{n-1}}\circ  \dots \circ L\circ (\nabla_V)^{k_0}(P)}_{D}\ dt_1\dots dt_{2l+k_0+\dots+k_n} \\
		&\leq (K_l+1)^{4l} \left(C_V D^{\deg V -1}\right)^{K_l} \left(\deg(V)\right)^{4l} C_P.
	\end{align*}
	Thus for $\lambda< (C_V L^{\deg V -1})^{-1}$, similarly to \eqref{ksncksnck},
	\begin{align*}
		&\sum_{k_0,\dots,k_l\geq 0} \lambda^{k_0+\dots+k_l} \int_{A_{k_0,\dots,k_l}} \mathcal{E}_{k_0,\dots,k_l}\ dt_1\dots dt_{2l+k_0+\dots+k_l} \\
		&\leq C_{M,K} \times \mu_{V,K}^N\left(\max_{i\leq i\leq d} \norm{X_i} \geq M\right) \left(\deg(V)\right)^{4l} C_P \sum_{k_0,\dots,k_l\geq 0} (K_l+1)^{4l} \left(\lambda C_V D^{\deg V -1}\right)^{K_l} \\
		&\leq C_{M,K} \times \mu_{V,K}^N\left(\max_{i\leq i\leq d} \norm{X_i} \geq M\right) \left(\deg(V)\right)^{4l} C_P \frac{(5l)^{4l}}{(1-\lambda C_V D^{\deg V -1})^{5l}}.
	\end{align*}
	Thus the last line of Equation \eqref{mswkmcowmc2} is of order $\mathcal{O}(e^{-cN})$ for some constant $c$ as long as $K$ is sufficiently large thanks to Lemma \ref{vkmslmlskmvlsmv}. Besides, we also have that
	\begin{align*}
		&\Bigg|\sum_{k_0,\dots,k_n\geq 0} (-\lambda)^{k_0+\dots+k_{n}} \int_{A_{k_0,\dots,k_{n},0}} \E\Big[\tau_N\Big(L\circ(\nabla_V)^{k_n}\circ L\circ (\nabla_V)^{k_{n-1}}\circ\cdots \nonumber \\
		&\quad\quad\quad \dots \circ L\circ (\nabla_V)^{k_0}(P)\left(X^{N,T_{2(n+1)+k_{n}+\dots+k_0}}\right)\Big)  e^{-\lambda N\tr_N\left(V(X^N)\right)} \1_{\forall i, ||X_i^N||\leq K}\Big] dt_1\dots dt_{2(n+1)+k_0+\dots+k_n} \nonumber \\
		&\quad\quad\quad\quad\quad\quad\quad\quad\quad\quad\quad\quad\quad\quad\quad\quad\quad\quad\quad\quad\quad\quad\quad\quad\quad\quad\quad\quad \times\E\left[ e^{-\lambda N\tr_N\left(V(X^N)\right)} \1_{\forall i, ||X_i^N||\leq K} \right]^{-1} \Bigg| \nonumber \\
		&\leq \sum_{k_0,\dots,k_n\geq 0} \lambda^{k_0+\dots+k_{n}} \int_{A_{k_0,\dots,k_{n},0}} \norm{(L\circ(\nabla_V)^{k_n}\circ\cdots \circ L\circ (\nabla_V)^{k_0}(P)}_{K+2} dt_1\dots dt_{2(n+1)+k_0+\dots+k_n} \\
		&\leq \left(\deg(V)\right)^{4(n+1)} C_P \frac{(5(n+1))^{4(n+1)}}{(1-\lambda C_V (K+2)^{\deg V -1})^{5(n+1)}}.
	\end{align*}
	Hence the conclusion.	
\end{proof}

\section{Proof of Corollaries \ref{sdkjnvkdm} and \ref{doifvpelmv}}

Before getting to the actual proof we will need the following proposition.

\begin{prop}
	\label{osmocsmcdiwnjd}
	Given $P,Q,V\in\A_d$, one can view those polynomials as elements of $\A_{d+1}$, then as long as $\lambda$ is small enough for the power series 
	$$\alpha_0^V(\lambda,\cdot) = \sum_{k\geq 0} (-\lambda)^{k} \int_{A_{k}} \tau\left(\nabla_V^{k}(\cdot)\left(x^{T_{k}}\right)\right) dt_1\dots dt_{k},
	$$ 
	to converge, we have that 
	\begin{equation}
		\label{soicdmsodcskmcd}
		\alpha_0^V(\lambda,PX_{d+1}QX_{d+1}) = \alpha_0^V(\lambda,P)\alpha_0^V(\lambda,Q).
	\end{equation}
	In particular, if $K,V$ and $\lambda$ satisfy the hypotheses of Theorem \ref{maintherorem2}, then
	\begin{equation*}
		\frac{\E\left[\ts_N\left(P_1(X^N)\right)\cdots \ts_N\left(P_l(X^N)\right) e^{-\lambda N\tr_N\left(V(X^N)\right)}  \1_{\forall i, ||X_i^N||\leq K}\right]}{\E\left[ e^{-\lambda N\tr_N\left(V(X^N)\right)}  \1_{\forall i, ||X_i^N||\leq K}\right]} = \alpha_0^V(\lambda,P_1)\cdots \alpha_0^V(\lambda,P_l) + \mathcal{O}\left(\frac{1}{N^2}\right).
	\end{equation*}
\end{prop}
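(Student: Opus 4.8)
The proof rests on the elementary \emph{Gaussian trick}: if $Y^N$ is a GUE matrix of size $N$ that is independent of everything else, then for all $A,B\in\M_N(\C)$ one has $\E\left[\ts_N(AY^NBY^N)\right]=\ts_N(A)\ts_N(B)$, which follows from Gaussian integration by parts exactly as in the computation \eqref{smcosmcds} (it amounts to $\E[Y^N\otimes Y^N]=\tfrac1N\sum_{r,s}E_{r,s}\otimes E_{s,r}$). The first step is to record a harmless extension of Theorems \ref{maintherorem} and \ref{maintherorem2}: if one adjoins to $X^N$ a finite family $Y^N=(Y_1^N,\dots,Y_q^N)$ of independent GUE matrices on which neither $V$ nor (in the cut-off case) the indicator depends, then for every $R\in\C\langle X_1,\dots,X_{d+q}\rangle$ the same expansions hold for $\ts_N(R(X^N,Y^N))$, with coefficients again given by $\alpha_n^V(\lambda,R)$ as in \eqref{smcskmdcskmcslk}. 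The proof is unchanged: in the master equations (Theorems \ref{mainlemma}--\ref{mainlemma2}) the operators $\nabla_V$ and $L$ differentiate only the first $d$ coordinates, so one applies Lemma \ref{ippcut} for the cut-off coordinates and ordinary Gaussian integration by parts \eqref{3IPPG2} for the $Y_j$'s (which contributes no extra error term); the moment bounds of Assumption \ref{kdmcslcs0} are trivial for the $Y_j$'s since they are GUE and independent of $X^N$, and the cut-off error terms are still controlled by Lemma \ref{vkmslmlskmvlsmv}. In particular $\alpha_0^V(\lambda,R)$ is the $N\to\infty$ limit of $\E[\ts_N(R(X^N,Y^N))\,e^{-\lambda N\tr_N(V(X^N))}\,\1_{\forall i,\|X_i^N\|\le K}]/\E[e^{-\lambda N\tr_N(V(X^N))}\,\1_{\forall i,\|X_i^N\|\le K}]$, and likewise without cut-off under Assumption \ref{kdmcslcs0}.

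\textbf{Proof of \eqref{soicdmsodcskmcd}.} Apply the extension with $q=1$, $Y^N=X_{d+1}^N$ and $R=PX_{d+1}QX_{d+1}$. Averaging over $X_{d+1}^N$ first and using the Gaussian trick gives, at finite $N$, $\E_{X_{d+1}^N}[\ts_N(P(X^N)X_{d+1}^NQ(X^N)X_{d+1}^N)]=\ts_N(P(X^N))\ts_N(Q(X^N))$; hence in the cut-off model $\alpha_0^V(\lambda,PX_{d+1}QX_{d+1})=\lim_N\E[\ts_N(P(X^N))\ts_N(Q(X^N))\,e^{-\lambda N\tr_N(V(X^N))}\,\1]/\E[e^{-\lambda N\tr_N(V(X^N))}\,\1]$. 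It remains to factorize this limit, i.e.\ to show that the covariance of $\ts_N(P)$ and $\ts_N(Q)$ under the cut-off model tends to $0$. For $\lambda$ small enough, $X\mapsto\tr_N(\lambda V(X)+\tfrac12\sum_iX_i^2)$ is strictly convex on the convex region $\{\forall i,\ \|X_i\|\le K\}$, so the model is a log-concave measure on a convex set with log-concavity constant of order $N$; a Brascamp--Lieb (Poincar\'e) inequality then yields $\var(\ts_N(P))\le\frac{C}{N}\,\E\big[\norm{\nabla\ts_N(P(X))}_{HS}^2\big]=\mathcal O(N^{-2})$, since on the cut-off region $\norm{\nabla_{X_i}\ts_N(P(X))}_{HS}\le N^{-1/2}\norm{\D_iP}_K$ in the notation of \eqref{sodcmosmcowkmcd}. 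By Cauchy--Schwarz the covariance is $\mathcal O(N^{-2})$, the ratio factorizes, and \eqref{soicdmsodcskmcd} holds for $\lambda\in[0,c_{V,K}]$. Since both sides of \eqref{soicdmsodcskmcd} are analytic in $\lambda$ on the disc of convergence of the defining series, the identity extends to that whole disc, which is the stated conclusion.

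\textbf{Proof of the product formula.} Iterating the Gaussian trick, $\ts_N(P_1(X^N))\cdots\ts_N(P_l(X^N))=\E_{Y^N}[\ts_N(R(X^N,Y^N))]$ with $Y^N=(X_{d+1}^N,\dots,X_{d+l-1}^N)$ independent GUE matrices independent of $X^N$ and $R=P_1X_{d+1}P_2X_{d+1}X_{d+2}P_3X_{d+2}\cdots X_{d+l-1}P_lX_{d+l-1}$. Since $e^{-\lambda N\tr_N(V(X^N))}\,\1_{\forall i,\|X_i^N\|\le K}$ does not involve $Y^N$, the extended Theorem \ref{maintherorem2} applied to $R$ gives $\E[\ts_N(P_1)\cdots\ts_N(P_l)\,e^{-\lambda N\tr_N(V)}\,\1]/\E[e^{-\lambda N\tr_N(V)}\,\1]=\alpha_0^V(\lambda,R)+\mathcal O(N^{-2})$, and $l-1$ successive applications of \eqref{soicdmsodcskmcd} (in the $d+l-1$ variables, with $V$ still depending only on $X_1,\dots,X_d$) yield $\alpha_0^V(\lambda,R)=\alpha_0^V(\lambda,P_1)\cdots\alpha_0^V(\lambda,P_l)$. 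The main obstacle is the concentration estimate in the second step ensuring the covariance is $\mathcal O(N^{-2})$ (equivalently, that $\var(\ts_N(P))=o(1)$ for the cut-off model); everything else reduces to the routine extension of the main theorems to auxiliary un-cut-off GUE variables and to bookkeeping with the Gaussian trick.
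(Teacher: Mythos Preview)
Your argument is correct, but the route you take to \eqref{soicdmsodcskmcd} is genuinely different from the paper's. The paper proves \eqref{soicdmsodcskmcd} by a purely algebraic computation on the series defining $\alpha_0^V$: it introduces the variant $\widetilde{\nabla}_V$ obtained by replacing $\D_{i,h}Q\cdot\D_iV$ by $\partial_{i,h}Q\#\D_iV$, shows by induction that $\nabla_V^k(P)=\sum_l A_lB_l$ and $\widetilde{\nabla}_V^k(P)=\sum_l B_lA_l$ so that their traces agree, and then expands $\tau(\widetilde{\nabla}_V^k(PX_{d+1}QX_{d+1})(x^{T_k}))$. Since $V\in\A_d$, each $\widetilde{\nabla}_V$ acts either on $P$ or on $Q$, and the free Schwinger--Dyson equation for $x_{d+1}$ factorizes the trace. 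A change-of-variables on the simplex $A_k$ then shows that summing over all ways of distributing the $k$ applications of $\nabla_V$ between $P$ and $Q$ yields precisely the Cauchy product, term by term in $k$. This works directly for arbitrary $V\in\A_d$ and never touches a matrix model.

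Your approach, by contrast, passes through the finite-$N$ model: you identify both sides of \eqref{soicdmsodcskmcd} as $N\to\infty$ limits via (the extended) Theorem~\ref{maintherorem2}, and force the factorization with a Brascamp--Lieb Poincar\'e inequality for the log-concave cut-off measure at small $\lambda$, then extend by analyticity. This is softer and perhaps more conceptual (asymptotic factorization is the expected phenomenon), but it imports more machinery and, as you note, the concentration step is the only nontrivial input. One small gap: your route a priori requires $V$ to be trace self-adjoint (the hypothesis of Theorem~\ref{maintherorem2}), whereas the first assertion of the proposition is stated for arbitrary $V\in\A_d$; this is easily patched by observing that each Taylor coefficient in $\lambda$ on both sides of \eqref{soicdmsodcskmcd} is a complex polynomial in the coefficients of $V$, so equality on the totally real subspace of trace self-adjoint potentials propagates to all of $\A_d$.

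For the second assertion (the product formula under the cut-off model), your proof and the paper's are essentially the same: reduce $\ts_N(P_1)\cdots\ts_N(P_l)$ to a single normalized trace via the Gaussian trick with auxiliary GUE matrices, and then apply Theorem~\ref{maintherorem2}. The paper states this in one line; you are more explicit about the harmless extension of Theorem~\ref{maintherorem2} to auxiliary un-cut-off GUE variables, which is indeed the point that needs a word.
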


\begin{proof}
	First let us define 
	$$
	\begin{array}{ccccc}
		\widetilde{\nabla}_V & : & \oplus_{H} \A_{d}^H & \to & \oplus_{H} \A_{d}^{\{H,G\}} \\
		& & \oplus_H P_H & \mapsto & \oplus_H \widetilde{\nabla}_V^{H,T_H}\left(P_H\right) \\
	\end{array},
	$$
	with 
	\begin{align*}
		\widetilde{\nabla}_V^{H,T_{n+1}}(Q) \deq \frac{1}{2}\sum_{1\leq i\leq d}\ \sum_{0\leq h\leq n}\ e^{(\widetilde{t}_h-t_{n+1})/2}\ \partial_{i,h}Q(G^+(X))\# \D_iV(X^{\emptyset}),
	\end{align*}
	where the notation $\#$ is as in Definition \ref{sknclks}. Then by induction we claim that for any $P\in\A_d$, one can write $\nabla_V^{k}(P)$ and $\widetilde{\nabla}_V^{k}(P)$ as 
	\begin{equation}
		\label{ermcwmcw}
		\nabla_V^{k}(P) = \sum_l A_lB_l,\quad \widetilde{\nabla}_V^{k}(P) = \sum_l B_lA_l,
	\end{equation} 
	with $A_l,B_l\in \A_d^{\{G,\dots,G\}}$, and in particular
	\begin{align*}
		\tau\left(\nabla_V^{k}(P)\left(x^{T_{k}}\right)\right) = \tau\left(\widetilde{\nabla}_V^{k}(P)\left(x^{T_k}\right)\right).
	\end{align*}
	Indeed, if Equation \eqref{ermcwmcw} is satisfied for a given $k$, then with the notation $A\otimes B \widetilde{\#} C =BCA$,
	\begin{align*}
		\nabla_V^{k+1}(P) &= \sum_l \nabla_V(A_lB_l) \\
		&= \sum_l \frac{1}{2}\sum_{1\leq i\leq d}\ \sum_{0\leq h\leq n}\ e^{(\widetilde{t}_h-t_{n+1})/2}\ \D_{i,h}(A_lB_l)(G^+(X)) \D_iV(X^{\emptyset}) \\
		&= \sum_l \frac{1}{2}\sum_{1\leq i\leq d}\ \sum_{0\leq h\leq n}\ e^{(\widetilde{t}_h-t_{n+1})/2}\ \Big(\partial_{i,h}A_l(G^+(X))\widetilde{\#} B_l(G^+(X)) \\
		&\quad\quad\quad\quad\quad\quad\quad\quad\quad\quad\quad\quad\quad\quad\quad\quad\quad\quad\quad+ \partial_{i,h}B_l(G^+(X))\widetilde{\#} A_l(G^+(X))\Big)\D_iV(X^{\emptyset}). 
	\end{align*}
	On the other hand, 
	\begin{align*}
		\widetilde{\nabla}_V^{k+1}(P) &= \sum_l \widetilde{\nabla}_V(B_lA_l) \\
		&= \sum_l \frac{1}{2}\sum_{1\leq i\leq d}\ \sum_{0\leq h\leq n}\ e^{(\widetilde{t}_h-t_{n+1})/2}\ \partial_{i,h}(B_lA_l)(G^+(X))\# \D_iV(X^{\emptyset}) \\
		&= \sum_l \frac{1}{2}\sum_{1\leq i\leq d}\ \sum_{0\leq h\leq n}\ e^{(\widetilde{t}_h-t_{n+1})/2}\ \Big(B_l(G^+(X)) \left(\partial_{i,h}A_l(G^+(X))\# \D_iV(X^{\emptyset})\right)  \\
		&\quad\quad\quad\quad\quad\quad\quad\quad\quad\quad\quad\quad\quad\quad\quad+ \left(\partial_{i,h}B_l(G^+(X))\#\D_iV(X^{\emptyset}) \right)A_l(G^+(X))\Big). 
	\end{align*}
	Hence Equation \eqref{ermcwmcw} is satisfied for $k+1$. Consequently, we get that
	\begin{align*}
		&\tau\left(\nabla_V^{k}(PX_{d+1}QX_{d+1})\left(x^{T_k}\right)\right) \\
		&= \tau\left(\widetilde{\nabla}_V^{k}(PX_{d+1}QX_{d+1})\left(x^{T_k}\right)\right) \\
		&= \sum_{i_1,\dots,i_k\in\{0,1\}} \tau\left(\widetilde{\nabla}_V^{i_k} \cdots \widetilde{\nabla}_V^{i_1}(P)\left(x^{T_k}\right) x^{T_k}_{d+1,\{1,\dots,k\}} \widetilde{\nabla}_V^{1-i_k} \cdots \widetilde{\nabla}_V^{1-i_1}(Q)\left(x^{T_k}\right) x^{T_k}_{d+1,\{1,\dots,k\}} \right) \\
		&= \sum_{i_1,\dots,i_k\in\{0,1\}} \tau\left(\widetilde{\nabla}_V^{i_k} \cdots \widetilde{\nabla}_V^{i_1}(P)\left(x^{T_k}\right)\right) \tau\left(\widetilde{\nabla}_V^{1-i_k} \cdots \widetilde{\nabla}_V^{1-i_1}(Q)\left(x^{T_k}\right) \right) \\
		&= \sum_{i_1,\dots,i_k\in\{0,1\}} \tau\left({\nabla}_V^{i_k} \cdots {\nabla}_V^{i_1}(P)\left(x^{T_k}\right)\right) \tau\left({\nabla}_V^{1-i_k} \cdots {\nabla}_V^{1-i_1}(Q)\left(x^{T_k}\right) \right),
	\end{align*}
	where we used Proposition \ref{3SDE} in the last line, and 
	$${\nabla}_V^{1} = {\nabla}_V, \quad {\nabla}_V^{0}: P\in\A_d^H\mapsto P(G^+(X))\in \A_d^{\{H,G\}}.$$
	Besides, if $j_1,\dots,j_p$ is the list of all indices sorted by increasing order such that $i_{{j_l}}=1$ for every $l$, then with $\widetilde{T}_l^1 = \{ t_{j_1}, \dots, t_{j_l} \}$,
	\begin{align*}
		&\tau\left({\nabla}_V^{i_k} \cdots {\nabla}_V^{i_1}(P)\left(x^{T_k}\right)\right) = \tau\left({\nabla}_V^{\{G,\dots,G\},\widetilde{T}_p^1} \circ \cdots\circ {\nabla}_V^{{G},\widetilde{T}_2^1}\circ {\nabla}_V^{{G},\widetilde{T}_1^1}(P)\left(x^{T_k}\right)\right).
	\end{align*}
	Consequently, with $m_1,\dots,m_{k-p}$ the list of all indices sorted by increasing order such that $i_{{m_l}}=0$ for every $l$, and with $\widetilde{T}_l^2 = \{ t_{m_1},\dots, t_{m_l} \}$, one has that
	\begin{align*}
		&\int_{A_k} \tau\left({\nabla}_V^{i_k} \cdots {\nabla}_V^{i_1}(P)\left(x^{T_k}\right)\right) \tau\left({\nabla}_V^{1-i_k} \cdots {\nabla}_V^{1-i_1}(Q)\left(x^{T_k}\right) \right) dt_1\dots dt_{k}\\
		&= \int_{A_k} \tau\left({\nabla}_V^{\{G,\dots,G\},\widetilde{T}_p^1} \circ \cdots\circ {\nabla}_V^{{G},\widetilde{T}_2^1}\circ {\nabla}_V^{{G},\widetilde{T}_1^1}(P)\left(x^{\widetilde{T}_p^1}\right)\right) \\
		&\quad\quad\quad\quad\quad\quad\quad\quad\quad\quad\quad\quad\quad\quad\quad\quad\quad \tau\left({\nabla}_V^{\{G,\dots,G\},\widetilde{T}_{k-p}^2} \circ \cdots\circ {\nabla}_V^{{G},\widetilde{T}_2^2}\circ {\nabla}_V^{{G},\widetilde{T}_1^2}(P)\left(x^{\widetilde{T}_{k-p}^2}\right)\right) dt_1\dots dt_{k}.
	\end{align*}
	Then one makes the change of variables $\sigma_{j_1,\dots,j_p}$ defined by $(\sigma_{j_1,\dots,j_p}(t))_{j_l}=t_l$ and $(\sigma_{j_1,\dots,j_p}(t))_{m_l}=t_{p+l}$. Thus, with the convention $j_0=0$, $j_{p+1}=k+1$, $t_0=0$ and $t_{k+1}=\infty$, one has
	\begin{align*}
		\sigma_{j_1,\dots,j_p}^{-1}\left(A_k\right) = &\{t_1,\dots,t_k\in\R^+\ |\ t_p\geq\dots\geq t_1\geq 0 \text{ and } t_{k}\geq\dots\geq t_{p+1}\geq 0  \} \\
		&\cap \{t_1,\dots,t_k\in\R^+\ |\ \forall l\in [1, k-p],\ t_{l+p}\in (t_a,t_{a+1}) \text{ where } j_a<m_l<j_{a+1} \}.
	\end{align*}
	Thus $\{t_1,\dots,t_k\in\R^+\ |\ t_p\geq\dots\geq t_1\geq 0 \text{ and } t_{k}\geq\dots\geq t_{p+1}\geq 0  \}$ is the disjoint union of the sets $\sigma_{j_1,\dots,j_p}^{-1}\left(A_k\right)$ for $i_1,\dots,i_k\in\{0,1\}$ such that $\sum_l i_l = p$. Consequently, with $T_{l}\setminus T_p = \{t_{p+1},\dots, t_l\}$,
	\begin{align*}
		&\int_{A_k} \tau\left(\nabla_V^{k}(PX_{d+1}QX_{d+1})\left(x^{T_k}\right)\right) \\ 
		&=\sum_{p=0}^k\sum_{\substack{i_1,\dots,i_k\in\{0,1\},\ \sum_l i_l = p}} \int_{A_{k}} \tau\left({\nabla}_V^{i_k} \cdots {\nabla}_V^{i_1}(P)\left(x^{T_k}\right)\right) \tau\left({\nabla}_V^{1-i_k} \cdots {\nabla}_V^{1-i_1}(Q)\left(x^{T_k}\right) \right) dt_1\dots dt_{k} \\
		&=\sum_{p=0}^k\sum_{\substack{i_1,\dots,i_k\in\{0,1\},\ \sum_l i_l = p}} \int_{\sigma_{j_1,\dots,j_p}^{-1}\left(A_k\right)} \tau\left({\nabla}_V^{\{G,\dots,G\},T_p} \circ \cdots\circ {\nabla}_V^{{G},T_2}\circ {\nabla}_V^{{G},T_1}(P)\left(x^{T_p}\right)\right) \\
		&\quad\quad\quad\quad\quad\quad\quad\quad\quad\quad\quad\quad\quad\quad\quad\quad\quad \tau\left({\nabla}_V^{\{G,\dots,G\},T_k\setminus T_p} \circ \cdots\circ {\nabla}_V^{{G},T_{p+1}\setminus T_p}(P)\left(x^{T_k\setminus T_p}\right)\right) dt_1\dots dt_{k} \\
		&= \sum_{p=0}^k \int_{A_p} \tau\left(\nabla_V^{p}(P)\left(x^{T_{p}}\right)\right) dt_1\dots dt_{p} \int_{A_{k-p}}\tau\left(\nabla_V^{k-p}(P)\left(x^{T_{k-p}}\right)\right) dt_1\dots dt_{k-p}.
	\end{align*}
	Hence we get that
	\begin{align*}
		&\alpha_0^V(\lambda,PX_{d+1}QX_{d+1}) \\
		&= \sum_{k\geq 0} (-\lambda)^k \int_{A_k} \tau\left(\nabla_V^{k}(PX_{d+1}QX_{d+1})\left(x^{T_k}\right)\right) dt_1\dots dt_{k} \\
		&= \sum_{k\geq 0} (-\lambda)^k \sum_{p=0}^k \int_{A_p} \tau\left(\nabla_V^{p}(P)\left(x^{T_{p}}\right)\right) dt_1\dots dt_{p} \int_{A_{k-p}}\tau\left(\nabla_V^{k-p}(P)\left(x^{T_{k-p}}\right)\right) dt_1\dots dt_{k-p} \\
		&= \alpha_0^V(\lambda,P)\alpha_0^V(\lambda,Q).
	\end{align*}
	This proves Equation \eqref{soicdmsodcskmcd}. Finally, thanks to Gaussian integration by parts, see Equation \eqref{3IPPG2}, one has that for $K$ sufficiently large,
	\begin{align*}
		&\E\left[\ts_N\left(P(X^N)\right) \ts_N\left(Q(X^N)\right) e^{-\lambda N\tr_N\left(V(X^N)\right)}  \1_{\forall i, ||X_i^N||\leq K}\right] \\
		&= \E\left[\ts_N\left(P(X^N)X_{d+1}^NQ(X^N)X_{d+1}^N\right) e^{-\lambda N\tr_N\left(V(X^N)\right)}  \1_{\forall i, ||X_i^N||\leq K}\right].
	\end{align*}
	Then one can use Theorem \ref{maintherorem2} to conclude.
\end{proof}

\subsection{Application to free entropy}

\begin{proof}[Proof of Corollary \ref{doifvpelmv}]
	For $n$ larger than the degree of $V$, thanks to Equation \eqref{soicdjsomcs}, if $X^N$ is an element of $\Gamma_R(\alpha_0^V(\lambda,\cdot),n,N,\varepsilon)$ then $\ts_N(V(X^N)) =\alpha_0^V(\lambda,V) + \mathcal{O}(\varepsilon)$, thus
	\begin{align*}
		\P\left( X^N\in \Gamma_R(\alpha_0^V(\lambda,\cdot),n,N,\varepsilon) \right) &= \int_{\Gamma_R(\alpha_0^V(\lambda,\cdot),n,N,\varepsilon)} d\P(X^N) \\
		&= \int_{\Gamma_R(\alpha_0^V(\lambda,\cdot),n,N,\varepsilon)} e^{\lambda N\tr_N(V(X^N)) - \lambda N\tr_N(V(X^N))}d\P(X^N) \\
		&= e^{N^2(\lambda\ \alpha_0^V(\lambda,V) + \mathcal{O}(\varepsilon))} \int_{\Gamma_R(\alpha_0^V(\lambda,\cdot),n,N,\varepsilon)} e^{- \lambda N\tr_N(V(X^N))}d\P(X^N) \\
		&= e^{N^2(\lambda\ \alpha_0^V(\lambda,V) - \int_0^{\lambda} \alpha_0^V(\nu,V)\ d\nu + \mathcal{O}(\varepsilon) + \mathcal{O}(N^{-2}))}  \mu^N_{\lambda V}\left(X^N\in \Gamma_R(\alpha_0^V(\lambda,\cdot),n,N,\varepsilon)\right).
	\end{align*}
	where we used Theorem \ref{maintherorem} in the last line. Besides, thanks to Assumption \ref{kdmcslcs0}, for $R>C$,
	\begin{align*}
		&\mu^N_{\lambda V}\left(X^N\notin \Gamma_R(\alpha_0^V(\lambda,\cdot),n,N,\varepsilon)\right) \\
		&\leq \mu_{\lambda V}^N\left(\max_{i\leq i\leq d} \norm{X_i} > R\right) + \sum_{M\in\A_d \text{monomial},\ \deg M \leq n} \mu_{\lambda V}^N\left( \left| \alpha_0^V(\lambda,M) - \ts_N\left(M(X^N)\right) \right| \geq \varepsilon\right) \\
		&\leq \frac{\mu_{\lambda V}^N\left(\norm{X_1}^{2u_N}\right) +\dots+ \mu_{\lambda V}^N\left(\norm{X_d}^{2u_N}\right)}{R^{2u_N}} \\
		&\quad\quad\quad\quad\quad\quad\quad\quad\quad\quad\quad\quad\quad\quad\quad\quad\quad\quad + \sum_{M\in\A_d \text{monomial},\ \deg M \leq n} \mu_{\lambda V}^N\left( \left| \alpha_0^V(\lambda,M) - \ts_N\left(M(X^N)\right) \right|^2 \right) \varepsilon^{-2} \\
		&\leq \frac{\mu_{\lambda V}^N\left(\ts_N\left({X_1}^{2u_N}\right)\right) +\dots+ \mu_{\lambda V}^N\left(\ts_N\left({X_d}^{2u_N}\right)\right)}{R^{2u_N}}N^{\frac{1}{2u_N}} \\
		&\quad\quad\quad\quad\quad\quad\quad\quad\quad\quad\quad\quad\quad\quad\quad\quad\quad\quad + \sum_{M\in\A_d \text{monomial},\ \deg M \leq n} \mu_{\lambda V}^N\left( \left| \alpha_0^V(\lambda,M) - \ts_N\left(M(X^N)\right) \right|^2 \right) \varepsilon^{-2} \\
		&\leq d\left(\frac{C}{R}\right)^{2u_N}N^{\frac{1}{2u_N}} + \varepsilon^{-2} \sum_{M\in\A_d \text{monomial},\ \deg M \leq n} \mu_{\lambda V}^N\left( \ts_N\left(M(X^N)\right)^2 \right) - \alpha_0^V(\lambda,M)^2 + \mathcal{O}\left(N^{-2}\right)  \\
		&\leq \mathcal{O}\left(N^{-2}\right),
	\end{align*}
	where we used Theorem \ref{maintherorem2} and Proposition \ref{osmocsmcdiwnjd} in the last two lines. Hence
	\begin{align*}
		\P\left( X^N\in \Gamma_R(\alpha_0^V(\lambda,\cdot),n,N,\varepsilon) \right) &= e^{N^2(\alpha_0^V(\lambda,V) - \int_0^{\lambda} \alpha_0^V(\nu,V)\ d\nu + \mathcal{O}(\varepsilon) + \mathcal{O}(N^{-2}))} (1-\mathcal{O}(N^{-2})),
	\end{align*}
	and consequently for $R$ and $n$ large enough,
	\begin{align}
		\label{sodmclsdmk}
		\limsup_{N\to\infty} \frac{1}{N^2} \log\P\left( X^N\in \Gamma_R(\alpha_0^V(\lambda,\cdot),n,N,\varepsilon) \right) &= \lambda\ \alpha_0^V(\lambda,V) - \int_0^{\lambda} \alpha_0^V(\nu,V)\ d\nu + \mathcal{O}(\varepsilon).
	\end{align}
	Since the quantity 
	$$\limsup_{N\to\infty} \frac{1}{N^2} \log\P\left( X^N\in \Gamma_R(\alpha_0^V(\lambda,\cdot),n,N,\varepsilon) \right)$$
	is decreasing with respect to $n$, we find for $R$ large enough that
	\begin{align}
		\label{wocnsoc}
		&\inf_{n\in\N} \inf_{\varepsilon>0} \limsup_{N\to\infty} \frac{1}{N^2} \log\P\left( X^N\in \Gamma_R(\alpha_0^V(\lambda,\cdot),n,N,\varepsilon) \right) \\ \nonumber
		&= \lim_{n\to\infty} \inf_{\varepsilon>0} \limsup_{N\to\infty} \frac{1}{N^2} \log\P\left( X^N\in \Gamma_R(\alpha_0^V(\lambda,\cdot),n,N,\varepsilon) \right) \\ \nonumber
		&= \lambda\ \alpha_0^V(\lambda,V) - \int_0^{\lambda} \alpha_0^V(\nu,V)\ d\nu \\
		&= \int_0^{\lambda} \nu\ \frac{d}{d\nu}\alpha_0^V(\nu,V)\ d\nu. \nonumber
	\end{align}
	Hence the conclusion. Finally note that Equation \eqref{sodmclsdmk} still stands when one replaces $\limsup$ by $\liminf$. Besides the quantity
	$$\liminf_{N\to\infty} \frac{1}{N^2} \log\P\left( X^N\in \Gamma_R(\alpha_0^V(\lambda,\cdot),n,N,\varepsilon) \right)$$
	is also decreasing with respect to $n$, hence Equation \eqref{wocnsoc} also stands when one replaces $\limsup$ by $\liminf$.
\end{proof}

\subsection{Application to map enumeration}

\label{ksjdcdnv}

Let us first define the objects that we shall enumerate. First we give the general definition of a map, then we introduce a coloring system.
\begin{defi}
	\label{socdmsomc}
	An embedded graph of genus $g$ is a connected graph $\Gamma$ embedded into a connected orientable surface $S$ of genus g such that $S\setminus\Gamma$ is a disjoint union of connected components, called faces, each of them homeomorphic to an open disk. In this paper we assume that each vertex of $\Gamma$ is endowed with a distinguished edge. Then, two embedded graphs $\Gamma_i \subset S_i$, $i = 1, 2$, are isomorphic if there exists an orientation preserving homeomorphism $\phi : S_1 \to S_2$ such that:
	\begin{itemize}
		\item $\phi_{|\Gamma_1}$ is a graph isomorphism between $\Gamma_1$ and $\Gamma_2$,
		\item we can enumerate the vertices $v_1^i,\dots,v_n^i$ of $\Gamma_i$ such that $\phi(v_j^1) = v_j^2$,
		\item $\phi$ maps the distinguished edge of every vertex $v$ of $\Gamma_1$ to the distinguished edge of $\phi(v)$.
	\end{itemize}  
	Finally a map is an isomorphism class of embedded graphs.
\end{defi}

\begin{defi}
	\label{socdmsomc2}
	Consider $d$ colors $\{1,\dots,d\}$, one for each variable $X_i$. Let $\Gamma$ be a colored graph, i.e. an embedded graph such that every edge is associated with a unique color. We say that one of its vertices is of type $q$, for a monomial $q = X_{i_1}\dots X_{i_p}$, if starting from its distinguished edge and enumerating the edges linked to this vertex in clockwise order with respect to the orientation of the surface, the $k$-th edge is of color $i_k$. A colored map is then an isomorphism class of colored embedded graphs.
\end{defi}

The proof of Corollary \ref{sdkjnvkdm} relies mostly on the fact that there already exists a Taylor expansion similar to the one computed in Theorem \ref{maintherorem}. However in order to avoid any sort of convexity assumption on the potential $V$, we want this expansion to hold in the case where we have a cut-off, i.e. as in Theorem \ref{maintherorem2}. It turns out that adapting the proof of \cite{segala} to this case is rather straightforward. We only need the following proposition and the rest of the proof will follow.

\begin{prop}
	\label{dkcsoiknds}
	Given polynomials $P_1,\cdots, P_l\in\A_d$ we define
	$$ \nu^N(P_1\otimes\cdots\otimes P_l) =  \frac{N^{\widehat{l}}\ \E\left[ \left(\mu_N-\alpha_0^V(\lambda,\cdot)\right)^{\otimes l}(P_1\otimes\dots\otimes P_l) e^{-\lambda N\tr_N\left(V(X^N)\right)} \1_{\forall i, ||X_i^N||\leq K} \right]}{\E\left[ e^{-\lambda N\tr_N\left(V(X^N)\right)} \1_{\forall i, ||X_i^N||\leq K}\right]},$$
	where $\mu_N(P) := \ts_N(P(X^N))$ and $\widehat{l}= l$ if $l$ is even and $l+1$ otherwise. If $Q$ is a monomial, we set $\overline{Q} = \frac{Q}{\deg(Q)}$, and by linearity we extend this definition to any polynomial. Then we set
	$$\Xi(P) \deq P + \lambda\sum_{i=1}^d \D_i\overline{P}\D_i V - \sum_{i=1}^d \left(\id\otimes\alpha_0^V(\lambda,\cdot) + \alpha_0^V(\lambda,\cdot)\otimes\id\right)(\partial_i\D_i \overline{P}),$$
	and $\nu^N$ satisfies the following equations, if $l$ is even,
	\begin{align}
		\nu^N(\Xi(P_1)\otimes P_2\otimes\cdots\otimes P_l) =&\ \sum_{1\leq i\leq d,2\leq r\leq l}  \alpha_0^V(\lambda,\D_i\overline{P_1} \D_iP_r)\ \nu^N(P_2\otimes\cdots\check{P_r}\cdots\otimes P_l) \\
		&+ \frac{1}{N^2}  \sum_{1\leq i\leq d,2\leq r\leq l} \nu^N(\D_i\overline{P_1} \D_iP_r\otimes P_2\otimes\cdots\check{P_r}\cdots\otimes P_l)  \nonumber\\
		&+ \frac{1}{N^2}  \sum_{1\leq i\leq d} \nu^N(\partial_i\D_i\overline{P_1} \otimes P_2\otimes\cdots\otimes P_l) \nonumber \\
		&+ N^{l-2}e^{-\alpha N} \mathcal{E}(P_1,\dots,P_l), \nonumber
	\end{align}
	where $\check{P_r}$ means that $P_r$ is omitted, $\alpha>0$ is a universal constant, and there exists a constant $C_l$ such that $$\left| \mathcal{E}(P_1,\dots,P_l) \right| \leq C_l \norm{P_1}_K \dots \norm{P_l}_K,$$ 
	where $\norm{.}_L$ is as in Equation \eqref{sodcmosmcowkmcd}. Moreover if $l$ is odd
	\begin{align}
		\label{dsoicksm}
		\nu^N(\Xi(P_1)\otimes P_2\otimes\cdots\otimes P_l) =&\ \sum_{1\leq i\leq d,2\leq r\leq l}  \alpha_0^V(\lambda,\D_i\overline{P_1} \D_iP_r)\ \nu^N(P_2\otimes\cdots\check{P_r}\cdots\otimes P_l) \\
		&+ \sum_{1\leq i\leq d,2\leq r\leq l} \nu^N(\D_i\overline{P_1} \D_iP_r\otimes P_2\otimes\cdots\check{P_r}\cdots\otimes P_l)  \nonumber\\
		&+ \sum_{1\leq i\leq d} \nu^N(\partial_i\D_i\overline{P_1} \otimes P_2\otimes\cdots\otimes P_l) \nonumber \\
		& + N^{l-2}e^{-\alpha N} \mathcal{E}(P_1,\dots,P_l). \nonumber
	\end{align}
\end{prop}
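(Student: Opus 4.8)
\emph{Sketch.} The plan is to run the change of variables underlying Lemma~\ref{ippcut} one more time, now on the integrand that already carries the extra scalar factor $\prod_{r=2}^{l}\big(\ts_N(P_r(X))-\alpha_0^V(\lambda,P_r)\big)$, which is itself a polynomial in the entries of $X$; write $\delta_N:=\mu_N-\alpha_0^V(\lambda,\cdot)$ and $\mu_N(Q)=\ts_N(Q(X^N))$. For each $i\in[1,d]$ I would apply the diffeomorphism $\Phi_\varepsilon$ of that proof, with potential $\lambda V$, to $X\mapsto e_r^*\D_i\overline{P_1}(X)e_s\,\prod_{r=2}^{l}\big(\ts_N(P_r(X))-\alpha_0^V(\lambda,P_r)\big)\,e^{-N\tr_N(\lambda V(X))}\1_{\forall j,\,\norm{X_j}\le K}$, differentiate the resulting equality at $\varepsilon=0$ (which gives $0$), then sum over $r,s$, divide by $N^2$, and sum over $i$. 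The derivative hitting the weight reproduces $\ts_N\big(\D_i\overline{P_1}(\lambda\D_iV+X_i^N)\big)$; the derivative hitting $\D_i\overline{P_1}$ reproduces $\ts_N\!\otimes\!\ts_N(\partial_i\D_i\overline{P_1})$; the derivative hitting a factor $\ts_N(P_r)$ reproduces, because of the extra $N^{-1}$ in a normalised trace, the term $\tfrac{1}{N^{2}}\ts_N(\D_i\overline{P_1}\D_iP_r)\prod_{r'\ne r}\delta_N(P_{r'})$; and the Jacobian ($h'$) term is bounded just as in Lemma~\ref{ippcut} by $\tfrac{1}{N^{2}}\norm{\D_i\overline{P_1}}_K\big(\prod_r\norm{P_r}_K\big)\,\mu_{\lambda V,K}^N\big(\max_j\norm{X_j}\ge M\big)$ up to constants, using that on the cut-off event each $|\ts_N(P_r)-\alpha_0^V(\lambda,P_r)|$ is at most a multiple of $\norm{P_r}_K$.

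Dividing by $\E\!\big[e^{-\lambda N\tr_N(V(X^N))}\1\big]$ (write $\langle\cdot\rangle$ for the resulting $\mu_{\lambda V,K}^N$-average), rearranging, and using $\ts_N(Q)=\sum_i\ts_N(\D_i\overline Q\,X_i^N)$ so that the weight contribution reassembles as $\mu_N(P_1)+\lambda\sum_i\ts_N(\D_i\overline{P_1}\D_iV)$, together with the elementary identity $\ts_N(A)\ts_N(B)-\ts_N(A)\alpha_0^V(\lambda,B)-\alpha_0^V(\lambda,A)\ts_N(B)=\delta_N(A)\delta_N(B)-(\alpha_0^V\!\otimes\!\alpha_0^V)(A\otimes B)$ applied leg-wise to $\partial_i\D_i\overline{P_1}$, the definition of $\Xi$ turns the identity into
\begin{align*}
\Big\langle\mu_N(\Xi(P_1))\prod_{r=2}^l\delta_N(P_r)\Big\rangle
&=\sum_i\Big\langle(\delta_N\!\otimes\!\delta_N)(\partial_i\D_i\overline{P_1})\prod_{r=2}^l\delta_N(P_r)\Big\rangle \\
&\quad-\sum_i(\alpha_0^V\!\otimes\!\alpha_0^V)(\partial_i\D_i\overline{P_1})\,\Big\langle\prod_{r=2}^l\delta_N(P_r)\Big\rangle \\
&\quad+\sum_{i,r}\tfrac{1}{N^{2}}\Big\langle\ts_N(\D_i\overline{P_1}\D_iP_r)\prod_{r'\ne r}\delta_N(P_{r'})\Big\rangle+(\mathrm{err}).
\end{align*}
To replace $\mu_N(\Xi(P_1))$ by $\delta_N(\Xi(P_1))$ on the left I then need $\alpha_0^V(\lambda,\Xi(P_1))=-\sum_i(\alpha_0^V\!\otimes\!\alpha_0^V)(\partial_i\D_i\overline{P_1})$; this is the limiting Schwinger--Dyson relation for $\alpha_0^V(\lambda,\cdot)$, which I would derive from the \emph{exact} Gaussian integration by parts $\langle\ts_N(X_iQ)\rangle=\langle\ts_N\!\otimes\!\ts_N(\partial_iQ)\rangle-\lambda\langle\ts_N(\D_iV\,Q)\rangle$, Theorem~\ref{maintherorem} at order $n=0$, and the factorisation of Proposition~\ref{osmocsmcdiwnjd}. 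The two $\alpha_0^V\!\otimes\!\alpha_0^V$ contributions then cancel.

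The remaining step is bookkeeping: multiply through by $N^{\widehat l}$ and, in the penultimate sum, split $\ts_N(\D_i\overline{P_1}\D_iP_r)=\alpha_0^V(\lambda,\D_i\overline{P_1}\D_iP_r)+\delta_N(\D_i\overline{P_1}\D_iP_r)$. The deterministic piece recombines into $\alpha_0^V(\lambda,\D_i\overline{P_1}\D_iP_r)\,\nu^N(P_2\otimes\cdots\check{P_r}\cdots\otimes P_l)$ with power of $N$ equal to $\widehat l-2-\widehat{l-2}=0$; the fluctuating piece into $N^{\widehat l-2-\widehat{l-1}}\,\nu^N(\D_i\overline{P_1}\D_iP_r\otimes P_2\otimes\cdots\check{P_r}\cdots\otimes P_l)$; and the bilinear term into $N^{\widehat l-\widehat{l+1}}\,\nu^N(\partial_i\D_i\overline{P_1}\otimes P_2\otimes\cdots\otimes P_l)$. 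Since $\widehat{l-1}=\widehat{l+1}=l=\widehat l$ for even $l$ while $\widehat{l-1}=l-1$, $\widehat{l+1}=l+1$ and $\widehat l=l+1$ for odd $l$, the last two exponents both equal $-2$ in the even case and both equal $0$ in the odd case, which is precisely the difference between the two asserted formulas; and the error, being $N^{\widehat l}$ times a quantity of size $\lesssim N^{-2}\big(\prod_r\norm{P_r}_K\big)\,\mu_{\lambda V,K}^N(\max_j\norm{X_j}\ge M)$, is, for $K$ large, $\lambda$ small and $M$ fixed, at most $N^{\widehat l-2}\big(\prod_r\norm{P_r}_K\big)e^{-\alpha N}$ by Lemma~\ref{vkmslmlskmvlsmv}, hence of the required shape $N^{l-2}e^{-\alpha N}\mathcal E(P_1,\dots,P_l)$ with $|\mathcal E|\le C_l\norm{P_1}_K\cdots\norm{P_l}_K$ (absorbing the harmless extra power of $N$ in the odd case into the exponential, and using $\norm{\D_i\overline{P_1}}_K\le\norm{P_1}_K$).

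I expect the main obstacle to be the first step: carefully redoing the change of variables of Lemma~\ref{ippcut} in the presence of the fluctuation factors $\delta_N(P_r)$ and bookkeeping which differentiated factor carries which power of $N$ — especially the crucial $N^{-2}$ gain coming from differentiating a normalised trace — together with the verification of the identity $\alpha_0^V(\lambda,\Xi(P))=-\sum_i(\alpha_0^V\!\otimes\!\alpha_0^V)(\partial_i\D_i\overline P)$ that produces the cancellation of the $\alpha_0^V\!\otimes\!\alpha_0^V$ terms. Once these are settled, the parity-dependent power counting separating the even and odd cases is routine.
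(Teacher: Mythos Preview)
Your sketch is correct and follows essentially the same route as the paper's proof: rewrite $\mu_N(P_1)$ as $\sum_i\mu_N(X_i\D_i\overline{P_1})$, redo the change-of-variables argument of Lemma~\ref{ippcut} in the presence of the fluctuation factors $\prod_{r\ge2}\delta_N(P_r)$ (which produces the extra $\ts_N(\D_i\overline{P_1}\D_iP_r)$ terms and the $h'$ error), use the limiting Schwinger--Dyson identity $\alpha_0^V\!\otimes\!\alpha_0^V(\partial_i\D_i\overline P)=\alpha_0^V\big((X_i+\lambda\D_iV)\D_i\overline P\big)$ to cancel the $\alpha_0^V\!\otimes\!\alpha_0^V$ contributions, and then do the parity-dependent power counting on $N^{\widehat l}$. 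The one small correction is that, since the whole argument takes place under the cut-off, the Schwinger--Dyson relation for $\alpha_0^V$ should be obtained from Lemma~\ref{ippcut} (taking $N\to\infty$) together with Theorem~\ref{maintherorem2} and Proposition~\ref{osmocsmcdiwnjd}, rather than from exact Gaussian integration by parts and Theorem~\ref{maintherorem}.
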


\begin{proof}
	First let us note that if $Q$ is a monomial,
	$$\deg(Q) \left(\mu_N-\alpha_0^V(\lambda,\cdot)\right)(Q) = \sum_{i=1}^d \left(\mu_N-\alpha_0^V(\lambda,\cdot)\right)\Big(X_i\D_iQ\Big).$$
	Thus thanks to Lemma \ref{ippcut}, if $l$ is even we have that
	\begin{align*}
		&\nu^N(P_1\otimes P_2\otimes \cdots\otimes P_l) \\
		&= \sum_{i=1}^d \nu^N\left(X_i\D_i\overline{P_1}\otimes P_2\otimes\cdots\otimes P_l\right) \\
		&= \sum_{i=1}^d \left(\alpha_0^V(\lambda,\cdot)\otimes\alpha_0^V(\lambda,\cdot)\left(\partial_i\D_i\overline{P_1}\right) - \alpha_0^V\left(\lambda,(X_i+\lambda\D_i V)\D_i\overline{P_1}\right)\right) \nu^N\left(P_2\otimes\cdots\otimes P_l\right) \\
		&\quad - \sum_{i=1}^d \nu^N\left( \lambda \D_i\overline{P_1}\D_iV \otimes P_2\otimes\cdots\otimes P_l\right)  \\
		&\quad + \frac{1}{N^2} \sum_{i=1}^d \nu^N\left( \partial_i\D_i\overline{P_1}\otimes P_2\otimes\cdots\otimes P_l\right)  \\
		&\quad + \sum_{i=1}^d \nu^N\left( \left((\id\otimes\alpha_0^V(\lambda,\cdot) +\alpha_0^V(\lambda,\cdot)\otimes\id)(\partial_i\D_i \overline{P_1})\right)\otimes P_2\otimes\cdots\otimes P_l\right)  \\
		&\quad + N^l\sum_{i=1}^d \E\Bigg[ \bigg(\ts_N\left(\left(\lambda\D_i V(X^N)+X_i^N\right)\D_i\overline{P_1}(X^N)\right) - \ts_N\otimes\ts_N(\partial_i \overline{P_1}(X^N))\bigg) \\
		&\quad\quad\quad\quad\quad\quad\times \left(\mu_N-\alpha_0^V(\lambda,\cdot)\right)^{\otimes l-1}(P_2\otimes\cdots\otimes P_l)\ e^{-\lambda N\tr_N\left(V(X^N)\right)} \1_{\forall i, ||X_i^N||\leq K} \Bigg] \\
		&\quad\quad\quad\quad \times \E\left[ e^{-\lambda N\tr_N\left(V(X^N)\right)} \1_{\forall i, ||X_i^N||\leq K}\right]^{-1}.
	\end{align*}
	But thanks to Lemma \ref{ippcut} combined with Proposition \ref{osmocsmcdiwnjd}, we get that for any $P$,
	$$ \alpha_0^V(\lambda,\cdot)\otimes\alpha_0^V(\lambda,\cdot)\left(\partial_i\D_iP\right) = \alpha_0^V\left(\lambda,(X_i+\lambda\D_i V)\D_iP\right).$$
	Consequently, we have
	\begin{align*}
		&\nu^N((\Xi P_1)\otimes P_2\otimes \cdots\otimes P_l) \\
		&= \frac{1}{N^2} \sum_{i=1}^d \nu^N\left( \partial_i\D_i\overline{P_1}\otimes P_2\otimes\cdots\otimes P_l\right)  \\
		&\quad + N^l\sum_{i=1}^d \E\Bigg[ \left(\ts_N\left(\left(\lambda\D_i V(X^N)+X_i^N\right)\D_i\overline{P_1}(X^N)\right) - \ts_N\otimes\ts_N(\partial_i \overline{P_1}(X^N))\right) \\
		&\quad\quad\quad\quad\quad\quad\times	 \left(\mu_N-\alpha_0^V(\lambda,\cdot)\right)^{\otimes l-1}(P_2\otimes\cdots\otimes P_l) e^{-\lambda N\tr_N\left(V(X^N)\right)} \1_{\forall i, ||X_i^N||\leq K} \Bigg] \\
		&\quad\quad\quad\quad \times \E\left[ e^{-\lambda N\tr_N\left(V(X^N)\right)} \1_{\forall i, ||X_i^N||\leq K}\right]^{-1}.
	\end{align*}
	Thus we want to use Lemma \ref{ippcut} to compute the last term, but we have to handle the extra term $\left(\mu_N-\alpha_0^V(\lambda,\cdot)\right)^{\otimes l-1}(P_2\otimes\cdots\otimes P_l)$. However it is easy to adapt the proof. Indeed, Equation \eqref{lksmcsmssss} becomes
	\begin{align*}
		&\sum_i \E\Bigg[ \left(\partial_i P(X^N)\# H_i - NP(X^N) \tr_N\left((\D_i V(X^N)+X_i^N)H_i\right)\right) \\
		&\quad\quad\quad\times \left(\mu_N-\alpha_0^V(\lambda,\cdot)\right)^{\otimes l-1}(P_2\otimes\cdots\otimes P_l)\ e^{- N\tr_N\left(V(X^N)\right)} \1_{\forall i, ||X_i^N||\leq K} \Bigg] \\
		&=- \sum_i \E\Bigg[ P(X^N) h'(\norm{X_i}_{2p}) \frac{\ts_N(X_i^{2p-1}H_i)}{\ts_N(X_i^{2p})}\norm{X}_{2p} \\
		&\quad\quad\quad\quad\quad\times \left(\mu_N-\alpha_0^V(\lambda,\cdot)\right)^{\otimes l-1}(P_2\otimes\cdots\otimes P_l)\ e^{- N\tr_N\left(V(X^N)\right)} \1_{\forall i, ||X_i^N||\leq K} \Bigg] \\
		&\quad - \sum_{r=2}^l \sum_i \E\Bigg[ P(X^N)\ts_N\left(\D_i P_r(X^N) H_i\right) \left(\mu_N-\alpha_0^V(\lambda,\cdot)\right)^{\otimes l-2}(P_2\otimes\cdots\check{P}_r\cdots\otimes P_l) \\
		&\quad\quad\quad\quad\quad\quad\quad\quad\quad\quad\quad\quad\quad\quad\quad\quad\quad\quad\quad\quad\quad\quad\quad\quad\quad\quad\quad\quad  e^{- N\tr_N\left(V(X^N)\right)} \1_{\forall i, ||X_i^N||\leq K} \Bigg]. \\
	\end{align*}
	Thus if we fix $P=-\D_i \overline{P_1}$, $H=(0,\dots,0,E_{r,s},0,\dots,0)$ with $E_{r,s}$ in the $i$-th position, then after multiplying by $e_r^*$ on the left, $e_s$ on the right, summing over $r,s$, dividing by $N^{2}$ and finally summing over $i$, we get that
	\begin{align*}
		&\sum_i \E\Bigg[ \bigg( \ts_N\left((\D_i V(X^N)+X_i^N)\D_i\overline{P_1}(X^N)\right) - \ts_N\otimes\ts_N\left(\partial_i \D_i \overline{P_1}(X^N)\right) \bigg) \\
		&\quad\quad\quad\times \left(\mu_N-\alpha_0^V(\lambda,\cdot)\right)^{\otimes l-1}(P_2\otimes\cdots\otimes P_l)\ e^{- N\tr_N\left(V(X^N)\right)} \1_{\forall i, ||X_i^N||\leq K} \Bigg] \\
		&= \frac{1}{N^2}\sum_i \E\Bigg[ h'(\norm{X_i}_{2p}) \frac{\ts_N(X_i^{2p-1} \D_i\overline{P_1}	(X^N))}{\ts_N(X_i^{2p})}\norm{X}_{2p} \\
		&\quad\quad\quad\quad\quad\times \left(\mu_N-\alpha_0^V(\lambda,\cdot)\right)^{\otimes l-1}(P_2\otimes\cdots\otimes P_l)\ e^{- N\tr_N\left(V(X^N)\right)} \1_{\forall i, ||X_i^N||\leq K} \Bigg] \\
		&\quad +\frac{1}{N^2} \sum_{r=2}^l \sum_i \E\Bigg[ \ts_N\left(\D_i P_r(X^N) \D_i\overline{P_1}(X^N)\right) \left(\mu_N-\alpha_0^V(\lambda,\cdot)\right)^{\otimes l-2}(P_2\otimes\cdots\check{P}_r\cdots\otimes P_l) \\
		&\quad\quad\quad\quad\quad\quad\quad\quad\quad\quad\quad\quad\quad\quad\quad\quad\quad\quad\quad\quad\quad\quad\quad\quad\quad\quad\quad\quad  e^{- N\tr_N\left(V(X^N)\right)} \1_{\forall i, ||X_i^N||\leq K} \Bigg]. \\
	\end{align*}
	Thus, this yields the following equation,
	\begin{align*}
		&\nu^N((\Xi P_1)\otimes P_2\otimes \cdots\otimes P_l) \\
		&= \sum_{1\leq i\leq d,2\leq r\leq l} \alpha_0^V(\lambda,\D_i\overline{P_1} \D_iP_r)\ \nu^N(P_2\otimes\cdots\check{P_r}\cdots\otimes P_l)  \nonumber\\
		&\quad + \frac{1}{N^2}  \sum_{1\leq i\leq d,2\leq r\leq l} \nu^N(\D_i\overline{P_1} \D_iP_r\otimes P_2\otimes\cdots\check{P_r}\cdots\otimes P_l)  \nonumber\\
		&\quad + \frac{1}{N^2} \sum_i \nu^N\left( \partial_i\D_i\overline{P_1}\otimes P_2\otimes\cdots\otimes P_l\right)  \\
		&\quad + N^{l-2}e^{-\alpha N} \mathcal{E}(P_1,\dots,P_l),
	\end{align*}
	where $\alpha>0$ is a universal constant, and thanks to Lemma \ref{vkmslmlskmvlsmv}, there exists a constant $C_l$ such that
	$$\left| \mathcal{E}(P_1,\dots,P_l) \right| \leq C_l \norm{P_1}_K \dots \norm{P_l}_K.$$
	Note that in this last inequality we used the fact that $\norm{\D_i\overline{P_1}}_K \leq \norm{P_1}_K$. Finally the proof of the case where $l$ is odd is nearly identical with the difference that since $l-1$ is now even, $\widehat{l-1}$ is now equal to $l-1$ instead of $l$, and $\widehat{l}$ to $l+1$ instead of $l$, hence why we do not normalize by $N^2$ in the last two lines of Equation \eqref{dsoicksm}.
\end{proof}

Thanks to this proposition coupled with Lemma 6.2 of \cite{segala}, we can now prove Corollary \ref{sdkjnvkdm}.

\begin{proof}[Proof of Corollary \ref{sdkjnvkdm}]
	First, let us note that thanks to Theorem 3.5 of \cite{guionnet-segala}, one has that
	\begin{equation}
		\label{oidcmosimcscd}
		\alpha_0^V(\lambda,q_0) = \sum_{\mathbf{k}\in \N^m} \frac{(-\lambda \mathbf{t})^{\mathbf{k}}}{\mathbf{k}!} \mathcal{M}_0^{\mathbf{k}}(q_0),
	\end{equation}
	since both of those quantities are the limit when $N$ goes to infinity of
	\begin{equation*}
		\frac{\E\left[\ts_N\left(q_0(X^N)\right) e^{-\lambda N\tr_N\left(V(X^N)\right)}  \1_{\forall i, ||X_i^N||\leq K}\right]}{\E\left[ e^{-\lambda N\tr_N\left(V(X^N)\right)}  \1_{\forall i, ||X_i^N||\leq K}\right]}.
	\end{equation*}
	Then we define by induction, $\nu_1^N=\nu^N$ and for $h\geq 1$,
	$$\nu_{h+1}^N(P_1\otimes\cdots\otimes P_l) = N^2\left(\nu_h^N - \mathcal{I}_{\frac{\widehat{l}}{2}+h-1}(P_1\otimes\cdots\otimes P_l)\right),$$
	where we defined 
	$$ \mathcal{I}_{g}(P_1\otimes\cdots\otimes P_l) = \sum_{\mathbf{k}\in \N^m} \frac{(-\mathbf{t})^{\mathbf{k}}}{\mathbf{k}!} \mathcal{M}^{\mathbf{k}}_g(P_1\otimes\cdots\otimes P_l), $$
	with $\mathcal{M}^{\mathbf{k}}_g(P_1\otimes\cdots\otimes P_l)$ defined as in Definition 4.2 of \cite{segala}. From there on, we can simply follow the proof of Lemma 6.2 of \cite{segala} by using Proposition \ref{dkcsoiknds} and Equation \eqref{oidcmosimcscd} above instead of Property 3.2 of \cite{segala}. This yields that there exist constants $C,\eta,M$ such that if $\max_i |t_i| <\eta$, then for all $N$,
	$$\mathcal{M}^{\mathbf{k}}_g(P_1\otimes\cdots\otimes P_l) \leq C \norm{P_1}_M\dots \norm{P_l}_M.$$
	In particular, if $l=1$, then 
	\begin{equation*}
		\frac{\E\left[\ts_N\left(q_0(X^N)\right) e^{-\lambda N\tr_N\left(V(X^N)\right)}  \1_{\forall i, ||X_i^N||\leq K}\right]}{\E\left[ e^{-\lambda N\tr_N\left(V(X^N)\right)}  \1_{\forall i, ||X_i^N||\leq K}\right]} = \sum_{0\leq g \leq h } \frac{1}{N^{2g}} \sum_{\mathbf{k}\in \N^m} \frac{(-\lambda \mathbf{t})^{\mathbf{k}}}{\mathbf{k}!} \mathcal{M}_g^{\mathbf{k}}(q_0) + \mathcal{O}\left(N^{-2(h+1)}\right).
	\end{equation*}
	Thus thanks to Equation \eqref{kdjncoskd4}, we get that
	$$ \alpha_g(\lambda,q_0) = \sum_{\mathbf{k}\in \N^m} \frac{(-\lambda \mathbf{t})^{\mathbf{k}}}{\mathbf{k}!} \mathcal{M}_g^{\mathbf{k}}(q_0).$$
	Hence the conclusion.
\end{proof}

\section{The case of the GOE}

The aim of this section is to outline how to adapt the results of this paper to the case of GOE random matrices. However we do not aim to provide a complete proof in order to keep the length of the paper reasonable. GOE random matrices are the symmetric equivalent of the GUE random matrices that we have been working with so far. They are defined as follows.

\begin{defi}
	\label{3GOEdef}
	A GOE random matrix $X^N$ of size $N$ is a symmetric matrix whose coefficients are random variables with the following laws:
	\begin{itemize}
		\item For $1\leq i\leq N$, the random variables $\sqrt{N/2} X^N_{i,i}$ are independent centered Gaussian random variables of variance $1$.
		\item For $1\leq i<j\leq N$, the random variables $\sqrt{N}\ X^N_{i,j}$ are independent centered Gaussian random variables of variance $1$, independent of  $\left(X^N_{i,i}\right)_i$.
	\end{itemize}
\end{defi}

Gaussian integration by parts as described in \eqref{3IPPG2} leads to similar equations but with some notable differences which imply that in a theorem equivalent to Theorem \ref{maintherorem}, one would have an expansion in $N^{-1}$ instead of $N^{-2}$. To do so we define the transpose $P^T$ of a polynomial $P\in \A_d$: if $P=X_{i_1}\cdots X_{i_k}$ then we set $P^T=X_{i_k}\cdots X_{i_1}$, and we extend this definition to general polynomials by linearity. We then define $\widetilde{m}: A\otimes B\mapsto A B^T$. One can adapt Equations \eqref{smcosmcds} as such
\begin{align}
	\label{sdicslkmc}
	\E\left[ \ts_N\left(X^N\ Q(X^N)\right) \right] &= \frac{1}{N^{2}} \sum_{r,s} \E\left[ \tr_N\left(E_{r,s}\ \partial Q(X^N) \#( E_{s,r} + E_{r,s})\right) \right] \\
	&= \frac{1}{N^{2}} \sum_{r,s} \E\left[ e_s^* \left(\partial Q(X^N) \# e_s e_r^*\right) e_r \right] + \E\left[ e_s^* \left(\partial Q(X^N) \# e_r e_s^*\right) e_r \right]\nonumber \\
	&= \E\left[ \ts_N^{\otimes 2}\left(\partial Q(X^N)\right) \right] + \frac{1}{N}\E\left[ \ts_N\left(\widetilde{m}\circ\partial Q(X^N)\right) \right], \nonumber
\end{align}
with the notation $\#$ as in Definition \ref{sknclks} and  $(e_u)_{1\leq u\leq  N}$ the canonical basis of $\C^N$. Similarly one would have 
\begin{align*}
	\E\left[ \tr_N\left(X^N\ Q(X^N)\right) \tr_N\left(V(X^N)\right)\right] &= \frac{1}{N}\sum_{r,s} \E\left[ \tr_N\left(E_{r,s}Q(X^N)\right) \tr_{N}\left( \D V(X^N) \#( E_{s,r} + E_{r,s})\right) \right] \nonumber \\
	&= \E\left[ \ts_N\left(Q(X^N) \left(\D V(X^N) + \D V^T(X^N)\right)\right) \right].
\end{align*}
However in this paper we assume that $V$ is trace self-adjoint, i.e. that $X\in\M_N(\C)_{sa}^d\mapsto\tr_N(V(X))$ is real-valued for any $N$, this implies that $\D V(X^N) = \D V^T(X^N)$, hence 
\begin{align*}
	\E\left[ \tr_N\left(X^N\ Q(X^N)\right) \tr_N\left(V(X^N)\right)\right] &= 2 \E\left[ \ts_N\left(Q(X^N) \D V(X^N) \right) \right].
\end{align*}
Consequently, coupled with Equation \eqref{sdicslkmc}, this allows us to adapt Equation \eqref{lkmsdvmdss} to the case of the GOE and it yields a suitable ``master equation'' as in Theorem \ref{mainlemma}. Thus if we define as in the introduction
\begin{equation*}
	d\mu_{\lambda V}^N(X^N) = \frac{1}{Z_{\lambda V}^N} e^{-N\tr_N\left(\lambda V(X^N)+\frac{1}{4}\sum_{i=1}^d(X_i^N)^2\right)} dX_1^N\dots dX_d^N,
\end{equation*}
where the integral $dX_i^N$ is now with respect to the Lebesgue measure on the set of symmetric matrices of size $N$. Then if we set $\Theta : P\in\A_d \mapsto \sum_{i=1}^d \widetilde{m} \circ \partial_i\D_iP\in \A_d$, heuristically one has for any polynomial $P$,
$$ \mu_{\lambda V}^N\left[ \frac{1}{N}\tr_N\left( P(X^N) \right) \right] = \tau\left(\left(\id + 2\lambda \nabla_V - \frac{\Theta}{N} - \frac{L}{N^2} \right)^{-1}(P)(x)\right), $$
where $x$ is a $d$-tuple of free semicircular variables. Once again, in practice, it is unclear whether the operator above is ever invertible when $\lambda\neq 0$. However, using a similar method as in this paper, one can show that the formula above still holds for sufficiently small $\lambda\geq 0$ when you replace the inverse of the operator by a Taylor expansion with respect to $N^{-1}$. This would yield results similar to Theorems~\ref{maintherorem} and \ref{maintherorem2} for GOE random matrices. Note in particular that the first order asymptotic in the case of the GOE is the same as in the case of the GUE where we replaced $\lambda$ by $2\lambda$.

\subsection*{Acknowledgements}

Both of the authors are supported by the Knut and Alice Wallenberg Foundation, as well as by the Swedish Research Council (VR-2021-04703). Besides, we would also like to thank the reviewer for their detailed comments and careful proofreading of this paper.

\bibliographystyle{abbrv}

\end{document}